\documentclass[12pt]{amsart}

\setlength{\textwidth}{15.33cm}

\setlength{\hoffset}{-1.3cm}

\usepackage[latin1]{inputenc}
\usepackage[english]{babel}
\usepackage{amsthm}
\usepackage{amsmath}
\usepackage{amsxtra}
\usepackage{mathrsfs}
\usepackage{amssymb}
\usepackage[dvips]{graphicx}
\usepackage{graphicx}

\input xy
\xyoption{all}

\newcommand{\ext}{{\rm ext}}
\newcommand{\lev}{{\rm Lev}}
\newcommand{\itp}{super tree property }
\newcommand{\tp}{strong tree property }

\newcommand{\PFA}{{\rm PFA }}
\newcommand{\TP}{{\rm TP }}
\newcommand{\ITP}{{\rm ITP }}
\newcommand{\dom}{{\rm dom}}

\newcommand{\ZFC}{{\rm ZFC }}

\newcommand{\GCH}{{\rm GCH }}

\newcommand{\Add}{{\rm Add}}

\newcommand{\force}{\Vdash}
\newcommand{\spazio}{\textrm{ }}
\newcommand{\restr}{\upharpoonright}

\newcommand{\redux}[1]{\langle #1\rangle}

\newtheorem{theorem}{Theorem}[section]
\newtheorem{lemma}[theorem]{Lemma}
\newtheorem{proposition}[theorem]{Proposition}
\newtheorem{coroll}[theorem]{Corollary}
\newtheorem{remark}[theorem]{Remark}
\newtheorem{definition}[theorem]{Definition}

\newtheorem{claim}[theorem]{Claim}
\hyphenation{author another created financial paper re-commend-ed Post-Script}


\begin{document}
\title{Strong Tree Properties For Two Successive Cardinals}


\author[Laura Fontanella ]{Laura Fontanella}
\email{fontanella@logique.jussieu.fr}
\address{Equipe de Logique Math\'ematique,
Universit\'e Paris Diderot Paris 7, UFR de math\'ematiques case 7012, 
site Chevaleret, 75205 Paris Cedex 13, France}



\maketitle

\begin{abstract}{}
An inaccessible cardinal $\kappa$ is supercompact when $(\kappa, \lambda)$-ITP holds for all $\lambda\geq \kappa.$ We prove that if there is a model of $\ZFC$ with two supercompact cardinals, then there is a model of \ZFC where simultaneously $(\aleph_2, \mu)$-ITP and $(\aleph_3, \mu')$-ITP hold, for all $\mu\geq \aleph_2$ and $\mu'\geq \aleph_3.$
   \end{abstract}

\


{\bf Key Words:} tree property, large cardinals, forcing.

{\bf Mathematical Subject Classification:} 03E55.\\

\section{Introduction}

The result presented in this paper concern two combinatorial properties that generalize the usual tree property for a regular cardinal. It is a well known fact that an inaccessible cardinal is weakly compact if, and only if, it satisfies the tree property. A similar characterization was made by Jech \cite{Jech} and Magidor \cite{Magidor} for strongly compact and supercompact cardinals; we will refer to the corresponding combinatorial properties as the \emph{strong tree property} and the \emph{super tree property}. Thus, an inaccessible cardinal is strongly compact if, and only if, it satisfies the strong tree property (see Jech \cite{Jech}), while it is supercompact if, and only if, it satisfies the super tree property (see Magidor \cite{Magidor}).\\    

While the previous results date to the early $1970$s, it was only recently that a systematic study of these properties was undertaken by Weiss (see \cite{WeissPhd} and \cite{Weiss}). Although the strong tree property and the super tree property characterize large cardinals, they can be satisfied by small cardinals as well. Indeed, Weiss proved in \cite{Weiss} that for every $n\geq 2,$ one can define a model of the super tree property for $\aleph_n,$ starting from a model with a supercompact cardinal. Since the super tree property captures the combinatorial essence of supercompact cardinals, then we can say that in Weiss model, $\aleph_n$ is in some sense supercompact.\\ 


By working on the super tree property at $\aleph_2,$ Viale and Weiss (see \cite{VialeWeiss} and \cite{Viale}) obtained new results about the consistency strength of the Proper Forcing Axiom. They proved that if one forces a model of \PFA using a forcing that collapse $\kappa$ to $\omega_2$ and satisfies the $\kappa$-covering and the $\kappa$-approximation properties, then $\kappa$ has to be strongly compact; if the forcing is also proper, then $\kappa$ is supercompact. Since every known forcing producing a model of \PFA by collapsing $\kappa$ to $\omega_2$ satisfies those conditions, we can say that the consistency strength of \PFA is, reasonably, a supercompact cardinal.\\


 

It is natural to ask whether two small cardinals can simultaneously have the strong or the super tree properties. Abraham define in \cite{Abraham} a forcing construction producing a model of the tree property for $\aleph_2$ and $\aleph_3,$  starting from a model of $\ZFC+\GCH$ with a supercompact cardinal and a weakly compact cardinal above it. Cummings and Foreman \cite{CummingsForeman} proved that if there is a model of set theory with infinitely many supercompact cardinals, then one can obtain a model in which every $\aleph_n$ with $n\geq 2$ satisfies the tree property.\\ 

In the present paper, we construct a model of set theory in which $\aleph_2$ and 
$\aleph_3$ simultaneously satisfy the super tree property, starting from a model of $\ZFC$ with two supercompact cardinals $\kappa<\lambda.$ We will collapse $\kappa$ to $\aleph_2$ and $\lambda$ to $\aleph_3,$ in such a way that they will still satisfy the super tree property. The definition of the forcing construction required for that theorem is motivated by Abraham \cite{Abraham} and Cummings-Foreman \cite{CummingsForeman}. We also conjecture that in the model defined by Cummings and Foreman, every $\aleph_n$ (with $n\geq 2$) satisfies the super tree property.\\

The paper is organized as follows. In \S \ref{sstp} we introduce the strong and the super tree properties. In \S \ref{mainforcing}, \S \ref{bricks} and \S \ref{bricks2} we define the forcing notion required for the final theorem and we discuss some properties of that forcing. \S \ref{BPP} is devoted to the proof of two preservation theorems. Finally, the proof of the main theorem is developed in \S \ref{theorem}.

\section{Preliminaries and Notation}

Given a forcing $\mathbb{P}$ and conditions $p,q\in \mathbb{P},$ we use $p\leq q$ in the sense that $p$ is stronger than $q.$ A poset $\mathbb{P}$ is \emph{separative} if whenever 
$q\not\leq p,$ then some extension of $q$ in $\mathbb{P}$ is incompatible with $p.$ Every partial order can be turned into a separative poset. Indeed, one can define $p\prec q$ iff all extensions of $p$ are compatible with $q,$ then the resulting equivalence relation, given by $p\sim q$ iff $p\prec q$ and $q\prec p,$ provides a separative poset; we denote by $[p]$ the equivalence class of $p.$\\ 

Given two forcings $\mathbb{P}$ and $\mathbb{Q},$ we will write 
$\mathbb{P}\equiv \mathbb{Q}$ when $\mathbb{P}$ and $\mathbb{Q}$ are equivalent, namely: 
\begin{enumerate}
\item for every filter $G_{\mathbb{P}}\subseteq \mathbb{P}$ which is $V$-generic over $\mathbb{P},$ there exists a filter $G_{\mathbb{Q}}\subseteq \mathbb{Q}$ which is $V$-generic over $\mathbb{Q}$ and $V[G_{\mathbb{P}}]= V[G_{\mathbb{Q}}] ;$
\item for every filter $G_{\mathbb{Q}}\subseteq \mathbb{Q}$ which is $V$-generic over $\mathbb{Q},$ there exists a filter $G_{\mathbb{P}}\subseteq \mathbb{P}$ which is $V$-generic over $\mathbb{P}$ and $V[G_{\mathbb{P}}]= V[G_{\mathbb{Q}}].$
\end{enumerate}

If $\mathbb{P}$ is any forcing and $\dot{\mathbb{Q}}$ is a $\mathbb{P}$-name for a forcing, then we denote by $\mathbb{P}\ast \dot{\mathbb{Q}}$ the poset 
$\{(p,q);\spazio p\in \mathbb{P}, q\in V^{\mathbb{P}}\textrm{ and }p\force q\in \dot{\mathbb{Q}}\},$
where for every $(p,q), (p',q')\in \mathbb{P}\ast \dot{\mathbb{Q}},$ $(p,q)\leq (p',q')$ if, and only if, $p\leq p'$ and $p\force q\leq q'.$\\

 If $\mathbb{P}$ and $\mathbb{Q}$ are two posets, a \emph{projection} $\pi: \mathbb{Q}\to \mathbb{P}$ is a function such that:
\begin{enumerate}
\item for all $q,q'\in \mathbb{Q},$ if $q\leq q',$ then $\pi(q)\leq \pi(q');$
\item $\pi(1_\mathbb{Q})= 1_{\mathbb{P}};$
\item for all $q\in \mathbb{Q},$ if $p\leq \pi(q),$ then there is $q'\leq q$ such that $\pi(q')\leq p.$ 
\end{enumerate}

If $\pi: \mathbb{Q}\to \mathbb{P}$ is a projection and $G_\mathbb{P}\subseteq \mathbb{P}$ is a $V$-generic filter, define
$$\mathbb{Q}/G_{\mathbb{P}}:=\{ q\in \mathbb{Q};\ \pi(q)\in G_\mathbb{P}\},$$
$\mathbb{Q}/G_\mathbb{P}$ is ordered as a subposet of $\mathbb{Q}.$ The following hold: 
\begin{enumerate}
\item If $G_{\mathbb{Q}}\subseteq \mathbb{Q}$ is a generic filter over  $V$ and $H:= \{p\in \mathbb{P};\ \exists q\in G_{\mathbb{Q}}(\pi(q)\leq p)  \},$ then $H$ is $\mathbb{P}$-generic over $V;$
\item if $\mathbb{G}_{\mathbb{P}}\subseteq \mathbb{P}$ is a generic filter over $V,$ and if $G\subseteq \mathbb{Q}/G_\mathbb{P}$ is a generic filter over $V[\mathbb{G_{\mathbb{P}}}],$ then $G$ is $\mathbb{Q}$-generic over $V,$ and $\pi''[G]$ generates $G_{\mathbb{P}};$
\item if $G_{\mathbb{Q}}\subseteq \mathbb{Q}$ is a generic filter, and $H:= \{p\in \mathbb{P};\ \exists q\in G_{\mathbb{Q}}(\pi(q)\leq p) \},$ then $\mathbb{G}_{\mathbb{Q}}$ is 
$\mathbb{Q}/G_\mathbb{P}$-generic over $V[H].$ That is, we can factor forcing with $\mathbb{Q}$ as forcing with $\mathbb{P}$ followed by forcing with 
$\mathbb{Q}/G_\mathbb{P}$ over $V[\mathbb{G}_{\mathbb{P}}].$
\end{enumerate}

Some of our projections $\pi: \mathbb{Q}\to \mathbb{P}$ will also have the following property: for all $p\leq \pi(q),$ there is $q'\leq q$ such that 
\begin{enumerate}
\item $\pi(q')= p,$  
\item for every $q^*\leq q,$ if $\pi(q^*)\leq p,$ then $q^*\leq q'.$
\end{enumerate}
We denote by $\ext(q,p)$ any condition like $q'$ above (if a condition $q''$ satisfies the previous properties, then $q'\leq q''\leq q'$). In this case, if $G_\mathbb{P}\subseteq \mathbb{P}$ is a generic filter, we can define an ordering on $\mathbb{Q}/ G_{\mathbb{P}}$ as follows: $p\leq^* q$ if, and only if, there is $r\leq \pi(p)$ such that $r\in G_{\mathbb{P}}$ and $\ext(p,r)\leq q.$ Then, forcing over 
$V[G_{\mathbb{P}}]$ with $\mathbb{Q}/G_{\mathbb{P}}$ ordered as a subposet of $\mathbb{Q},$ is equivalent to forcing over $V[G_{\mathbb{P}}]$ with $(\mathbb{Q}/ G_{\mathbb{P}}, \leq^*).$ 

\

Let $\kappa$ be a regular cardinal and $\lambda$ an ordinal, we denote by $\Add(\kappa, \lambda)$ the poset of all partial functions $f:\lambda\to 2$ of size less than $\kappa,$ ordered by reverse inclusion. We use $\Add(\kappa)$ to denote $\Add(\kappa, \kappa).$\\ 


If $V\subseteq W$ are two models of set theory with the same ordinals and $\eta$ is a cardinal in $W,$ we say that $(V,W)$ has the $\eta$-covering property if, and only if, every set $X\subseteq V$ in $W$ of cardinality less than $\eta$ in $W,$ is contained in a set $Y\in V$ of cardinality less than $\eta$ in $V.$\\ 

Assume that $\mathbb{P}$ is a forcing notion, we will use $\redux{\mathbb{P}}$ to denote the canonical $\mathbb{P}$-name for a $\mathbb{P}$-generic filter over $V.$\\

\begin{lemma} (Easton's Lemma) Let $\kappa$ be regular. If $\mathbb{P}$ has the $\kappa$-chain condition and $\mathbb{Q}$ is $\kappa$-closed, then 
\begin{enumerate}
\item $\force_{\mathbb{Q}} \mathbb{P}\textrm{ has the $\kappa$-chain condition};$ 
\item $\force_{\mathbb{P}} \mathbb{Q}\textrm{ is a $<\kappa$-distributive};$
\item If $G$ is $\mathbb{P}$-generic over $V$ and $H$ is $\mathbb{Q}$-generic over $V,$ then $G$ and $H$ are mutually generic;
\item If $G$ is $\mathbb{P}$-generic over $V$ and $H$ is $\mathbb{Q}$-generic over $V,$ then $(V, V[G][H])$ has the $\kappa$-covering property;
\item If $\mathbb{R}$ is $\kappa$-closed, then $\force_{\mathbb{P}\times \mathbb{Q}} \textrm{ $\mathbb{R}$ is $<\kappa$-distributive}.$
\end{enumerate}  
\end{lemma}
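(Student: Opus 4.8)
The plan is to prove the five clauses in the order $(1),(3),(2),(4),(5)$, since the later ones rest on the earlier ones, and to use repeatedly the elementary fact that a $\kappa$-closed forcing adds no new sequence of length $<\kappa$ of ground-model elements (in particular no new subset of size $<\kappa$ of a set in $V$). For clause $(1)$, suppose toward a contradiction that a condition of $\mathbb{Q}$ forces $\dot A=\{\dot a_i : i<\kappa\}$ to be an antichain of $\mathbb{P}$ of size $\kappa$. Working in $V$, I would use the $\kappa$-closure of $\mathbb{Q}$ to build a decreasing sequence $\langle q_i : i<\kappa\rangle$ together with $b_i\in\mathbb{P}$ so that $q_{i+1}\force \dot a_i=\check b_i$, taking lower bounds at limits. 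As distinct $\dot a_i,\dot a_j$ are forced incompatible, the $b_i,b_j$ are genuinely incompatible in $V$ (a common extension would be forced below both), so $\{b_i : i<\kappa\}$ is an antichain of $\mathbb{P}$ of size $\kappa$ in $V$, contradicting the $\kappa$-chain condition.

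For clause $(3)$ I would deduce mutual genericity from $(1)$. By $(1)$, $\mathbb{P}$ is $\kappa$-cc in $V[H]$, so every antichain of $\mathbb{P}$ there has size $<\kappa$; since $\mathbb{Q}$ adds no new $<\kappa$-sized subset of $\mathbb{P}$, every such antichain lies in $V$, and a maximal antichain of $V[H]$ that lies in $V$ is already maximal in $V$ (a $V$-condition incompatible with all of it would witness non-maximality in $V[H]$). Hence $\mathbb{P}$ has exactly the same maximal antichains in $V$ and in $V[H]$, so the $V$-generic $G$ meets all of them and is $\mathbb{P}$-generic over $V[H]$; equivalently $G\times H$ is $\mathbb{P}\times\mathbb{Q}$-generic over $V$.

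Clause $(2)$ is where the real work lies, and I expect it to be the main obstacle. Given $\gamma<\kappa$ and $f\colon\gamma\to\mathrm{Ord}$ in $V[G][H]$, I want $f\in V[G]$. Using $(3)$ I view the extension as $V[H][G]$ with $G$ being $\mathbb{P}$-generic over $V[H]$, where $\mathbb{P}$ is $\kappa$-cc by $(1)$. In $V[H]$ the chain condition provides, for each $\xi<\gamma$, a maximal antichain $A_\xi\subseteq\mathbb{P}$ of size $<\kappa$ and a value function $v_\xi$ with $a\force f(\xi)=v_\xi(a)$ for $a\in A_\xi$. The key point is that the whole array $\langle A_\xi,v_\xi : \xi<\gamma\rangle$ is coded by a single sequence of $V$-elements of length $<\kappa$ (here regularity of $\kappa$ bounds $\sum_{\xi<\gamma}|A_\xi|$ below $\kappa$), so by the $\kappa$-closure of $\mathbb{Q}$ it already lies in $V$. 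Each $A_\xi$ is then a maximal antichain of $\mathbb{P}$ in $V$ by the argument of $(3)$, so in $V[G]$ one recovers $f(\xi)=v_\xi(a)$ for the unique $a\in A_\xi\cap G$, whence $f\in V[G]$. The obstacle is precisely this descent of the deciding data from $V[H]$ to $V$: one must resist the tempting but flawed route of building a decreasing $\mathbb{Q}$-sequence inside $V[G]$, where $\mathbb{Q}$ need no longer be $\kappa$-closed.

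For clause $(4)$, let $X\subseteq V$ with $X\in V[G][H]$ and $|X|<\kappa$. Enumerating $X$ as a short sequence of $V$-elements and applying $(2)$ puts that sequence, hence $X$, in $V[G]$; it then suffices to prove covering for the single $\kappa$-cc step $V\subseteq V[G]$. Taking a name $\dot X$ forced to have size $\leq\delta<\kappa$, each coordinate has fewer than $\kappa$ possible values by the chain condition, and the union $Y$ of all these values over the $\delta$ coordinates has size $<\kappa$ by regularity of $\kappa$, with $X\subseteq Y\in V$. Finally, for clause $(5)$ I would not apply $(2)$ to $\mathbb{P}\times\mathbb{Q}$ directly, as that product need not be $\kappa$-cc; instead I observe that $\mathbb{Q}\times\mathbb{R}$ is $\kappa$-closed and apply $(2)$ to the pair $(\mathbb{P},\mathbb{Q}\times\mathbb{R})$, obtaining that $\mathbb{Q}\times\mathbb{R}$ is $<\kappa$-distributive over $V[G]$. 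Writing the extension of $V[G][H]$ by $\mathbb{R}$ as $V[G][H\times K]$ with $H\times K$ being $(\mathbb{Q}\times\mathbb{R})$-generic over $V[G]$, any new $<\kappa$-sequence of ordinals there already lies in $V[G]\subseteq V[G][H]$, so $\mathbb{R}$ adds none over $V[G][H]$, as required.
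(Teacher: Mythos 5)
The paper gives no proof of this lemma at all---it simply cites \cite[Lemma 2.11]{CummingsForeman}---so there is no internal argument to compare against; your proof is correct, and it is essentially the standard argument found in that reference. In particular your order of deduction is the canonical one---(1) by a decreasing sequence of conditions deciding names and downward absoluteness of incompatibility, (3) from (1) together with the fact that $\kappa$-closed forcing adds no new $<\kappa$-sequences of ground-model sets, (2) by descending the antichain--value array $\langle A_\xi, v_\xi:\xi<\gamma\rangle$ from $V[H]$ to $V$ using regularity of $\kappa$, (4) from (2) plus the $\kappa$-c.c.\ possible-values argument, and (5) by applying (2) to the $\kappa$-closed product $\mathbb{Q}\times\mathbb{R}$ rather than to $\mathbb{P}\times\mathbb{Q}$---and your two cautionary remarks (that $\mathbb{Q}$ need not be $\kappa$-closed in $V[G]$, and that $\mathbb{P}\times\mathbb{Q}$ need not be $\kappa$-c.c.) correctly identify the places where a naive argument would break.
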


For a proof of that lemma see \cite[Lemma 2.11]{CummingsForeman}.\\  

Let $\eta$ be a regular cardinal, $\theta> \eta$ be large enough and $M\prec H_\theta$ of size $\eta$. We say that $M$ is \emph{internally approachable of length $\eta$} if it can be written as the union of an increasing continuous chain  $\langle M_{\xi}: \xi < \eta\rangle $ of elementary submodels of $H(\theta)$ of size less than $\eta,$ such that $\langle M_{\xi}: \xi < \eta' \rangle  \in M_{\eta' +1}$, for every ordinal $\eta' < \eta.$\\

\begin{lemma}($\Delta$-system Lemma) Assume that $\lambda$ is a regular cardinal and $\kappa<\lambda$ is such that $\alpha^{<\kappa}<\lambda,$ for every 
$\alpha<\lambda.$ Let $\mathscr{F}$ be a family of sets of cardinality less than $\kappa$ such that 
$\vert \mathscr{F}\vert= \lambda.$ There exists a family $\mathscr{F}'\subseteq \mathscr{F}$ of size $\lambda$ and a set $R$ such that $X\cap Y= R,$ for any two distinct 
$X,Y\in \mathscr{F}'.$ \end{lemma}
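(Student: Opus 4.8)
The plan is to prove this by two standard reductions followed by a recursion of length $\lambda$, with the arithmetic hypothesis doing all the counting work. First I would reduce to the case where all members of $\mathscr{F}$ share a common cardinality $\rho<\kappa$: the members of $\mathscr{F}$ realise at most $\kappa$ many distinct cardinalities, and since $\kappa<\lambda$ with $\lambda$ regular, one cardinality $\rho$ must be shared by a subfamily of size $\lambda$; replace $\mathscr{F}$ by this subfamily. Next, since $|\bigcup\mathscr{F}|\le\lambda\cdot\kappa=\lambda$, I fix an injection of $\bigcup\mathscr{F}$ into $\lambda$; as injections preserve intersections, I may assume $\mathscr{F}\subseteq[\lambda]^{\rho}$, i.e.\ each $A\in\mathscr{F}$ is a set of ordinals below $\lambda$ of order type $\rho$.

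The heart of the argument is to isolate a bounded \emph{head} common to $\lambda$-many members and to push the remaining \emph{tails} off so as to be pairwise disjoint. The key observation is that for any $\beta<\lambda$ the number of members contained in $\beta$ is at most $|\beta|^{\rho}\le|\beta|^{<\kappa}<\lambda$ by hypothesis; hence no fixed $\beta<\lambda$ can trap $\lambda$-many members. Combining this with the regularity of $\lambda$ (a union of fewer than $\lambda$ sets each of size $<\kappa$ remains of size $<\lambda$, and a supremum of fewer than $\lambda$ ordinals below $\lambda$ stays below $\lambda$), I would build by recursion of length $\lambda$ a subfamily $\langle A_\xi:\xi<\lambda\rangle$ together with an increasing sequence of bounds $\beta_\xi<\lambda$, arranging that the part of each $A_\xi$ lying above $\beta_\xi$ is nonempty and sits strictly above every member selected at an earlier stage.

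Finally I would fix the root. Along the recursion each \emph{low part} $A_\xi\cap\beta_\xi$ is a subset of some $\beta<\lambda$ of size $<\kappa$, and the hypothesis again bounds by $<\lambda$ the number of possible low parts lying below a fixed ordinal; a pigeonhole then extracts a further subfamily of size $\lambda$ on which all low parts agree with a single set $R$, while the high parts remain pairwise disjoint. This subfamily $\mathscr{F}'$ and the set $R$ witness $X\cap Y=R$ for distinct $X,Y\in\mathscr{F}'$. The delicate point, and the main obstacle, is to run the recursion so that the head stabilises to one set $R$ \emph{and} the tails are genuinely disjoint at the same time: a set chosen late can reach back below the current bound and meet an earlier member, so the coordinate bookkeeping — deciding, uniformly on a size-$\lambda$ subfamily, which coordinates of the increasing enumeration of the $A$'s stay bounded and which escape to $\lambda$ — must be handled with care. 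It is precisely the assumption $\alpha^{<\kappa}<\lambda$ for all $\alpha<\lambda$ that makes both the length-$\lambda$ recursion and the final pigeonhole on candidate roots possible.
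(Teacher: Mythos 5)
The paper itself gives no proof of this lemma (it simply cites Kunen), so your sketch has to stand on its own; its first half is the standard reduction and is fine, modulo the minor slip that a set of cardinality $\rho$ need not have order type $\rho$ (one more pigeonhole over the at most $\kappa$ possible order types fixes this). The genuine gap is in the last step, exactly the one you yourself call ``the delicate point'' and leave to ``careful bookkeeping''. Your pigeonhole needs all the low parts to lie below \emph{one fixed} ordinal, but the low parts $A_\xi\cap\beta_\xi$ lie below bounds $\beta_\xi$ that are increasing and cofinal in $\lambda$; the hypothesis $\alpha^{<\kappa}<\lambda$ bounds the number of small subsets of a \emph{fixed} $\alpha<\lambda$ and gives no bound at all on the number of distinct low parts arising along the whole recursion, which can be $\lambda$. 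Moreover this is not just a missing justification: the requirements you impose on the recursion admit runs for which the desired conclusion of the pigeonhole is false. Take $\mathscr{F}=[\lambda]^{2}$ (where the lemma is trivial: $\lambda$ disjoint pairs). At stage $\xi$, with $s_\xi$ the supremum of everything chosen so far, a run may pick $A_\xi=\{s_\xi,\,s_\xi+1\}$ and $\beta_\xi=s_\xi+1$: the bounds increase, the part of $A_\xi$ above $\beta_\xi$ is nonempty and sits above all earlier members, yet the low parts $\{s_\xi\}$ are pairwise distinct, so no subfamily of size $\lambda$ (indeed, no two members) share a low part. The root of a $\Delta$-system simply need not be the ``low part'' relative to your moving bounds.

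What is missing is the device that pins all candidate roots below a single ordinal \emph{before} counting. The standard repair: make the sequence of bounds increasing and \emph{continuous}, so that $C=\{\xi<\lambda:\beta_\xi=\xi\}$ is a club, and restrict to $S=\{\xi\in C:\mathrm{cf}(\xi)\geq\kappa\}$, which is stationary (here regularity of $\kappa$ is used --- a hypothesis present in Kunen's formulation and implicit in yours). For $\xi\in S$ the set $A_\xi\cap\xi$ has size $<\kappa\leq\mathrm{cf}(\xi)$, hence is bounded strictly below $\xi$, so $\xi\mapsto\sup(A_\xi\cap\xi)$ is regressive on $S$; the Pressing Down Lemma (stated in the paper immediately after this lemma) gives a stationary $S'\subseteq S$ and a single $\gamma<\lambda$ with $A_\xi\cap\xi\subseteq\gamma$ for all $\xi\in S'$. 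Only now does your counting apply: there are at most $\vert\gamma\vert^{<\kappa}<\lambda$ possible low parts, so by regularity of $\lambda$ some $R$ occurs for $\lambda$ many $\xi\in S'$, and for $\xi<\xi'$ in this subfamily $A_\xi\subseteq\beta_{\xi'}=\xi'$ yields $A_\xi\cap A_{\xi'}=A_\xi\cap(A_{\xi'}\cap\xi')=A_\xi\cap R=R$. (Kunen's own proof in the cited book achieves the same stabilization without Fodor, by first producing a set $r$ such that for every $\delta<\lambda$ some $X\in\mathscr{F}$ satisfies $X\cap\delta=r$ and $X\not\subseteq\delta$; either way, an extra idea of exactly this kind is indispensable and does not fall out of the recursion-plus-pigeonhole scheme as you describe it.)
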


For a proof of that lemma see \cite{Kunen}.

\begin{lemma}(Pressing Down Lemma) If $f$ is a regressive function on a stationary set $S\subseteq [A]^{<\kappa}$ (i.e. $f(x)\in x,$ for every non empty $x\in S$), then there exists a stationary set $T\subseteq S$ such that $f$ is constant on $T.$
\end{lemma}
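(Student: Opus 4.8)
The plan is to prove this as the generalized Fodor lemma for the club filter on $[A]^{<\kappa}$, arguing by contradiction through a diagonal intersection of clubs. Recall that $C\subseteq [A]^{<\kappa}$ is a \emph{club} if it is cofinal in $([A]^{<\kappa},\subseteq)$ and closed under unions of $\subseteq$-increasing chains of length $<\kappa$, and that $S$ is stationary exactly when it meets every club. I will assume throughout that $\kappa$ is regular and uncountable (as in all the intended applications) and that $A\neq\emptyset$, the statement being vacuous otherwise. Replacing $S$ by $S\setminus\{\emptyset\}$, which is still stationary since $\{\emptyset\}$ is disjoint from the club $\{x:a_0\in x\}$ for a fixed $a_0\in A$, I may assume every $x\in S$ is nonempty, so that $f$ gives a well-defined map $f:S\to A$ with $f(x)\in x$.

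Suppose toward a contradiction that for every $a\in A$ the fiber $S_a:=\{x\in S: f(x)=a\}$ is nonstationary, and fix for each $a\in A$ a club $C_a\subseteq [A]^{<\kappa}$ with $C_a\cap S_a=\emptyset$. I then form the diagonal intersection
\[
C:=\{x\in [A]^{<\kappa}: x\in C_a\ \text{for every}\ a\in x\}.
\]
The crux is the claim that $C$ is again a club; granting it, stationarity of $S$ yields some nonempty $x\in S\cap C$, so $a:=f(x)\in x$, whence $x\in C_a$ because $x\in C$ and $a\in x$, while $f(x)=a$ and $x\in S$ give $x\in S_a$, contradicting $C_a\cap S_a=\emptyset$.

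It remains to show $C$ is a club, and this is the main obstacle. Closure is routine: if $\langle x_\xi:\xi<\gamma\rangle$ is a $\subseteq$-increasing chain in $C$ with $\gamma<\kappa$ and $x=\bigcup_\xi x_\xi$, then $|x|<\kappa$ by regularity, and for any $a\in x$ we have $a\in x_\eta$ for some $\eta$, so the tail $\langle x_\xi:\eta\le\xi<\gamma\rangle$ is an increasing chain in $C_a$ with union $x$, forcing $x\in C_a$; as $a$ was arbitrary, $x\in C$. For unboundedness, fix $z_0\in[A]^{<\kappa}$; using that each $C_a$ is unbounded, choose functions $g_a$ with $g_a(y)\in C_a$ and $y\subseteq g_a(y)$ for all $y$, and build the increasing $\omega$-chain $y_0=z_0$, $y_{n+1}=\bigcup_{a\in y_n} g_a(y_n)$. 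Each $y_n$ has size $<\kappa$ by regularity, and so does $y:=\bigcup_{n<\omega} y_n$ since $\kappa$ is uncountable. If $a\in y$ then $a\in y_m$ for some $m$, and $\langle g_a(y_n):m\le n<\omega\rangle$ is an increasing chain in $C_a$, because $g_a(y_n)\subseteq y_{n+1}\subseteq g_a(y_{n+1})$, whose union is $y$; closure of $C_a$ then gives $y\in C_a$. Hence $y\in C$ and $z_0\subseteq y$, so $C$ is unbounded.

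The auxiliary uses of finite intersections of clubs (to pass to the nonempty part of $S$ and to combine clubs) are routine instances of the $\kappa$-completeness of the club filter, obtained by an analogous but simpler closing-off argument. The only genuinely delicate point is the unboundedness of the diagonal intersection above, where the self-referential demand ``$x\in C_a$ for all $a\in x$'' is satisfied by iterating the witnessing functions $g_a$ countably often and invoking the closure of each $C_a$ along the resulting increasing chain.
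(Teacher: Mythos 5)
Your proof is correct, and it is complete modulo the routine facts you explicitly flag (finite intersections of clubs). There is nothing in the paper to compare it against: the paper does not prove this lemma but simply cites Kunen, and your argument --- establishing normality of the club filter on $[A]^{<\kappa}$ by showing that the diagonal intersection $C=\{x\in[A]^{<\kappa}:\ x\in C_a\ \text{for all}\ a\in x\}$ is a club (closure via tail chains, unboundedness via the $\omega$-step closing-off with the witnessing functions $g_a$), then intersecting with $S$ --- is precisely the standard proof found in the cited reference. The one caveat is your standing assumption that $\kappa$ is regular and uncountable; the paper's statement leaves this implicit, but it holds in every application made of the lemma (e.g.\ in the Second Preservation Theorem, where $\kappa=\aleph_{n+1}$), so nothing is lost.
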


For a proof of that lemma see \cite{Kunen}.\\ 


We will assume familiarity with the theory of large cardinals and elementary embeddings, as developed for example in \cite{Kanamori}. 

\begin{lemma} (Laver) \cite{Laver} If $\kappa$ is a supercompact cardinal, then there exists $L: \kappa \to V_{\kappa}$ such that: for all $\lambda,$ for all $x\in H_{\lambda^+},$ there is $j: V\to M$
such that $j(\kappa)>\lambda,$ ${}^\lambda M\subseteq M$ and $j(L)(\kappa)= x.$
\end{lemma}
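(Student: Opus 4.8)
The plan is to build $L$ by transfinite recursion, recording at each stage the least obstruction to $L$ being a Laver function so far, and then to reach a contradiction by reflecting a hypothetical minimal obstruction through a supercompactness embedding. Throughout I would phrase $\lambda$-supercompactness via normal fine ultrafilters on $P_\kappa(\lambda)$ rather than through proper-class embeddings: these ultrafilters are \emph{sets}, so one may quantify over them and reflect statements about them, and for $g\colon\kappa\to V_\kappa$ the value $j_U(g)(\kappa)$ depends only on $g$ and $U$ and is absolute between any two inner models that contain $U$ and $g$ and are sufficiently closed. I would also use the standard equivalence that, for $x\in H_{\mu^+}$, there is an embedding $j$ with $j(\alpha)>\mu$ and ${}^\mu M\subseteq M$ realising $j(g)(\alpha)=x$ if and only if some normal fine ultrafilter on $P_\alpha(\mu)$ does.

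First I would fix a well-ordering $<^*$ of $V_\kappa$ and define $L\colon\kappa\to V_\kappa$ by recursion. Given $L\restr\alpha$, I consider the pairs $(\mu,x)$ with $x\in V_\kappa\cap H_{\mu^+}$ for which there is no normal fine ultrafilter $U$ on $P_\alpha(\mu)$ with $j_U(L\restr\alpha)(\alpha)=x$; if such pairs exist I take the one with $\mu$ least and then $x$ which is $<^*$-least, and set $L(\alpha)=x$; otherwise $L(\alpha)=\emptyset$. The restriction $x\in V_\kappa$ is what keeps $L$ into $V_\kappa$, and it is crucial that this restriction is expressed using $\kappa$ as a parameter: when the definition is transported by an embedding $j$ with critical point $\kappa$, the clause ``$x\in V_\kappa$'' becomes ``$x\in V_{j(\kappa)}$'', so the copied recursion, evaluated at the argument $\kappa<j(\kappa)$, is free to search for obstructions at \emph{all} levels $\mu<j(\kappa)$, in particular at levels $\mu\geq\kappa$. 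This is exactly what will allow the copied recursion to capture obstructions $(\lambda,x)$ with $\lambda$ arbitrarily large.

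I would then argue by contradiction. If $L$ is not a Laver function, let $(\lambda,x)$ be the obstruction with $\lambda$ least and $x$ then $<^*$-least: thus $x\in H_{\lambda^+}$ and no $j\colon V\to M$ with $j(\kappa)>\lambda$ and ${}^\lambda M\subseteq M$ satisfies $j(L)(\kappa)=x$. Choose a normal fine ultrafilter $W$ on $P_\kappa(\gamma)$ for a cardinal $\gamma\geq\lambda$ large enough that ${}^\gamma M_W\subseteq M_W$ and $\mathscr{P}(P_\kappa(\lambda))\subseteq M_W$ (for instance $\gamma=2^{\lambda^{<\kappa}}$). Since $\mathrm{crit}(j_W)=\kappa$ we have $j_W(L)\restr\kappa=L$, so $j_W(L)(\kappa)$ is computed inside $M_W$ by the copied recursion at the argument $\kappa$. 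Using the closure of $M_W$, the model $M_W$ has exactly the same normal fine ultrafilters on $P_\kappa(\mu)$ as $V$ for every $\mu\leq\lambda$ and agrees with $V$ on all the relevant values $j_U(L)(\kappa)$; combined with the minimality of $(\lambda,x)$ this forces $M_W$ to recognise $(\lambda,x)$ as precisely the least obstruction at $\kappa$, whence $j_W(L)(\kappa)=x$.

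The contradiction is then immediate: $j_W$ has $j_W(\kappa)>\gamma\geq\lambda$, ${}^\lambda M_W\subseteq M_W$ and $j_W(L)(\kappa)=x$, so it is exactly a witness of the kind $(\lambda,x)$ was assumed to lack. The heart of the argument, and the step I expect to be most delicate, is this reflection: one must guarantee that $M_W$ evaluates the recursion at $\kappa$ exactly as $V$ would, both as to which ultrafilters exist at each level $\mu\leq\lambda$ and as to the values $j_U(L)(\kappa)$, and this is what dictates taking the closure $\gamma$ of $W$ well above $\lambda$. The parameter device of the second paragraph is the companion subtlety: it is what reconciles the demand that $L$ take values in $V_\kappa$ with the need to capture obstructions $(\lambda,x)$ for arbitrarily large $\lambda$.
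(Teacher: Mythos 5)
The paper does not actually prove this lemma: it is Laver's theorem, quoted with a citation to Laver's 1978 paper, so there is no internal proof to compare against. What you propose is essentially the classical argument of Laver (as presented, e.g., in \cite{Kanamori}): define $L$ by recursion on $\alpha<\kappa$, recording at each stage a least unanticipated pair; phrase anticipation via normal fine ultrafilters on $P_\alpha(\mu)$ so that everything is quantification over sets and reflects; then derive a contradiction from a minimal counterexample $(\lambda,x)$ by taking a measure $W$ on $P_\kappa(\gamma)$ with $\gamma\geq 2^{\lambda^{<\kappa}}$ and computing $j_W(L)(\kappa)$ inside $M_W$. Your two supporting claims --- the equivalence between anticipation by a $\mu$-closed embedding and anticipation by a normal fine ultrafilter on $P_\kappa(\mu)$ (via the factor map, which is the identity on $H_{\mu^+}$), and the absoluteness of $j_U(g)(\kappa)$ between $V$ and $M_W$ (because $M_W$ contains $U$ and every function from $P_\kappa(\mu)$ to $V_\kappa$) --- are both correct with your choice of $\gamma$, and they are exactly the points where the closure of $M_W$ is needed. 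The device of restricting the recursion to witnesses in $V_\kappa$, so that $j$ relaxes this to $V_{j(\kappa)}$, is also the standard and correct way to keep $L$ mapping into $V_\kappa$.

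One slip should be repaired: in the contradiction step you take ``the obstruction $(\lambda,x)$ with $\lambda$ least and $x$ then $<^*$-least,'' but $<^*$ is a well-ordering of $V_\kappa$ only, while the obstruction $x$ lives in $H_{\lambda^+}$ with $\lambda$ typically $\geq\kappa$; so ``the $<^*$-least obstruction'' is undefined there, and consequently the assertion $j_W(L)(\kappa)=x$ is not justified (the copied recursion in $M_W$ selects its witness using $j_W(<^*)$, and you cannot identify that selection with a pre-chosen $x$). The repair is one line, because you never need $j_W(L)(\kappa)$ to equal $x$: your level-by-level agreement between $V$ and $M_W$ already shows that $\lambda$ is the least obstruction level in $M_W$ and that the obstructions at level $\lambda$ are the same in both models; hence $y:=j_W(L)(\kappa)$ is itself an obstruction at level $\lambda$ in $V$, while $j_W$ satisfies $j_W(\kappa)>\gamma\geq\lambda$, ${}^\lambda M_W\subseteq M_W$ and $j_W(L)(\kappa)=y$, so $j_W$ anticipates $y$ --- the desired contradiction. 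With that adjustment the argument is complete.
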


\begin{lemma} (Silver) Let $j: M\to N$ be an elementary embedding between inner models of {\rm ZFC}. Let $\mathbb{P}\in M$ be a forcing and suppose that $G$ is $\mathbb{P}$-generic over $M,$ $H$ is $j(\mathbb{P})$-generic over $N,$ and $j''[G]\subseteq H.$ Then, there is a unique $j^*: M[G]\to N[H]$ such that $j^*\restr M= j$ and $j^*(G)= H.$ 
\end{lemma}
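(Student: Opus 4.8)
The plan is to define $j^*$ directly on names and verify the required properties via the forcing theorem inside the ground models. Since $M$ and $N$ are inner models of ZFC, the forcing relation and the evaluation $\tau\mapsto \tau_G$ of a $\mathbb{P}$-name $\tau$ by a generic filter $G$ are available inside each of them, and every element of $M[G]$ has the form $\tau_G$ for some $\mathbb{P}$-name $\tau\in M$. I would therefore set
$$j^*(\tau_G) := (j(\tau))_H$$
for each such name $\tau$, and then argue that this prescription is well-defined, elementary, and in fact forced upon us.

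The crux is a single computation combining the forcing theorem with the hypothesis $j''[G]\subseteq H$. For any formula $\varphi$ and names $\tau_1,\dots,\tau_n\in M$, the forcing theorem in $M$ gives that $M[G]\models \varphi((\tau_1)_G,\dots,(\tau_n)_G)$ holds if, and only if, some $p\in G$ satisfies $p\force_{\mathbb{P}}^{M}\varphi(\tau_1,\dots,\tau_n)$. Applying elementarity of $j$, this is equivalent to $j(p)\force_{j(\mathbb{P})}^{N}\varphi(j(\tau_1),\dots,j(\tau_n))$; and since $p\in G$ yields $j(p)\in H$ by the hypothesis $j''[G]\subseteq H$, the forcing theorem in $N$ turns this into $N[H]\models \varphi((j(\tau_1))_H,\dots,(j(\tau_n))_H)$. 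Taking $\varphi$ to be the equality $x_1=x_2$ yields well-definedness and injectivity, taking it to be $x_1\in x_2$ shows membership is preserved, and for an arbitrary $\varphi$ the same chain of equivalences is precisely the elementarity of $j^*$.

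It then remains to identify $j^*$ on $M$ and on $G$. For $a\in M$, writing $a=(\check a)_G$ for the canonical check-name, elementarity gives that $j(\check a)$ is the canonical check-name for $j(a)$ in $N$, whence $j^*(a)=(j(\check a))_H=j(a)$, so $j^*\restr M=j$. Similarly $G=(\redux{\mathbb{P}})_G$, and by elementarity $j(\redux{\mathbb{P}})$ is the canonical $j(\mathbb{P})$-name for the generic filter over $N$, so $j^*(G)=(j(\redux{\mathbb{P}}))_H=H$. Uniqueness is then immediate: any map satisfying the two requirements and respecting the elementary structure must send $\tau_G$, which $M[G]$ computes from the parameters $\tau\in M$ and $G$, to the object $N[H]$ computes from $j(\tau)$ and $H$, namely $(j(\tau))_H$; hence it coincides with $j^*$. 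The only point demanding care is the first computation, where one must apply the forcing theorem inside the correct model and use $j''[G]\subseteq H$ to pass from $p\in G$ to $j(p)\in H$; everything else is bookkeeping.
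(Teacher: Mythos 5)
Your proof is correct and follows essentially the same route as the paper, which simply asserts that the map $j^*(\dot{x}^{G})= j(\dot{x})^{H}$ is well defined and has the required properties; you have supplied the standard verification (forcing theorem in $M$, elementarity of $j$, the hypothesis $j''[G]\subseteq H$, then the forcing theorem in $N$) that the paper leaves implicit. The only stylistic caveat is that your chain of ``equivalences'' is, strictly speaking, a one-directional implication for each formula, with the converse obtained by running the same argument on $\neg\varphi$ --- but that is bookkeeping, not a gap.
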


\begin{proof} If $j''[G]\subseteq H,$ then the map $j^*(\dot{x}^{G})= j(\dot{x})^{H}$ is well defined and satisfies the required properties. \end{proof}

\section{The Strong and the Super Tree Properties}\label{sstp}

In order to define the \tp and the \itp for a regular cardinal $\kappa\geq \aleph_2,$ we need to define the notion of \emph{$(\kappa, \lambda)$-tree}, for an ordinal $\lambda\geq \kappa.$

\begin{definition}\label{main definition} Given $\kappa\geq \omega_2$ a regular cardinal and $\lambda\geq \kappa,$ a \emph{$(\kappa, \lambda)$-tree} is a set $F$ satisfying the following properties:  
\begin{enumerate}
\item for every $f\in F,$ $f: X\to 2,$ for some $X\in [\lambda]^{<\kappa}$
\item for all $f\in F,$ if $X\subseteq \dom(f),$ then $f\restr X\in F;$
\item the set $\lev_X(F):= \{f\in F;\spazio \dom(f)=X \}$ is non empty, for all $X\in [\lambda]^{<\kappa};$
\item $\vert \lev_X(F) \vert<\kappa ,$ for all $X\in [\lambda]^{<\kappa}.$
\end{enumerate}
\end{definition}

When there is no ambiguity, we will simply write $\lev_X$ instead of $\lev_X(F).$ 

\begin{definition}\label{branches} Given $\kappa\geq \omega_2$ a regular cardinal, $\lambda\geq \kappa,$ and $F$ a $(\kappa, \lambda)$-tree,
\begin{enumerate}
\item  a \emph{cofinal branch} for $F$ is a function $b: \lambda \to 2$ such that $b\restr X\in \lev_X(F),$ for all $X\in[\lambda]^{<\kappa};$
\item an \emph{$F$-level sequence} is a function $D: [\lambda]^{<\kappa}\to F$ such that for every $X\in [\lambda]^{<\kappa},$ $D(X)\in \lev_X(F);$
\item given an $F$-level sequence $D,$ an \emph{ineffable branch} for $D$ is a cofinal branch $b: \lambda \to 2$ such that
$\{X\in [\lambda]^{<\kappa};\spazio b\restr X= D(X) \}$ is stationary. 
\end{enumerate}
\end{definition}

\begin{definition} Given $\kappa\geq \omega_2$ a regular cardinal and $\lambda\geq \kappa,$ 
\begin{enumerate}
\item $(\kappa, \lambda)$-\TP holds if every $(\kappa, \lambda)$-tree has a cofinal branch;
\item $(\kappa, \lambda)$-\ITP holds if for every $(\kappa, \lambda)$-tree $F$ and for every $F$-level sequence $D,$ there is an an ineffable branch for $D;$
\item we say that $\kappa$ satisfies the \emph{strong tree property} if $(\kappa, \mu)$-\TP holds, for all $\mu\geq \kappa;$
\item we say that $\kappa$ satisfies the \emph{super tree property} if 
$(\kappa,\mu)$-\ITP holds, for all $\mu\geq \kappa;$
\end{enumerate}
\end{definition}


\section{The Preservation Theorems}\label{BPP}

It will be important in what follows that certain forcings cannot add ineffable branches. 

\begin{theorem}\label{closed}(First Preservation Theorem) Let $\theta$ be a regular cardinal and $\mu\geq \theta$ be any ordinal. Assume that $F$ is a $(\theta, \mu)$-tree and $\mathbb{Q}$ is an $\eta^+$-closed forcing with $\eta< \theta\leq 2^{\eta}.$ For every filter $G_{\mathbb{Q}}\subseteq \mathbb{Q}$ generic over $V,$ every cofinal branch for $F$ in $V[G_{\mathbb{Q}}]$ is already in $V.$
\end{theorem}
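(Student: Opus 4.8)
The plan is to argue by contradiction via a splitting construction. Suppose that some $q_0\in\mathbb{Q}$ forces that $\dot b$ is a cofinal branch for $F$ with $\dot b\notin V$. Since $2^\eta\geq\theta$, let $\xi^*$ be the least ordinal $\xi\leq\eta$ with $2^\xi\geq\theta$; then $2^\xi<\theta$ for every $\xi<\xi^*$. First I would build a binary tree of conditions $\langle q_s: s\in {}^{\leq\xi^*}2\rangle$ below $q_0$, together with a family of coordinates $\langle\gamma_s: s\in {}^{<\xi^*}2\rangle$ in $\mu$, keeping $q_t\leq q_s$ whenever $s\subseteq t$.

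For the successor step, given $q_s$ I use that $q_s\leq q_0$ forces $\dot b\notin V$: were $q_s$ to decide $\dot b(\gamma)$ for every $\gamma<\mu$, the resulting function would lie in $V$ and be forced to equal $\dot b$, a contradiction. Hence there is a coordinate $\gamma_s<\mu$ not decided by $q_s$, and I may pick $q_{s^\frown 0},q_{s^\frown 1}\leq q_s$ forcing $\dot b(\gamma_s)=0$ and $\dot b(\gamma_s)=1$ respectively. At a limit stage $\xi\leq\xi^*$ and for $x\in {}^{\xi}2$, the sequence $\langle q_{x\restr\zeta}:\zeta<\xi\rangle$ is decreasing of length $\xi\leq\eta$, so the $\eta^+$-closure of $\mathbb{Q}$ furnishes a lower bound $q_x$; this is the only use of closure.

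Now set $X:=\{\gamma_s: s\in {}^{<\xi^*}2\}$. The delicate point is the size of $X$. There are $\sum_{\xi<\xi^*}2^\xi$ nodes, and since each $2^\xi<\theta$ for $\xi<\xi^*$ while $\xi^*\leq\eta<\theta$, the regularity of $\theta$ gives $|X|<\theta$, so that $X\in[\mu]^{<\theta}$ and $\lev_X(F)$ has size less than $\theta$. By construction, if $x\neq x'$ in ${}^{\xi^*}2$ split at $s=x\restr\xi=x'\restr\xi$, then $q_x\leq q_{s^\frown x(\xi)}$ and $q_{x'}\leq q_{s^\frown x'(\xi)}$ force opposite values of $\dot b(\gamma_s)$, with $\gamma_s\in X$.

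For the contradiction, I would observe that since $q_0\force \dot b\restr X\in\lev_X(F)$ and $\lev_X(F)$ is a set of $V$ of size less than $\theta$, the conditions forcing $\dot b\restr X$ equal to a specific element of $\lev_X(F)$ are dense below $q_0$; this is what lets me avoid deciding $\dot b\restr X$ coordinate by coordinate, which the available closure could not support. Thus each $q_x$ has an extension $q_x^+$ with $q_x^+\force \dot b\restr X= g_x$ for some $g_x\in\lev_X(F)$, and the splitting at $\gamma_s$ forces $g_x\neq g_{x'}$ whenever $x\neq x'$. This produces an injection of ${}^{\xi^*}2$, of size $2^{\xi^*}\geq\theta$, into $\lev_X(F)$, contradicting $|\lev_X(F)|<\theta$. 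The main obstacle I expect is exactly the control $|X|<\theta$: it is secured by stopping the tree at the first width $\xi^*$ reaching $\theta$ and invoking the regularity of $\theta$, rather than by running the full tree of height $\eta$.
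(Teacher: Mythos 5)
Your proof is correct and follows essentially the same splitting-tree argument as the paper: take the least exponent $\xi^*$ with $2^{\xi^*}\geq\theta$, build a binary tree of conditions forcing divergent values of $\dot b$ at the splitting coordinates, use $\eta^+$-closure at limits, and derive a counting contradiction against $\vert \lev_X(F)\vert<\theta$ using the regularity of $\theta$. The only (harmless) difference is that you decide $\dot b\restr X$ once at the very end, by density into the ground-model level set, whereas the paper strengthens conditions to decide $\dot b\restr a_\alpha$ stage by stage during the recursion.
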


\begin{proof} We can assume, without loss of generality, that $\eta$ is minimal such that $2^{\eta}\geq \theta.$ Assume towards a contradiction that $\mathbb{Q}$ adds a cofinal branch to $F,$ let $\dot{b}$ be a $\mathbb{Q}$-name for such a function.  
For all $\alpha\leq \eta$ and all $s\in {}^{\alpha}2,$ we are going to define by induction three objects $a_{\alpha}\in [\mu]^{<\theta},$ $f_s\in \lev_{a_{\alpha}}$ and $p_s\in \mathbb{Q}$ such that:

\begin{enumerate}
\item $p_s\force \dot{b}\restr a_{\alpha}= f_s;$ 
\item $f_{s\smallfrown 0}(\beta)\neq f_{s\smallfrown 1}(\beta),$ for some $\beta<\mu;$
\item if $s\subseteq t,$ then $p_t\leq p_s;$
\item if $\alpha< \beta,$ then $a_{\alpha}\subset a_{\beta}.$
\end{enumerate}

Let $\alpha<\eta,$ assume that $a_{\alpha}, f_s$ and $p_s$ have been defined for all $s\in {}^{\alpha}2.$ We define $a_{\alpha+1},$ $f_s,$ and $p_s,$ for all $s\in {}^{\alpha+1}2.$ Let $t$ be in ${}^{\alpha}2,$ we can find an ordinal $\beta_t\in \mu$ and two conditions $p_{t\smallfrown 0}, p_{t\smallfrown 1}\leq p_t$ such that $p_{t\smallfrown 0}\force \dot{b}(\beta_t)= 0$ and $p_{t\smallfrown 1}\force \dot{b}(\beta_t)= 1.$ (otherwise, $\dot{b}$ would be a name for a cofinal branch which is already in $V$).  Let 
$a_{\alpha+1}:=a_{\alpha}\cup \{\beta_t;\ t\in {}^{\alpha}2\},$ then $\vert a_{\alpha+1}\vert <\theta,$ because $2^{\alpha}<\theta$. We just defined, for every $s\in {}^{\alpha+1}2,$ a condition $p_s.$ Now, by strengthening $p_s$ if necessary, we can find $f_s\in \lev_{a_{\alpha+1}}$ such that 
$$p_s\force \dot{b}\restr a_{\alpha+1}= f_s.$$
Finally, $f_{t\smallfrown 0}(\beta_t)\neq f_{t\smallfrown 1}(\beta_t),$ for all $t\in {}^{\alpha}2:$ because $p_{t\smallfrown 0}\force f_{t\smallfrown 0}(\beta_t)= \dot{b}(\beta_t)=0,$ while $p_{t\smallfrown 1}\force f_{t\smallfrown 1}(\beta_t)= \dot{b}(\beta_t)=1.$\\

If $\alpha$ is a limit ordinal $\leq \eta,$ let $t$ be any function in ${}^{\alpha}2.$ Since $\mathbb{Q}$ is $\eta^+$-closed, there is a condition $p_t$ such that $p_t\leq p_{t\restr {\beta}},$ for all $\beta<\alpha.$ Define $a_{\alpha}:= \underset{\beta<\alpha}{\bigcup} a_{\beta}.$ By strengthening $p_t$ if necessary, we can find $f_t\in \lev_{a_{\alpha}}$ such that 
$p_t\force \dot{b}\restr a_{\alpha}=f_t.$ That completes the construction. \\

We show that $\vert \lev_{a_{\eta}}\vert\geq {}^{\eta}2\geq \theta,$ thus a contradiction is obtained. Let $s\neq t$ be two functions in ${}^{\eta}2,$ we are going to prove that 
$f_s\neq f_t.$ Let $\alpha$ be the minimum ordinal less than $\eta$ such that $s(\alpha)\neq t(\alpha),$ without loss of generality $r\smallfrown 0\sqsubset s$ and $r\smallfrown 1\sqsubset t,$ for some $r\in {}^{\alpha}2.$ By construction, 
$$p_s\leq p_{r\smallfrown 0}\force \dot{b}\restr a_{\alpha+1}= f_{r\smallfrown 0}\textrm{ and }p_t\leq p_{r\smallfrown 1}\force \dot{b}\restr a_{\alpha+1}= f_{r\smallfrown 1},$$ where 
$f_{r\smallfrown 0}(\beta)\neq f_{r\smallfrown 1}(\beta),$ for some $\beta.$ Moreover, $p_s\force \dot{b}\restr a_{\eta}=f_s$ and $p_t\force \dot{b}\restr a_{\eta}= f_t,$ hence 
$f_s\restr a_{\alpha+1}(\beta)= f_{r\smallfrown 0}(\beta)\neq f_{r\smallfrown 1}(\beta)= f_t\restr a_{\alpha+1}(\beta),$ thus $f_s\neq f_t.$ That completes the proof. \end{proof}

\begin{coroll}\label{primo} Let $\theta$ be a regular cardinal and $\mu\geq \theta$ be any ordinal. Assume that $F$ is a $(\theta, \mu)$-tree and $D$ is an $F$-level sequence, and suppose that 
$\mathbb{Q}$ is an $\eta^+$-closed forcing with $\eta<\theta\leq 2^{\eta}.$ For every filter $G_{\mathbb{Q}}\subseteq \mathbb{Q}$ generic over $V,$ if $D$ has no ineffable branches in $V,$ then there are no ineffable branches for $D$ in $V[G_{\mathbb{Q}}].$
\end{coroll}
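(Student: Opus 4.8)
The plan is to reduce the corollary directly to the First Preservation Theorem. The key observation is that an ineffable branch for $D$ is in particular a cofinal branch for $F$, so the forcing $\mathbb{Q}$ cannot introduce any new candidate branches at all. Thus I would argue that the set of cofinal branches of $F$ is the same in $V$ and in $V[G_{\mathbb{Q}}]$, and then check that the defining property of being an ineffable branch is preserved downward to $V$.

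First I would fix a generic filter $G_{\mathbb{Q}} \subseteq \mathbb{Q}$ over $V$ and suppose, towards a contradiction, that $b \in V[G_{\mathbb{Q}}]$ is an ineffable branch for $D$. By Definition \ref{branches}, $b$ is a cofinal branch for $F$ and the set
$$S := \{X \in [\mu]^{<\theta} : b\restr X = D(X)\}$$
is stationary in $V[G_{\mathbb{Q}}]$. Since $\mathbb{Q}$ is $\eta^+$-closed with $\eta < \theta \leq 2^{\eta}$, Theorem \ref{closed} applies to the $(\theta,\mu)$-tree $F$, so the cofinal branch $b$ must already lie in $V$.

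The main point to verify is that $b$ witnesses an ineffable branch for $D$ already in $V$, i.e.\ that $S$ is stationary in the ground model. Here I would note that $S$ is definable in $V$ from the parameters $b$, $D$, and $F$, all of which are in $V$; hence $S \in V$. It remains to see that $S$ is stationary in $V$. This follows because stationarity of a subset of $[\mu]^{<\theta}$ is downward absolute: if $S$ were non-stationary in $V$, there would be a club $C \in V$ disjoint from $S$, and $C$ remains a club in $V[G_{\mathbb{Q}}]$ (closure under $\theta$-sized subsets and cofinality are preserved, as $\mathbb{Q}$ adds no new small subsets by $\eta^+$-closedness combined with the structure of $[\mu]^{<\theta}$), contradicting the stationarity of $S$ in $V[G_{\mathbb{Q}}]$. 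Therefore $S$ is stationary in $V$, so $b$ is an ineffable branch for $D$ in $V$, contradicting the hypothesis that $D$ has no ineffable branches in $V$.

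The step I expect to require the most care is the downward absoluteness of stationarity, since a priori a club in $V[G_{\mathbb{Q}}]$ need not come from $V$; the argument instead goes the other way, lifting a ground-model club witnessing non-stationarity up to the extension. The closure of $\mathbb{Q}$ is what guarantees that a ground-model club stays club in the extension, and this is precisely where the hypotheses feeding Theorem \ref{closed} are reused.
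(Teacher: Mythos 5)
Your proposal follows the paper's proof exactly in outline: apply Theorem \ref{closed} to conclude $b\in V$, note that the witnessing set $S=\{X\in[\mu]^{<\theta}:\ b\restr X=D(X)\}$ lies in $V$, and finish by downward absoluteness of stationarity. The paper invokes that last step as a known fact without proof. The gap is in your justification of it: you argue that a club $C\in V$ disjoint from $S$ ``remains a club in $V[G_{\mathbb{Q}}]$'' because $\eta^+$-closedness prevents new small sets. This is false under the stated hypotheses. Being $\eta^+$-closed only forbids new sequences of length $\leq\eta$, and the hypotheses allow $\eta^+<\theta$ (indeed $\theta\leq 2^{\eta}$ is there precisely because the interesting case is $\eta^+<\theta$); so $\mathbb{Q}$ may well add new elements of $[\mu]^{<\theta}$. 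Worse, in the paper's own application of this corollary (proof of part (2) of the main theorem, with $\eta=\aleph_1$ and $\theta=\lambda=\aleph_3$), the forcing $\mathbb{Q}^*$ \emph{collapses} $\theta$ to have size $\aleph_2$. After that collapse, $\theta$ itself (as a subset of $\mu$, since $\mu\geq\theta$) is an element of $([\mu]^{<\theta})^{V[G_{\mathbb{Q}}]}$ which is not contained in any ground-model set of $V$-size $<\theta$; hence no ground-model club is even unbounded in the extension, let alone closed. So your argument would fail exactly where the corollary is needed.

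The fact you need is nevertheless true, but the correct argument avoids preserving clubs and instead uses the characterization of the club filter on $[\mu]^{<\theta}$ by closure points of functions: every club $C\in V$ contains, for some $F\in V$ with $F:[\mu]^{<\omega}\to[\mu]^{<\theta}$, the set of all $X$ closed under $F$ (i.e., $F(e)\subseteq X$ for all $e\in[X]^{<\omega}$). Now work in $V[G_{\mathbb{Q}}]$: the collection of $F$-closed sets computed there is a club, so the set $S$, being stationary in $V[G_{\mathbb{Q}}]$, contains some $F$-closed $X$. Since $X\in S\subseteq V$ and $F\in V$, and ``$X$ is closed under $F$'' is absolute between the two models, this $X$ lies in $C$ as computed in $V$; hence $S\cap C\neq\emptyset$ for every club $C\in V$, i.e., $S$ is stationary in $V$. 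With this repair your proof is correct and coincides with the paper's.
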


\begin{proof} Assume that $b\in V[G_\mathbb{Q}]$ is an ineffable branch for $D.$ By Proposition \ref{closed}, we have $b\in V.$ Define in $V$ the set 
$$S:=\{X\in [\mu]^{<\theta};\spazio b\restr X=D(X) \}.$$
Then, $S$ is stationary in $V[G_\mathbb{Q}],$ hence it is stationary in $V.$ Thus $b$ is an ineffable branch for $D$ in $V.$ \end{proof}

The following proposition is rather ad hoc. It will be used several times in the final theorem.  

\begin{theorem}\label{mafalda}(Second Preservation Theorem) Let $V\subseteq W$ be two models of set theory with the same ordinals and let $\mathbb{P}\in V$ be a forcing notion and $\kappa$ a cardinal in $V$ such that: 
\begin{enumerate}
\item $\mathbb{P}\subseteq \Add(\aleph_n, \tau)^V,$ for some $\tau>\aleph_n,$\\
 and for every $p\in \mathbb{P},$ if $X\subseteq \dom(p),$ then $p\restr X\in \mathbb{P},$
\item $\aleph_{m}^V=\aleph_m^W,$ for every $m\leq n,$ and $W\models \kappa=\aleph_{n+1},$
\item $(V,W)$ has the $\kappa$-covering property,
\item in $V,$ we have $\gamma^{<\aleph_n}<\kappa,$ for every cardinal $\gamma<\kappa.$ 
\end{enumerate}
Let $F\in W$ be a $(\aleph_{n+1},\mu )$-tree with $\mu\geq \aleph_{n+1},$ then for every filter $G_{\mathbb{P}}\subseteq \mathbb{P}$ generic over $W,$ every cofinal branch for $F$ in $W[G_{\mathbb{P}}]$ is already in $W.$
\end{theorem}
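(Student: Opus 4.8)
Let me understand what needs to be proven. We have $V \subseteq W$, a forcing $\mathbb{P} \in V$ that's a suborder of $\text{Add}(\aleph_n, \tau)^V$ (closed under restrictions), various cardinal arithmetic conditions, and $\kappa$ is $\aleph_{n+1}$ in $W$. We have a $(\aleph_{n+1}, \mu)$-tree $F \in W$, and we want: any cofinal branch added by forcing with $\mathbb{P}$ over $W$ is already in $W$.

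The key point: $\mathbb{P}$ is essentially $\aleph_n$-sized conditions (like Cohen forcing at $\aleph_n$), so it has the $\aleph_{n+1}$-chain condition (by the $\Delta$-system lemma with the arithmetic hypothesis). A $\kappa$-cc forcing can't add branches to a $\kappa$-tree in the classical setting — but here the tree is a generalized $(\kappa,\mu)$-tree, so I need the right argument.

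**The approach**

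This mirrors the First Preservation Theorem, but instead of closure I'd exploit the chain condition. The classical fact: a $\kappa$-cc forcing adds no cofinal branch to a tree of height $\kappa$. For $(\kappa,\mu)$-trees there's an analogous "tree-splitting versus antichain" argument. Let me think about whether the branching construction from Theorem \ref{closed} can be adapted.

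Actually, the cleanest strategy is the standard branch-splitting argument for chain conditions. Suppose $\dot{b}$ is a name for a new cofinal branch. First I'd establish that $\mathbb{P}$ has the $\aleph_{n+1}$-cc: since $\mathbb{P} \subseteq \text{Add}(\aleph_n,\tau)^V$ with conditions of size $<\aleph_n$, any $\aleph_{n+1}$-sized family of conditions yields, by the $\Delta$-system lemma (using $\gamma^{<\aleph_n}<\kappa$), a $\Delta$-system with common root; two conditions agreeing on the root are compatible, giving an antichain of size $<\aleph_{n+1}$. This chain condition is computed in $V$, but by the covering property it transfers to a chain condition over $W$.

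The core then: if $\dot b$ names a branch not in $W$, I build a binary tree of conditions $\{p_s : s \in {}^{<\omega_1}2\}$ (or appropriate height) forcing incompatible values, exactly as in \ref{closed}, and the incompatible branch-values give me a large antichain, contradicting the cc. The subtlety is that the tree $F$ lives in $W$ while the combinatorics of $\mathbb{P}$ live in $V$; I need $V$-side antichains to remain antichains over $W$.

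**The main obstacle**

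The hard part will be reconciling the two models. The chain condition of $\mathbb{P}$ is a $V$-fact, but the branch $\dot b$, the tree $F$, and the forcing happen over $W$. The essential tool is hypothesis (3): the $\kappa$-covering property of $(V,W)$. I expect the argument to go: assume two distinct branch-values are forced by incompatible conditions along a splitting; use covering to capture the relevant $<\kappa$-sized pieces of information inside a $V$-set of size $<\kappa$; then use the $\Delta$-system/chain-condition argument inside $V$ to bound the number of distinct branch-restrictions, contradicting that $F$ has $(\kappa,\mu)$-tree levels of size $<\kappa$ while a genuinely new branch would force unboundedly many splittings. Precisely, I would assume towards a contradiction that $\dot b$ is new, find for each pair of incompatible conditions a witnessing coordinate where they force different values, and then — because $\mathbb{P}$ is $\kappa$-cc over $W$ — show there can be at most $<\kappa$ many pairwise-incompatible conditions; but a genuinely new branch forces a full binary splitting tree, producing $2^{\aleph_n} \geq \kappa$ incompatible conditions, contradiction.

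\begin{proof}[Proof sketch]
The plan is to adapt the chain-condition argument for branch preservation, reconciling the model $V$ (where $\mathbb{P}$'s combinatorics live) with $W$ (where $F$ and the branch live). First I would establish that $\mathbb{P}$ satisfies the $\aleph_{n+1}$-chain condition. Since $\mathbb{P} \subseteq \Add(\aleph_n,\tau)^V$ and every condition has domain of size $<\aleph_n$, any family of $\aleph_{n+1}$-many conditions yields, via the $\Delta$-system Lemma together with hypothesis (4), a $\Delta$-system with common root; two conditions sharing the root are compatible, so no antichain can have size $\aleph_{n+1}$. This is a fact computed in $V$, and by the $\kappa$-covering property (hypothesis (3)) any antichain of $\mathbb{P}$ lying in $W$ is covered by a $V$-set of size $<\kappa$, so the chain condition holds as computed in $W$ as well.

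Next, assume towards a contradiction that some $p \in G_{\mathbb{P}}$ forces $\dot{b}$ to be a cofinal branch for $F$ that is not in $W$. As in the proof of Theorem \ref{closed}, I would build by induction a binary tree of conditions $\langle p_s : s \in {}^{<\aleph_n}2 \rangle$ below $p$ together with sets $a_\alpha \in [\mu]^{<\aleph_{n+1}}$, increasing and continuous, and functions $f_s \in \lev_{a_{|s|}}(F)$, such that $p_s \force \dot{b}\restr a_{|s|} = f_s$ and such that at each splitting there is a coordinate $\beta_t$ with $p_{t\smallfrown 0}$ and $p_{t\smallfrown 1}$ forcing different values of $\dot{b}(\beta_t)$. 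The existence of such splittings at every node is exactly what it means for $\dot{b}$ to name a branch not already in $W$: if some $p_t$ had no incompatible extensions deciding $\dot{b}$ two ways on some coordinate, $\dot{b}$ below $p_t$ would be determined in $W$.

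The contradiction then comes from counting. At level $\alpha = \aleph_n$ I would obtain $2^{\aleph_n}$-many conditions $\{p_s : s \in {}^{\aleph_n}2\}$, and for $s \neq t$ the first splitting coordinate $\beta$ witnesses $f_s(\beta) \neq f_t(\beta)$, hence $p_s \perp p_t$; thus $\{p_s : s \in {}^{\aleph_n}2\}$ is an antichain of size $2^{\aleph_n} \geq \aleph_{n+1} = \kappa$, contradicting the $\aleph_{n+1}$-chain condition established above. The main obstacle in making this rigorous is the interplay between the two models: the inductive construction of the $p_s$ and the $a_\alpha$ must be carried out inside $W$ (since $F$ and $\dot{b}$ are only available there), yet the fatal antichain must contradict a chain condition that is genuinely witnessed in $V$. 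Hypothesis (3) is precisely what bridges this gap, guaranteeing that the $W$-definable antichain is trapped inside a $V$-set small enough for the $\Delta$-system bound to apply. A secondary technical point is ensuring the limit stages of the construction succeed; here one uses that $\mathbb{P}$, being a restriction-closed subposet of $\Add(\aleph_n,\tau)^V$, permits a canonical lower bound for any $<\aleph_n$-decreasing sequence of conditions, so the tree can be built all the way to height $\aleph_n$.
\end{proof}
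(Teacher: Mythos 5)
Your proposal diverges fundamentally from the paper's proof, and it contains two genuine gaps, one of which is fatal. The fatal one is the limit stage of your splitting-tree construction. To build $\langle p_s : s\in {}^{<\aleph_n}2\rangle$ you must, at each limit $\alpha<\aleph_n$, find a condition in $\mathbb{P}$ below a decreasing $\alpha$-sequence of conditions. That sequence is constructed in $W$ (it depends on $\dot b$, on $F$, and on the choices made along the way), while $\mathbb{P}$ is an element and subset of $V$. Your proposed ``canonical lower bound,'' the union of the sequence, is indeed a partial function of size $<\aleph_n$, but it is computed in $W$ and there is no reason whatsoever for it to belong to $V$; a set of pairs each lying in $V$ need not itself lie in $V$. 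Hence it need not be a condition of $\Add(\aleph_n,\tau)^V$, let alone of $\mathbb{P}$. In the paper's actual applications this failure is real, not hypothetical: there $W$ contains many new $<\aleph_n$-sequences of elements of $V$ (e.g.\ new reals when $n=1$), and one can code such a new sequence into the choice of the $p_{t\restr\beta}$, so no lower bound exists. This is precisely why the paper has \emph{two} preservation theorems: Theorem \ref{closed} handles forcings that are closed in the relevant model via exactly the splitting argument you propose, whereas Theorem \ref{mafalda} exists for forcings with no closure over $W$, and its proof must be, and is, entirely different.

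The second gap is your transfer of the chain condition from $V$ to $W$. An antichain witnessing the failure of $\kappa$-cc has size $\kappa$, so the $\kappa$-covering property (which concerns sets of size $<\kappa$) does not apply to it, and covering its proper initial segments produces $V$-sets but not $V$-antichains, so no contradiction with the $V$-side $\Delta$-system bound is obtained. Indeed the principle is false in general: a Suslin tree $T$ is ccc in $V$, the pair $(V,V[G_T])$ has the $\omega_1$-covering property since $T$ is ccc, yet in $V[G_T]$ the tree $T$ carries an uncountable antichain. (The same example shows $\kappa$-cc alone never suffices for branch preservation, so your argument cannot dispense with the splitting tree --- which is exactly the part that breaks.) The paper's proof uses hypotheses (3) and (4) in a completely different way: working in $W$, it fixes for each internally approachable $X\prec H_\chi$ of size $\aleph_n$ a condition $p_X\leq p$ and $f_X\in\lev_{X\cap\mu}$ with $p_X\force \dot b\restr X=f_X$, applies the Pressing Down Lemma to stabilize a set $M^*\in X$ with $p_X\restr X\subseteq M^*$ on a stationary set, and then uses covering together with $\gamma^{<\aleph_n}<\kappa$ in $V$ to see that the restrictions $p_X\restr X$ (which lie in $V$ by hypothesis (1)) take fewer than $\kappa$ values; stabilizing to a single value $q$ on a cofinal set $E$, the functions $f_X$ for $X\in E$ cohere into a single $b\in W$, and a density argument below $q$ gives $q\force\dot b=b$. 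So covering and the cardinal arithmetic are used to \emph{count restrictions of conditions}, not to transfer a chain condition, and no closure of $\mathbb{P}$ over $W$ is ever needed.
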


\begin{proof} Work in $W.$ Let $\dot{b}\in W^{\mathbb{P}}$ and let $p\in \mathbb{P}$ such that 
$$p\force \dot{b}\textrm{ is a cofinal branch for $F.$}$$ 

We are going to find a condition $q\in \mathbb{P}$ such that $q\vert \vert p$ and for some $b\in W,$ we have $q\force \dot{b}= b.$
Let $\chi$ be large enough, for all $X\prec H_{\chi}$ of size $\aleph_n,$ we fix a condition $p_X\leq p$ and a function $f_X\in Lev_{X\cap\mu}$ such that 
$$p_X\force \dot{b}\restr X= f_X.$$
 Let $S$ be the set of all the structures $X\prec H_{\chi},$ such that $X$ is internally approachable of length $\aleph_n.$ 
Since every condition of $\mathbb{P}$ has size less than $\aleph_n,$ then for all $X\in S,$ there is $M_X\in X$ of size less than $\aleph_n$ such that 
$$p_X\restr X\subseteq M_X.$$
  By the Pressing Down Lemma, there exists $M^*$ and a stationary set $E^*\subseteq S$ such that $M^*=M_X,$ for all $X\in E^*.$ The set $M^*$ has size less than $\kappa$ in $W,$ hence $A:= \underset{X\in E^*}{\bigcup} p_X\restr M^*$ has size $<\kappa$ in $W.$ By the assumption, $A$ is covered by some $N\in V$ of size 
$\gamma<\kappa$ in $V.$ In $V,$ we have 
$\vert [N]^{<\aleph_n}\vert=  \gamma^{<\aleph_n}< \kappa.$ It follows that in $W$ there are less than $\kappa$ (hence less than $\aleph_{n+1}$) possible values for 
$p_X\restr M^*.$ Therefore, we can find in $W$ a cofinal $E\subseteq E^*$ and a condition $q\in \mathbb{P},$ such that 
$p_X\restr X=q,$ for all $X\in E.$

\begin{claim} $f_X\restr Y= f_Y\restr X,$ for all $X,Y\in E.$
\end{claim}

\begin{proof} Let $X,Y\in E,$ there is $Z\in E$ with $X,Y,\dom(p_X), \dom(p_Y)\subseteq Z.$ Then, we have $p_X\cap p_Z= p_X\cap p_Z\restr Z= p_X\cap q= q,$ thus $p_X\vert \vert p_Z$ and similarly $p_Y\vert\vert p_Z.$ Let $r\leq p_X, p_Z$ and $s\leq p_Y, p_Z,$ then 
$r\force f_Z\restr X= \dot{b}\restr X= f_X$ and $s\force f_Z\restr Y= \dot{b}\restr Y= f_Y.$ It follows that $f_X\restr Y=f_Z\restr (X\cap Y)= f_Y\restr X.$ \end{proof} 

Let $b$ be $\underset{X\in E}{\bigcup} f_X.$ The previous claim implies that $b$ is a function and $$b\restr X= f_X,\textrm{ for all }X\in E.$$ 

\begin{claim} $q\force \dot{b}= b.$
\end{claim}

\begin{proof} We show that for every $X\in E,$ the set $B_X:= \{ s\in \mathbb{P};\ s\force \dot{b}\restr X= b\restr X \}$ is dense below $q.$ Let $r\leq q,$ there is $Y\in E$ such that 
$\dom(r), X\subseteq Y.$ 
It follows that $p_Y\cap r= p_Y\restr Y\cap r= q\cap r= q,$ thus $p_Y\vert \vert r.$ Let $s\leq p_Y, r,$ then $s\in B_X,$ because $s\force \dot{b}\restr X= f_Y\restr X= f_X= b\restr X.$ 
Since $\bigcup\{X\cap \mu; X\in E\}= \mu,$ we have $q\force \dot{b}= b.$
\end{proof}

That completes the proof. \end{proof}

\begin{theorem}\label{secondo} Let $V\subseteq W$ be two models of set theory with the same ordinals and let $\mathbb{P}\in V$ be a forcing notion and $\kappa$ a cardinal in $V$ such that: 
\begin{enumerate}
\item $\mathbb{P}\subseteq \Add(\aleph_n, \tau)^V,$ for some $\tau>\aleph_n,$\\
 and for every $p\in \mathbb{P},$ if $X\subseteq \dom(p),$ then $p\restr X\in \mathbb{P},$
\item $\aleph_{m}^V=\aleph_m^W,$ for every $m\leq n,$ and $W\models \vert \kappa \vert=\aleph_{n+1},$
\item for every set $X\subseteq V$ in $W$ of size $<\aleph_{n+1}$ in $W,$ there is $Y\in V$ of size $<\kappa$ in $V,$ such that $X\subseteq Y,$
\item in $V,$ we have $\gamma^{<\aleph_n}<\kappa,$ for every cardinal $\gamma<\kappa.$ 
\end{enumerate}
Let $F\in W$ be a $(\aleph_{n+1},\mu )$-tree with $\mu\geq \aleph_{n+1},$ then for every filter $G_{\mathbb{P}}\subseteq \mathbb{P}$ generic over $W,$ every cofinal branch for $F$ in $W[G_{\mathbb{P}}]$ is already in $W.$
\end{theorem}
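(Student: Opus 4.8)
The plan is to follow the proof of Theorem~\ref{mafalda} essentially verbatim, isolating the one place where the collapse of $\kappa$ forces a change. Working in $W$, I would fix a name $\dot b$ and a condition $p$ with $p\force\dot b$ is a cofinal branch for $F$. For each internally approachable $X\prec H_\chi$ of length $\aleph_n$ containing $\dot b,\mathbb{P},F,p$, choose $p_X\leq p$ and $f_X\in\lev_{X\cap\mu}$ with $p_X\force\dot b\restr X=f_X$. Since every condition has size $<\aleph_n$, internal approachability gives $M_X\in X$ of size $<\aleph_n$ with $p_X\restr X\subseteq M_X$; as $M_X\subseteq X$ one has $p_X\restr X=p_X\restr M_X$, and by condition~(1) this restriction is again a condition, hence lies in $V$. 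Applying the Pressing Down Lemma to $X\mapsto M_X$ yields a single $M^*$ and a stationary $E^*$ with $M_X=M^*$ for every $X\in E^*$.

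Next I would run the covering step exactly as before. The set $A:=\bigcup_{X\in E^*}p_X\restr M^*$ is a subset of $V$ lying in $W$, of size $<\aleph_n<\aleph_{n+1}$, so by condition~(3) it is covered by some $N\in V$ of size $\gamma<\kappa$ in $V$. Each value $p_X\restr X$ (equal to $p_X\restr M^*$) then belongs to $([N]^{<\aleph_n})^V$, which by condition~(4) has cardinality $\gamma^{<\aleph_n}<\kappa$ in $V$. This is precisely the point at which the two proofs part ways: in Theorem~\ref{mafalda} one has $\kappa=\aleph_{n+1}^W$, so ``$<\kappa$ many in $V$'' immediately reads as ``$<\aleph_{n+1}$ many in $W$''; here $\kappa$ is only of \emph{size} $\aleph_{n+1}$ in $W$, and a priori a $V$-cardinal below $\kappa$ could be collapsed to $\aleph_{n+1}$ in $W$.

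Resolving this is the main obstacle. The goal is to show that the number of values actually realized as some $p_X\restr X$ is still $<\aleph_{n+1}$ in $W$; granting this, everything concludes as in Theorem~\ref{mafalda}. The useful observation is that all these values are partial functions whose domains are contained in the \emph{fixed} set $M^*$ of size $<\aleph_n$, and that they lie in $V$; hence there are at most $(2^{<\aleph_n})^W$ of them, and it suffices to know that $(2^{<\aleph_n})^W<\aleph_{n+1}$, equivalently that the $V$-set $([N]^{<\aleph_n})^V$ is not collapsed past $\aleph_{n+1}$ in $W$. I expect this bound to be the crux of the whole argument: it cannot follow from the covering of condition~(3) alone, and must be read off from the way $V$ and $W$ are situated (in the intended applications $W$ adds no bounded subsets of $\aleph_n$ and satisfies $2^{<\aleph_n}=\aleph_n$, which gives it at once). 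With this in hand, I would partition the stationary $E^*$ according to the value of $p_X\restr X$ and, using that the club filter on $[H_\chi]^{\aleph_n}$ is $\aleph_{n+1}$-complete, extract a stationary $E\subseteq E^*$ together with a fixed $q$ such that $p_X\restr X=q$ for all $X\in E$; note $q\vert\vert p$.

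From here the proof is identical to that of Theorem~\ref{mafalda}. Stationarity of $E$ yields directedness and cofinality in $\mu$: for $X,Y\in E$ there is $Z\in E$ with $X\cup Y\cup\dom(p_X)\cup\dom(p_Y)\subseteq Z$, and since $p_X\restr X=p_Z\restr Z=q$ one gets $p_X\cap p_Z=q$, so $p_X\vert\vert p_Z$; a common lower bound then forces $f_X=f_Z\restr X$, whence $f_X\restr Y=f_Y\restr X$. Thus $b:=\bigcup_{X\in E}f_X\in W$ is a well-defined function with $b\restr X=f_X$ for each $X\in E$. Finally, for every $X\in E$ the set $\{s\in\mathbb{P};\ s\force\dot b\restr X=b\restr X\}$ is dense below $q$: given $r\leq q$, choose $Y\in E$ with $\dom(r)\cup X\subseteq Y$, observe $p_Y\cap r=q$ so $p_Y\vert\vert r$, and any $s\leq p_Y,r$ forces $\dot b\restr X=f_Y\restr X=b\restr X$. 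Since $\bigcup_{X\in E}(X\cap\mu)=\mu$, this gives $q\force\dot b=b$, and as $q\vert\vert p$ and $p$ was arbitrary, every cofinal branch for $F$ in $W[G_{\mathbb{P}}]$ already lies in $W$.
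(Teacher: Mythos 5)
You have put your finger on the right crack, and it is a genuine one: the paper's entire proof of Theorem~\ref{secondo} is ``same proof as Theorem~\ref{mafalda}'', and the one step of that proof which does not transfer is exactly the counting step you isolate. Under the hypotheses of Theorem~\ref{secondo} the argument only yields that the realized values $p_X\restr M^*$ lie in a set of $V$-cardinality $\gamma^{<\aleph_n}<\kappa$, and this ordinal may have cardinality exactly $\aleph_{n+1}$ in $W$, since $\kappa$ is no longer assumed to be a $W$-cardinal. Moreover one cannot get by with ``$\aleph_{n+1}$ many classes'': a stationary subset of $[H_\chi]^{\aleph_n}$ split into $\aleph_{n+1}$ classes need not have a cofinal class (consider $X\mapsto\sup(X\cap\aleph_{n+1})$, none of whose fibers is cofinal), so the $\aleph_{n+1}$-completeness of the club filter really does require at most $\aleph_n$ classes. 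Up to this point your analysis is more careful than the paper's.

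The gap is in your repair. You reduce everything to the bound $(2^{<\aleph_n})^W\leq\aleph_n$, and you justify it by asserting that in the intended applications $W$ adds no bounded subsets of $\aleph_n$ over $V$ and satisfies $2^{<\aleph_n}=\aleph_n$. Both assertions fail in the application that the theorem exists for: in the proof of part (2) of the main theorem, Theorem~\ref{secondo} is invoked with $n=1$, $V=N$, $W=N[G][H_{\mathbb{Q}^*}]$ and the theorem's $\kappa$ equal to $\lambda$; there $G$ adds $\aleph_2$ many reals over $N$, and since $\mathbb{Q}^*$ adds no new reals and preserves $\aleph_2$, one has $W\models 2^{<\aleph_1}=2^{\aleph_0}=\aleph_2$. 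So your patched statement, besides proving a theorem with an extra hypothesis rather than the theorem as stated, could not be used where the paper needs it. Note also that your parenthetical ``equivalently'' conflates two different sufficient conditions: the $W$-side bound $(2^{<\aleph_n})^W\leq\aleph_n$ (which fails there) and the $V$-side bound that the covering set $N$ satisfies $(\gamma^{<\aleph_n})^V<\aleph_{n+1}^W$ (which is what actually holds). Pursuing the latter leads to the usable fix: one needs covering by $V$-sets whose $V$-cardinality $\gamma$ satisfies $\gamma^{<\aleph_n}<\aleph_{n+1}^W$, i.e.\ one should really apply Theorem~\ref{mafalda} with $\kappa$ replaced by the cardinal $\theta:=\aleph_{n+1}^W$. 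In the application this is available because $\theta$ is the first supercompact cardinal, hence inaccessible in $N$, and $(N,W)$ has the $\theta$-covering property: sets of ordinals of size $\aleph_1$ in $W$ already lie in $N[G_{\mathbb{R}}][G_{\mathbb{P}}]$ by the $\aleph_2$-closure of $\mathbb{Q}^*$ together with Lemma~\ref{nuovo}, and $\mathbb{R}\ast\dot{\mathbb{P}}$ is $\theta$-c.c.\ by Lemma~\ref{prova}. As it stands, your proposal leaves the theorem as stated unproved and supports its key step with a claim about the applications that is false.
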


\begin{proof} Same proof as for the Second Preservation Theorem. \end{proof}




\section{The Main Forcing}\label{mainforcing}

\begin{definition} Let $\eta$ be a regular cardinal and let $\theta$ be any ordinal, we define
$$\mathbb{P}(\eta, \theta):=\{p\in \Add(\eta, \theta);\ \textrm{for every $\alpha\in\dom(p),$ $\alpha$ is a successor ordinal }\},$$
$\mathbb{P}(\eta, \theta)$ is ordered by reverse inclusion. \end{definition}

For $E\subseteq \theta,$ we denote by $P(\eta, \theta)\restr E$ the set of all functions in $P(\eta, \theta)$ with domain a subset of $E.$ The following definition is due to Abraham \cite{Abraham}. 

\begin{definition}\label{Mitchell}Assume that $V\subseteq W$ are two models of set theory with the same ordinals, let $\eta$ be a regular cardinal in $W$ and let 
$\mathbb{P}:=\mathbb{P}(\eta, \theta)^{V},$ where $\theta$ is any ordinal. We define in $W$ the poset $\mathbb{M}(\eta, \theta, V, W)$ as follows:\\

$(p,q)\in \mathbb{M}(\eta, \theta, V, W)$ if, and only if,  
\begin{enumerate}
\item $p\in \mathbb{P}(\eta, \theta)^{V};$
\item $q\in W$ is a partial function on $\theta$ of size $\leq \eta$ such that for every $\alpha\in \dom(q),$ $\alpha$ is a successor ordinal, $q(\alpha)\in W^{\mathbb{P}\restr \alpha},$ and  
$\force_{\mathbb{P}\restr \alpha}^{W} q(\alpha) \in \Add(\eta^+)^{V[\redux{\mathbb{P}\restr \alpha}]}.$ 
\end{enumerate}

$\mathbb{M}(\eta, \theta, V, W)$ is partially ordered by $(p, q)\leq (p', q')$ if, and only if, 
\begin{enumerate}
\item $p\leq p';$
\item $\dom(q')\subseteq \dom(q);$
\item $p\restr \alpha\force_{P\restr \alpha}^{W} q(\alpha)\leq q'(\alpha),$ for all $\alpha\in \dom(q').$ 
\end{enumerate}
\end{definition}

\

If $\theta$ is a  weakly compact cardinal, then $\mathbb{M}(\aleph_n, \theta, V, V)$ corresponds to the forcing defined by Mitchell for a model of the tree property at $\aleph_{n+2}$ (see \cite{Mitchell}). Weiss proved that a variation of that forcing with $\theta$ supercompact, produces a model of the super tree property for $\aleph_{n+2}.$ Let's discuss a naive attempt to build a model of the super tree property for two successive cardinals $\aleph_n,$ $\aleph_{n+1}$ (with $n\geq 2$). We start with two supercompact cardinals 
$\kappa<\lambda$ in a model $V,$ then we force with $\mathbb{M}(\aleph_{n-2}, \kappa, V, V)$ over $V$ obtaining a model $W;$ finally, we force over $W$ with 
$\mathbb{M}(\aleph_{n-1}, \lambda, W, W).$ The problem with this approach is that the second stage might introduce an $(\aleph_n, \mu)$-tree $F$ with no cofinal branches. Therefore, we have to define the first stage of the iteration so that it will make the \itp at $\aleph_n$ ``indestructible''. The forcing notion required for that ``anticipates'' at the first stage a fragment of 
$\mathbb{M}(\aleph_{n-1}, \lambda, W, W).$ 

\begin{definition} For $V,W$ and $\eta, \theta$ like in Definition \ref{Mitchell}, we define 
$$\mathbb{Q}^*(\eta, \theta, V, W):= \{(\emptyset, q);\ (\emptyset, q)\in \mathbb{M}(\eta, \theta, V, W) \}.$$
\end{definition}

The poset defined hereafter is a variation of the forcing construction defined by Abraham in \cite[Definition 2.14]{Abraham}. 

\begin{definition}\label{nuovaiterazione} Let $V$ be a model of set theory, and suppose that $\theta>\aleph_n$ is an inaccessible cardinal. 
Let $\mathbb{P}:= \mathbb{P}(\aleph_n, \theta)^V$ and let $L:\theta \to V_\theta$ be any function. Define  
$$\mathbb{R}:= \mathbb{R}(\aleph_n, \theta, L)$$ 
as follows. For each $\beta\leq \theta,$ we define by induction $\mathbb{R}\restr \beta$ and then we set $\mathbb{R}= \mathbb{R}\restr \kappa.$\\
 
$\mathbb{R}\restr 0$ is the trivial forcing.\\
$(p,q,f)\in \mathbb{R}\restr \beta$ if, and only if
\begin{enumerate}
\item $p\in \mathbb{P}\restr \beta(= \mathbb{P}(\aleph_n, \beta)^V);$
\item $q$ is a partial function on $\beta$ of size $\leq \aleph_n,$ such that for every $\alpha\in \dom(q),$ $\alpha$ is a successor ordinal, $q(\alpha)\in V^{\mathbb{P}\restr \alpha}$ and 
$\force_{\mathbb{P}\restr \alpha} q(\alpha) \in \Add(\aleph_{n+1})^{V[\redux{\mathbb{P}\restr \alpha}]};$ 
\item $f$ is a partial function on $\beta$ of size $\leq \aleph_n$ such that for all $\alpha\in \dom(f),$ $\alpha$ is a limit ordinal, $f(\alpha)\in V^{\mathbb{R}\restr \alpha}$ and 
$$\force_{\mathbb{R}\restr \alpha} L(\alpha)\textrm{ is an ordinal such that } f(\alpha)\in \mathbb{Q}^*(\aleph_{n+1}^{V[\redux{\mathbb{R}\restr \alpha}]}, L(\alpha), V, V[\redux{\mathbb{R}\restr \alpha}]).$$ 
\end{enumerate}

\

$\mathbb{R}\restr {\beta}$ is partially ordered by $(p,q,f)\leq (p',q',f')$ if, and only if: 
\begin{enumerate}
\item $p\leq p';$
\item $\dom(q')\subseteq \dom(q);$
\item $p\restr \alpha\force_{P\restr \alpha} q(\alpha)\leq q'(\alpha),$ for all $\alpha\in \dom(q').$ 
\item $\dom(f')\subseteq \dom(f);$ 
\item for all $\alpha\in \dom(f'),$ if $(p,q,f)\restr \alpha:= (p\restr \alpha, q\restr \alpha, f\restr \alpha),$ then 
$$(p,q,f)\restr \alpha\force_{\mathbb{R}\restr {\alpha}} f(\alpha)\leq f'(\alpha)$$ 
\end{enumerate}
\end{definition}


Assume that $V$ is a model of $\ZFC$ with two supercompact cardinals $\kappa<\lambda,$ and $L: \kappa\to V_\kappa$ is the Laver function. Let $\mathbb{R}:= \mathbb{R}(\aleph_0, \kappa, L )$ and let $G_{\mathbb{R}}\subseteq \mathbb{R}$ be any generic filter over $V.$ Assume that $G_{\mathbb{M}}$ is an $\mathbb{M}(\aleph_1, \lambda, V, V[G_{\mathbb{R}}])$-generic filter over $V[G_{\mathbb{R}}],$ we will prove in \S \ref{theorem} that both $\aleph_2$ and $\aleph_3$ satisfy the super tree property in 
$V[G_{\mathbb{R}}][G_{\mathbb{M}}].$

\section{Factoring Mitchell's Forcing}\label{bricks}

In this section, $V, W, \eta, \theta$ are like in Definition \ref{Mitchell}. None of the result of this section are due to the author. For more details see \cite{Abraham}.


\begin{remark}\label{stracotta} The function $\pi:\ \mathbb{M}(\eta, \theta, V,W)\to \mathbb{P}(\eta, \theta)^{V}$ defined by $\pi(p,q):=p$ is a projection. If 
$\mathbb{P}:= \mathbb{P}(\eta, \theta)^{V}$ and if $G_\mathbb{P}$ is a $\mathbb{P}$-generic filter over $W,$ then we define in $W[G_\mathbb{P}]$ the poset 
$$\mathbb{Q}(\eta, \theta, V,W, G_{\mathbb{P}}):= \mathbb{M}(\eta, \theta, V,W)/ G_\mathbb{P}.$$ 
\end{remark}


\begin{lemma}\label{proiezione} The function $\sigma: \mathbb{P}(\eta, \theta)^{V}\times \mathbb{Q}^*(\eta, \theta, V,W)\to \mathbb{M}(\eta, \theta, V,W)$ defined by 
$\sigma(p, (\emptyset, q )):= (p,q)$ is a projection. If $G_\mathbb{M}$ is a $W$-generic filter over $\mathbb{M}(\eta, \theta, V,W),$ then we define in $W[G_\mathbb{M}]$ the poset: 
$$\mathbb{S}(\eta, \theta, V,W, G_{\mathbb{M}}):= (\mathbb{P}(\eta, \theta)^{V}\times \mathbb{Q}^*(\eta, \theta, V,W))/ G_\mathbb{M}.$$ 
\end{lemma}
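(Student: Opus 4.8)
The plan is to check directly that $\sigma(p,(\emptyset,q)) := (p,q)$ satisfies the three conditions in the definition of a projection, using the orderings on the product $\mathbb{P}(\eta,\theta)^V \times \mathbb{Q}^*(\eta,\theta,V,W)$ and on $\mathbb{M}(\eta,\theta,V,W)$ given in Definitions \ref{Mitchell} and the surrounding text. First I would confirm $\sigma$ is well-defined and order-preserving: if $(p,(\emptyset,q)) \leq (p',(\emptyset,q'))$ in the product, this means $p \leq p'$ in $\mathbb{P}(\eta,\theta)^V$ and $(\emptyset,q) \leq (\emptyset,q')$ in $\mathbb{Q}^*$, which by Definition \ref{Mitchell} unfolds to $\dom(q') \subseteq \dom(q)$ and $p'' \restr \alpha \force q(\alpha) \leq q'(\alpha)$ (where the first coordinate of the $\mathbb{Q}^*$ condition is $\emptyset$, so the relevant forcing is below $\emptyset\restr\alpha$). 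I would observe that these are exactly the clauses (1)--(3) defining $(p,q) \leq (p',q')$ in $\mathbb{M}$, so condition (1) of projection holds. Condition (2), $\sigma$ sends the top element $(1_\mathbb{P},(\emptyset,\emptyset))$ to $(1_\mathbb{P},\emptyset) = 1_\mathbb{M}$, is immediate.

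The substantive clause is the third projection condition: given $(p,(\emptyset,q))$ and a condition $(p^*,q^*) \leq \sigma(p,(\emptyset,q)) = (p,q)$ in $\mathbb{M}$, I must produce $(p',(\emptyset,q')) \leq (p,(\emptyset,q))$ in the product with $\sigma(p',(\emptyset,q')) = (p',q') \leq (p^*,q^*)$. The natural candidate is simply $(p',(\emptyset,q')) := (p^*,(\emptyset,q^*))$ itself: I would verify that $(\emptyset,q^*)$ genuinely lies in $\mathbb{Q}^*$, i.e.\ that each value $q^*(\alpha)$ satisfies the requirement $\force_{\mathbb{P}\restr\alpha} q^*(\alpha) \in \Add(\eta^+)^{V[\redux{\mathbb{P}\restr\alpha}]}$. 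This is inherited directly from the fact that $(p^*,q^*) \in \mathbb{M}$, since clause (2) of the $\mathbb{M}$-membership definition imposes precisely this condition on the second coordinate regardless of the first. Then $\sigma(p^*,(\emptyset,q^*)) = (p^*,q^*)$, which trivially refines itself, and one checks $(p^*,(\emptyset,q^*)) \leq (p,(\emptyset,q))$ in the product using $(p^*,q^*)\leq(p,q)$ in $\mathbb{M}$ to extract $p^* \leq p$ for the first factor and the conditions on $\dom$ and forcing for the $\mathbb{Q}^*$ factor.

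The one point demanding genuine care, and which I expect to be the main technical obstacle, is the interaction between the first coordinate of a $\mathbb{Q}^*$ condition being forced to be trivial and the forcing statements $\force_{\mathbb{P}\restr\alpha}$ governing the second coordinate. Specifically, the ordering on $\mathbb{Q}^*$ (inherited from $\mathbb{M}$) refers to $p\restr\alpha \force q(\alpha)\leq q'(\alpha)$, but in $\mathbb{Q}^*$ the first coordinate is $\emptyset$, so I must confirm that the relevant instances reduce correctly and that comparing $(\emptyset,q^*)\leq(\emptyset,q)$ in $\mathbb{Q}^*$ really does correspond, after applying $\sigma$, to the part of $(p^*,q^*)\leq(p,q)$ concerning the second coordinates, with $p^*\restr\alpha$ versus $\emptyset\restr\alpha = \emptyset$ handled by the fact that $\emptyset \force \varphi$ implies $p^*\restr\alpha \force\varphi$. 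Once this bookkeeping is in place the verification is routine, and the definition of $\mathbb{S}(\eta,\theta,V,W,G_\mathbb{M}) := (\mathbb{P}(\eta,\theta)^V \times \mathbb{Q}^*(\eta,\theta,V,W))/G_\mathbb{M}$ is then justified as a quotient by a genuine projection.
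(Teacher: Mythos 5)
There is a genuine gap, and it sits exactly at the point you flagged as ``bookkeeping.'' For the third projection condition you take, given $(p^*,q^*)\leq (p,q)$ in $\mathbb{M}$, the candidate $(p^*,(\emptyset,q^*))$ and claim it lies below $(p,(\emptyset,q))$ in the product. But the ordering on the $\mathbb{Q}^*$ factor is inherited from $\mathbb{M}$ with first coordinate $\emptyset$, so $(\emptyset,q^*)\leq(\emptyset,q)$ requires $\force_{\mathbb{P}\restr\alpha}^{W} q^*(\alpha)\leq q(\alpha)$, i.e.\ the inequality between the names must be forced by the \emph{trivial} condition. What $(p^*,q^*)\leq(p,q)$ in $\mathbb{M}$ gives you is only $p^*\restr\alpha\force q^*(\alpha)\leq q(\alpha)$. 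Your attempted repair --- ``$\emptyset\force\varphi$ implies $p^*\restr\alpha\force\varphi$'' --- is a true implication but points in the wrong direction: you have the statement forced below $p^*\restr\alpha$ and need it forced by $\emptyset$, and forcing does not propagate from stronger conditions to weaker ones. Concretely, $q^*(\alpha)$ may be a name that is interpreted as something incompatible with (or above) $q(\alpha)$ on the part of $\mathbb{P}\restr\alpha$ incompatible with $p^*\restr\alpha$, in which case $(\emptyset,q^*)\not\leq(\emptyset,q)$ in $\mathbb{Q}^*$ and your candidate is simply not a condition below $(p,(\emptyset,q))$.

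The paper's proof exists precisely to handle this: it does not use $q^*$ itself but a \emph{mixed} name. Given $(p',q')\leq(p,q)$, it defines $q^{*}(\alpha)$ so that $p'\restr\alpha\force q^{*}(\alpha)=q'(\alpha)$ while every $r\in\mathbb{P}\restr\alpha$ incompatible with $p'\restr\alpha$ forces $q^{*}(\alpha)=q(\alpha)$ (for $\alpha\notin\dom(q)$ one keeps $q'(\alpha)$). This mixture satisfies $\force_{\mathbb{P}\restr\alpha}^{W} q^{*}(\alpha)\leq q(\alpha)$ by a density argument, so $(p',(\emptyset,q^{*}))\leq(p,(\emptyset,q))$ genuinely holds in the product, and since $p'$ forces $q^{*}$ and $q'$ to agree, $\sigma(p',(\emptyset,q^{*}))=(p',q^{*})\leq(p',q')$ (indeed $[(p',q^{*})]=[(p',q')]$), which is what the projection condition demands. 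Your first two verifications (order preservation and preservation of the top element) are correct --- note that order preservation works because there the implication runs in the harmless direction --- but without the mixing step the proof of the substantive clause does not go through.
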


 \begin{proof} Let $\mathbb{P}:= \mathbb{P}(\eta,\theta)^V$ and $\mathbb{Q}^*:= \mathbb{Q}^*(\eta,\theta, V,W).$ It is clear that $\sigma$ preserves the identity and respect the ordering relation. Let $(p',q')\leq \sigma(p, (\emptyset, q)).$ Define $q^*$ as follows: $\dom(q^*)= \dom(q')$ and for $\alpha\in \dom(q'),$ if $\alpha\notin \dom(q),$ then $q^*(\alpha):= q'(\alpha);$ if 
 $\alpha\in \dom(q),$ we define $q^*(\alpha)\in W^{\mathbb{P}\restr \alpha}$ such that the following hold: 
 \begin{enumerate}
 \item $p'\restr \alpha \force q^*(\alpha)= q'(\alpha),$ 
 \item  if $r\in \mathbb{P}\restr \alpha$ is incompatible with $p'\restr \alpha,$ then $r\force q^*(\alpha)= q(\alpha).$
 \end{enumerate}
  So $\force_{\mathbb{P}\restr \alpha}^{W} q^*(\alpha)\leq q(\alpha),$ hence $(p', (\emptyset, q^*))\leq (p, (\emptyset, q))$ in $\mathbb{P}\times \mathbb{Q}^*$ and 
 $\sigma(p', (\emptyset, q^*))=(p',q^*).$ Moreover $[(p', q^*)]= [(p', q')],$ that completes the proof. \end{proof}
  
\begin{lemma}\label{dirclosed} $\mathbb{Q}^*(\eta, \theta, V,W)$ is $\eta^+$-directed closed in $W.$
\end{lemma}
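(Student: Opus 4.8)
The plan is to establish $\eta^+$-directed closure by the natural coordinatewise union argument, working throughout in $W$. Let $D=\{(\emptyset,q_i):i<\delta\}$ be a directed subset of $\mathbb{Q}^*(\eta,\theta,V,W)$ with $\delta\leq\eta$; I want to produce a single lower bound $(\emptyset,q)\in\mathbb{Q}^*(\eta,\theta,V,W)$. First I would fix the domain by setting $\dom(q):=\bigcup_{i<\delta}\dom(q_i)$. Since each $\dom(q_i)$ has size $\leq\eta$ and there are only $\delta\leq\eta$ of them, we get $|\dom(q)|\leq\eta$, and every element of $\dom(q)$ is a successor ordinal, so the domain has the form required by the definition of $\mathbb{M}(\eta,\theta,V,W)$.

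Next comes the coordinatewise definition. For each $\alpha\in\dom(q)$ let $I_\alpha:=\{i<\delta:\alpha\in\dom(q_i)\}$, and let $q(\alpha)\in W^{\mathbb{P}\restr\alpha}$ be a name for $\bigcup_{i\in I_\alpha}q_i(\alpha)$. The role of directedness of $D$ is to guarantee coherence at each coordinate: given $i,j\in I_\alpha$, directedness produces $k<\delta$ with $(\emptyset,q_k)\leq(\emptyset,q_i),(\emptyset,q_j)$, whence $\dom(q_i),\dom(q_j)\subseteq\dom(q_k)$ (so $k\in I_\alpha$) and $\force_{\mathbb{P}\restr\alpha}q_k(\alpha)\leq q_i(\alpha),q_j(\alpha)$. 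Thus $\force_{\mathbb{P}\restr\alpha}$ ``$\{q_i(\alpha):i\in I_\alpha\}$ is a directed subfamily of $\Add(\eta^+)^{V[\redux{\mathbb{P}\restr\alpha}]}$ of size $\leq\eta$''. The key step is then to check that $\force_{\mathbb{P}\restr\alpha}q(\alpha)\in\Add(\eta^+)^{V[\redux{\mathbb{P}\restr\alpha}]}$, for which I would invoke that $\Add(\eta^+)$ is $<\eta^+$-closed, in fact $\eta^+$-directed closed: the union of a directed family of $\leq\eta$ conditions is again a partial function into $2$ of size $\leq\eta$, i.e.\ again a condition, and it is their greatest lower bound. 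Consequently $q(\alpha)$ is forced to be a condition in $\Add(\eta^+)^{V[\redux{\mathbb{P}\restr\alpha}]}$ lying below each $q_i(\alpha)$, and $(\emptyset,q)$ is a legitimate element of $\mathbb{Q}^*(\eta,\theta,V,W)$.

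It then remains to verify that $(\emptyset,q)$ is a lower bound: for each $i<\delta$ we have $\dom(q_i)\subseteq\dom(q)$, and for every $\alpha\in\dom(q_i)$ the construction gives $\force_{\mathbb{P}\restr\alpha}q(\alpha)\leq q_i(\alpha)$, which is exactly the relation $(\emptyset,q)\leq(\emptyset,q_i)$ in the ordering inherited from $\mathbb{M}(\eta,\theta,V,W)$. The step I expect to be the real obstacle is the middle one, namely confirming that the coordinatewise union genuinely lands in $\Add(\eta^+)^{V[\redux{\mathbb{P}\restr\alpha}]}$: the directed-closure of that poset is computed in the intermediate model $V[\redux{\mathbb{P}\restr\alpha}]$, yet the directed family $\{q_i(\alpha):i\in I_\alpha\}$ is being handled in the extension of $W$ by $\mathbb{P}\restr\alpha$, so some care is needed to see that the union of such a short family of $\Add(\eta^+)^{V[\redux{\mathbb{P}\restr\alpha}]}$-conditions is itself forced to be a condition of $\Add(\eta^+)^{V[\redux{\mathbb{P}\restr\alpha}]}$ rather than merely of the larger $W$-side Cohen poset. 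Once this is secured, the remaining bookkeeping on domains and the lower-bound relation is routine.
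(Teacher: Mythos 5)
Your coordinatewise-union scheme, with directedness supplying the coherence, is the natural first move, and you have put your finger on exactly the right spot: everything hinges on showing that the union is forced to lie in $\Add(\eta^+)^{V[\redux{\mathbb{P}\restr \alpha}]}$, i.e.\ in the intermediate model $V[G\restr\alpha]$, and not merely to be a small partial function in $W[G\restr\alpha]$. But deferring that step is not deferring bookkeeping: it is the entire content of the lemma, and it cannot be ``secured'' at the level of generality at which you are arguing. The $\eta^+$-directed closure of $\Add(\eta^+)^{V[G\restr\alpha]}$ is a statement internal to $V[G\restr\alpha]$, so it applies only to directed families that are \emph{elements} of $V[G\restr\alpha]$; your family $\{q_i(\alpha)^{G}:\ i\in I_\alpha\}$ is in general only an element of $W[G\restr\alpha]$, because the indexing of the names lives in $W$. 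Nothing in the hypotheses ``$V\subseteq W$ with the same ordinals, $\eta$ regular in $W$'' prevents such a family from having its union outside $V[G\restr\alpha]$, and in fact the statement is \emph{false} for arbitrary pairs. Concretely: let $\eta$ be regular in $V$, let $c:\eta\to 2$ be $\Add(\eta)^V$-generic over $V$, and set $W:=V[c]$. Then $\eta$ is still regular in $W$, each initial segment $c\restr i$ ($i<\eta$) lies in $V$ because $\Add(\eta)^V$ is ${<}\eta$-closed, but $c\notin V$. Take the coordinate $\alpha=1$, so that $\mathbb{P}\restr 1$ is the trivial forcing and $V[\redux{\mathbb{P}\restr 1}]=V$, and put $q_i:=\{(1,\check{(c\restr i)})\}$. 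Then $\{(\emptyset,q_i):\ i<\eta\}$ is a subset of $\mathbb{Q}^*(\eta,\theta,V,W)$ of size $\eta$ lying in $W$, and it is even a chain; yet a lower bound $(\emptyset,q)$ would make $q(1)$ (the forcing at coordinate $1$ being trivial) an element $h$ of $\Add(\eta^+)^{V}$ with $h\supseteq c\restr i$ for every $i<\eta$, whence $c=h\restr\eta\in V$, a contradiction. So for this pair the poset $\mathbb{Q}^*(\eta,\theta,V,W)$ is not even $\eta^+$-closed; this is precisely your ``union of a short family of inner-model conditions'' escaping the inner model.

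Consequently, any correct proof must use properties of the particular pairs $(V,W)$ to which the lemma is applied, and that is what the paper is implicitly doing by quoting Abraham: there the pair is not arbitrary but arises from his construction (and in every application made in this paper one has either $W=V$, as in part of Lemma \ref{nuovo2}, or $W$ a generic extension of $V$ by the specific forcings $\mathbb{R}\restr\alpha$, $\mathbb{R}$, etc.), and the proof exploits the way $W$ sits over $V$ to show that a directed family of size $\leq\eta$ of members of $\Add(\eta^+)^{V[G\restr\alpha]}$ that appears in $W[G\restr\alpha]$ has its union (or at least a common extension) inside $V[G\restr\alpha]$. Your argument never invokes any property of $(V,W)$ beyond $V\subseteq W$, so as written it purports to prove a false statement and cannot be completed; the fix is not more care with the same manipulations, but importing the structural hypotheses on the pair and proving the absorption fact at each coordinate. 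For the record, your argument is essentially complete in the special case $V=W$: there the family of names lies in $V$, hence its evaluation lies in $V[G\restr\alpha]$, and the directed closure of $\Add(\eta^+)^{V[G\restr\alpha]}$ applies exactly as you intend.
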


\begin{proof} See \cite{Abraham} for a proof of that lemma. \end{proof}

\begin{lemma}\label{nuovo} Assume that $\mathbb{P}:= \mathbb{P}(\eta,\theta)^V$ is $\eta^+$-cc in $W,$ for every filter $G_{\mathbb{M}}\subseteq \mathbb{M}(\eta, \theta, V, W)$ generic over $W,$ if $G_{\mathbb{P}}\subseteq \mathbb{P}$ is the projection of $G_{\mathbb{M}}$ to $\mathbb{P},$ then all sets of ordinals in $W[G_{\mathbb{M}}]$ of size $\eta$ are in $W[G_{\mathbb{P}}].$ 
\end{lemma}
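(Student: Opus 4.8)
The plan is to exploit the factorization of $\mathbb{M}:=\mathbb{M}(\eta,\theta,V,W)$ through the product $\mathbb{P}\times\mathbb{Q}^*$, where $\mathbb{P}:=\mathbb{P}(\eta,\theta)^V$ and $\mathbb{Q}^*:=\mathbb{Q}^*(\eta,\theta,V,W)$, and then to finish with a distributivity argument supplied by Easton's Lemma. First I would force below $G_{\mathbb{M}}$ with the quotient $\mathbb{S}:=(\mathbb{P}\times\mathbb{Q}^*)/G_{\mathbb{M}}$ arising from the projection $\sigma$ of Lemma \ref{proiezione}. Let $H$ be $\mathbb{S}$-generic over $W[G_{\mathbb{M}}]$. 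By the general facts about projections recalled in the preliminaries (item (2)), $H$ is $\mathbb{P}\times\mathbb{Q}^*$-generic over $W$ and $\sigma''[H]$ generates $G_{\mathbb{M}}$; in particular $G_{\mathbb{M}}\in W[H]$, so $W[G_{\mathbb{M}}][H]=W[H]$.

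Next I would decompose the product generic. Writing $G_{\mathbb{P}}$ and $G_{\mathbb{Q}^*}$ for the two coordinate projections of $H$, the product forcing lemma gives $W[H]=W[G_{\mathbb{P}}][G_{\mathbb{Q}^*}]$, where $G_{\mathbb{Q}^*}$ is $\mathbb{Q}^*$-generic over $W[G_{\mathbb{P}}]$. Since the composite $\pi\circ\sigma$, with $\pi(p,q)=p$ as in Remark \ref{stracotta}, is exactly the first-coordinate projection of $\mathbb{P}\times\mathbb{Q}^*$, the filter $G_{\mathbb{P}}$ obtained here coincides with the projection of $G_{\mathbb{M}}$ to $\mathbb{P}$ named in the statement. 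This identification is the one piece of bookkeeping that must be checked with care.

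Now let $A\in W[G_{\mathbb{M}}]$ be a set of ordinals with $\vert A\vert=\eta$. Then $A\in W[G_{\mathbb{M}}]\subseteq W[H]=W[G_{\mathbb{P}}][G_{\mathbb{Q}^*}]$. By Lemma \ref{dirclosed}, $\mathbb{Q}^*$ is $\eta^+$-closed in $W$, and $\mathbb{P}$ is $\eta^+$-cc in $W$ by hypothesis, while $\eta^+$ is regular in $W$ since $\eta$ is. Hence Easton's Lemma (part (2)) applies in $W$ with $\kappa=\eta^+$ and yields that $\force_{\mathbb{P}}^{W}\mathbb{Q}^*$ is $<\eta^+$-distributive. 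Consequently, forcing with $\mathbb{Q}^*$ over $W[G_{\mathbb{P}}]$ adds no new sequence of ordinals of length $<\eta^+$. The increasing enumeration $e_A$ of $A$ has length equal to the order type of $A$, an ordinal $<\eta^+$ because $\vert A\vert=\eta$, so $e_A\in W[G_{\mathbb{P}}][G_{\mathbb{Q}^*}]$ cannot be new: $e_A\in W[G_{\mathbb{P}}]$, and therefore $A=\mathrm{ran}(e_A)\in W[G_{\mathbb{P}}]$.

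The step I expect to be the main obstacle is not any single hard estimate but the correct assembly of the factorization: one must verify that composing $G_{\mathbb{M}}$ with the quotient $\mathbb{S}$ genuinely reconstitutes a $\mathbb{P}\times\mathbb{Q}^*$-generic whose first coordinate is precisely the prescribed projection $G_{\mathbb{P}}$, so that the $<\eta^+$-distributivity of $\mathbb{Q}^*$ over $W[G_{\mathbb{P}}]$ is exactly the statement that pins $A$ down in $W[G_{\mathbb{P}}]$. Once this is in place, the distributivity argument is routine.
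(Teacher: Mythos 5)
Your proposal is correct and takes essentially the same route as the paper's own proof: reduce via the projection of Lemma \ref{proiezione} to a $\mathbb{P}\times\mathbb{Q}^*$-generic extension containing $W[G_{\mathbb{M}}]$, and then use Easton's Lemma (the $<\eta^+$-distributivity of $\mathbb{Q}^*$ over $W[G_{\mathbb{P}}]$, since $\mathbb{P}$ is $\eta^+$-cc and $\mathbb{Q}^*$ is $\eta^+$-closed by Lemma \ref{dirclosed}) to conclude the set lies in $W[G_{\mathbb{P}}]$. The paper compresses all of this into two sentences, whereas you spell out the quotient forcing and the identification of the first-coordinate generic with the projected $G_{\mathbb{P}}$ correctly.
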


\begin{proof} By Lemma \ref{proiezione} it is enough to prove that if $G_{\mathbb{P}}\times G_\mathbb{Q}\subseteq \mathbb{P}\times \mathbb{Q}^*(\eta, \theta, V, W)$ is a generic filter over $W,$
then every set of ordinals in $W[G_{\mathbb{P}}\times G_\mathbb{Q}]$ of size $\eta$ is already in $W[G_{\mathbb{P}}].$ This is an easy consequence of the Easton's Lemma. 
\end{proof}

\begin{proposition}\label{collapse1} Assume that $\theta$ is inaccessible in $W$ and let $\mathbb{M}:= \mathbb{M}(\eta, \theta, V, W).$ The following hold:
\begin{enumerate}
\item[(i)] $\vert \mathbb{M}\vert = \theta$ and $\mathbb{M}$ is $\theta$-c.c.; 
\item[(ii)] If $\mathbb{P}(\eta, \theta)^V$ is $\eta^+$-cc in $W,$ then $\mathbb{M}$ preserves $\eta^+;$
\item[(iii)] If $\mathbb{P}(\eta, \theta)^V$ is $\eta^+$-c.c. in $W,$ then $\mathbb{M}$ makes $\theta= \eta^{++}= 2^{\eta}.$
\end{enumerate}
\end{proposition}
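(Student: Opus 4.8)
The plan is to take the three clauses in order, leaning on the factorizations established in this section and on the inaccessibility of $\theta$. For (i) I would first bound $\vert\mathbb{M}\vert$ by a direct count: a condition $(p,q)$ has $p\in\mathbb{P}(\eta,\theta)^V$, of which there are $\theta^{<\eta}=\theta$ since $\theta$ is a strong limit, while $q$ ranges over partial functions on $\theta$ of size $\leq\eta$ whose values are $\mathbb{P}\restr\alpha$-names; as $\vert\mathbb{P}\restr\alpha\vert<\theta$ and $\theta$ is inaccessible, each coordinate admits fewer than $\theta$ relevant nice names, and a domain of size $\leq\eta$ then yields at most $\theta$ such $q$, so $\vert\mathbb{M}\vert=\theta$. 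For the $\theta$-chain condition I would run a $\Delta$-system argument: given $\theta$ conditions, replace each $q_i(\alpha)$ by an equivalent nice name and apply the $\Delta$-system Lemma to the domains $\dom(p_i)\cup\dom(q_i)$ (legitimate since $\alpha^{<\eta^+}<\theta$ for $\alpha<\theta$), obtaining a root $R$. The point where inaccessibility does the work is that there are fewer than $\theta$ possible restrictions to $R$: the number of possible $p_i\restr R$ is at most $2^{\vert R\vert}<\theta$, and since the name-count at each of the $\leq\eta$ coordinates of $R$ is bounded by some $N<\theta$ (a supremum of $\leq\eta$ ordinals below $\theta$, hence below $\theta$ by regularity), the number of possible $q_i\restr R$ is at most $N^{\eta}<\theta$ by strong limitness. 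Pigeonholing the $\theta$ conditions into these fewer than $\theta$ root-types gives a subfamily of size $\theta$ on which both $p_i\restr R$ and $q_i\restr R$ are constant; any two such conditions are then compatible, their coordinatewise union being a condition below both, contradicting that they form an antichain.

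For (ii) I would exploit the projection $\sigma:\mathbb{P}(\eta,\theta)^V\times\mathbb{Q}^*(\eta,\theta,V,W)\to\mathbb{M}$ of Lemma \ref{proiezione}. Since $\sigma$ is a projection, every $\mathbb{M}$-generic extension is contained in a $(\mathbb{P}(\eta,\theta)^V\times\mathbb{Q}^*)$-generic extension, so it suffices to check that this product preserves $\eta^+$. Here $\mathbb{P}(\eta,\theta)^V$ is $\eta^+$-c.c. by hypothesis and $\mathbb{Q}^*$ is $\eta^+$-directed closed by Lemma \ref{dirclosed}; Easton's Lemma then guarantees that forcing with $\mathbb{P}(\eta,\theta)^V$ keeps $\eta^+$ a cardinal and that $\mathbb{Q}^*$ remains $<\eta^+$-distributive over the $\mathbb{P}(\eta,\theta)^V$-extension, so the product adds no surjection from $\eta$ onto $\eta^+$. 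Consequently $\eta^+$ is preserved in the product extension, hence a fortiori in the $\mathbb{M}$-extension.

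For (iii) I would separate the continuum computation from the collapse. That $2^{\eta}=\theta$ follows from Lemma \ref{nuovo}: every subset of $\eta$ added by $\mathbb{M}$ already lies in $W[G_{\mathbb{P}}]$, where $\mathbb{P}=\mathbb{P}(\eta,\theta)^V$ is $\eta^+$-c.c. of size $\theta$, so $(2^{\eta})^{W[G_{\mathbb{M}}]}\leq(2^{\eta})^{W[G_{\mathbb{P}}]}=\theta$ by a nice-name count, while the $\theta$ Cohen subsets of $\eta$ added by $\mathbb{P}$ give the reverse inequality. By (i) the forcing is $\theta$-c.c., so $\theta$ remains a cardinal, and by (ii) $\eta^+$ is preserved; thus $\eta^+<\theta$ and $\theta\geq\eta^{++}$. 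The remaining, and genuinely hard, point is that $\theta$ is \emph{exactly} $\eta^{++}$, that is, that every $W$-cardinal $\mu$ with $\eta^+<\mu<\theta$ is collapsed to $\eta^+$. This cannot come from either factor alone, since $\mathbb{P}$ is $\eta^+$-c.c. and $\mathbb{Q}^*$ is $\eta^+$-closed; the collapse must instead be extracted from the interleaving of the Cohen and closed parts along initial segments, which is precisely the Mitchell collapsing phenomenon, and I would invoke it in the form established by Abraham \cite{Abraham}. This collapse of the intermediate cardinals is the main obstacle, and once it is in hand the three equalities $\theta=\eta^{++}=2^{\eta}$ follow at once.
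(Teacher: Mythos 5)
Your parts (i) and (ii) are correct and are essentially the paper's own argument: the paper also handles (i) by the ``fewer than $\theta$ names per coordinate'' count together with a $\Delta$-system argument, and its proof of (ii) consists of quoting Lemma \ref{nuovo}, whose proof is exactly the projection-plus-Easton reasoning you spell out (you have simply inlined that lemma). Your computation of $2^{\eta}=\theta$ and the reduction of (iii) to the collapse of the intermediate cardinals are also fine.

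The gap is at the step you yourself call ``the main obstacle'': you never prove that the $W$-cardinals $\mu$ with $\eta^+<\mu<\theta$ are collapsed; you name the phenomenon and cite Abraham. But this collapse \emph{is} the content of (iii), and the paper proves it with a short, concrete argument that a self-contained proof must contain. Namely: fix a cardinal $\alpha\in\,]\eta,\theta[$. Using the fact (noted after Lemma \ref{provaa}) that $(p,q)\mapsto (p\restr\beta, q\restr\beta)$ is a projection, $\mathbb{M}$ projects onto an initial segment whose extension amounts to first forcing with $\mathbb{P}(\eta,\alpha)^V$ and then adding a Cohen subset of $\eta^+$ (one coordinate $q(\beta)$, $\beta>\alpha$, which is a name for a condition of $\Add(\eta^+)$ of the intermediate model). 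After the first step $2^{\eta}\geq\vert\alpha\vert$, since the Cohen generic cut into blocks of length $\eta$ yields $\vert\alpha\vert$ distinct subsets of $\eta$. The second step then collapses $\vert\alpha\vert$ to $\eta^+$ by the classical Mitchell coding: if $F:\eta^+\to 2$ is the generic function, the map $h(\xi)(\beta):=F(\eta\cdot\xi+\beta)$ sends $\eta^+$ onto $({}^{\eta}2)$ of the intermediate model, because any condition $s$ (of size $\leq\eta$) has domain contained in some $\eta\cdot\xi$ and can be extended, inside the intermediate model, by writing any prescribed $x\in{}^{\eta}2$ on the block $[\eta\cdot\xi,\eta\cdot\xi+\eta)$. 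Since the $\mathbb{M}$-extension contains such a two-step extension for every $\alpha<\theta$, every $W$-cardinal in $]\eta^+,\theta[$ is collapsed to $\eta^+$, and together with your (i), (ii) and the continuum computation this yields $\theta=\eta^{++}=2^{\eta}$. One further caution: your justification that ``the collapse cannot come from either factor alone, since $\mathbb{Q}^*$ is $\eta^+$-closed'' is not a valid inference --- $\eta^+$-closed forcings can certainly collapse cardinals above $\eta^+$ (e.g.\ $\mathrm{Coll}(\eta^+,\mu)$); what is true, and what the above argument shows, is that the collapse is produced by the interaction of the Cohen coordinates with the $\Add(\eta^+)$ coordinates. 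As written, your (iii) is an appeal to authority at precisely the point where an argument is required; with the projection-and-coding paragraph above inserted, it closes up.
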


\begin{proof} $(i)$ The proof that $\vert \mathbb{M}\vert= \theta $ is omitted. The key point is that since $\kappa$ is inaccessible, then $\mathbb{P}(\eta, \theta)$ has size 
$\theta$ and for every $(p,q)\in \mathbb{M},$ there are fewer than $\theta$ possibilities for $q(\alpha).$ 
The proof that $\mathbb{M}$ is $\theta$-c.c. is a standard application of the $\Delta$-system Lemma.\\ 
\indent $(ii)$ It follows from Lemma \ref{nuovo}.\\
\indent $(iii)$ For every cardinal $\alpha\in ]\eta, \theta[,$ $\mathbb{M}$ projects to $\mathbb{P}(\eta, \alpha)^V$ which makes $2^{\eta}\geq \alpha$ and then adds a Cohen subset of $\eta^+.$ That forcing will collapse $\alpha$ to $\eta^+.$ By the previous claims, $\eta^+$ is preserved and $\theta$ remains a cardinal after forcing with $\mathbb{M}.$ So, 
$\mathbb{M}$ makes $\theta=\eta^{++}.$ \end{proof}

\begin{lemma}\label{provaa} The following hold:

\begin{enumerate}
\item\label{stracotta1} Assume that $\mathbb{P}:= \mathbb{P}(\eta, \theta)^{V},$ if $\mathbb{P}$ adds no new $< \eta$ sequences to $W,$ then 
$$\force_{\mathbb{P}}^W \mathbb{Q}(\eta, \theta, V,W, \redux{\mathbb{P}})\textrm{ is $\eta$-closed};$$

\item\label{S} Assume that $\mathbb{P}:= \mathbb{P}(\eta, \theta)^{V}$ and $\mathbb{M}:= \mathbb{M}(\eta, \theta, V,W).$ If $\mathbb{P}$ adds no new 
$< \eta$ sequences to $W,$ then $\force_{\mathbb{M}}^W \mathbb{S}(\eta, \theta, V,W, \redux{\mathbb{M}})\textrm{ is $\eta$-closed}.$
\end{enumerate}

\end{lemma}

\begin{proof} See \cite{Abraham}. \end{proof}

\

For any ordinal $\alpha\in ]\eta, \theta[,$ 
the function $(p,q)\mapsto (p\restr \alpha, q\restr \alpha)$ is a projection from $\mathbb{M}(\eta, \theta, V,W)$ to $\mathbb{M}_{\alpha}:= \mathbb{M}(\eta, \alpha, V,W).$ 
We want to analyse 
$$\mathbb{M}(\eta, \theta, V,W)/ G_{\mathbb{M}_{\alpha}},$$
where $G_{\mathbb{M}_{\alpha}}\subseteq \mathbb{M}_{\alpha}$ is any generic filter over $W.$ 
Consider the following definition.

\begin{definition} Let $\theta'\in ]\eta, \theta[$ be any ordinal and let $\mathbb{P}:=\mathbb{P}(\eta, \theta)^{V}.$ Let $\mathbb{M}_{\theta'}:= \mathbb{M}(\eta, \theta', V,W)$ and assume that 
$G_{\mathbb{M}_{\theta'}}\subseteq \mathbb{M}_{\theta'}$ is any generic filter over $W,$ then we define in $W':= W[G_{\mathbb{M}_{\theta'}}],$ the following poset $\mathbb{M}(\eta, \theta-\theta', V, W').$\\
 
$(p,q)\in \mathbb{M}(\eta, \theta-\theta', V, W')$ if, and only if,
\begin{enumerate}
\item $p\in \mathbb{P}\restr (\theta-\theta');$
\item $q\in W'$ is a partial function on $]\theta', \theta[$ of size $\leq \eta$ such that for every\linebreak
$\alpha\in \dom(q),$ $\alpha$ is a successor ordinal, 
$q(\alpha)\in (W')^{\mathbb{P}\restr (\alpha-\theta')},$ and 
$$\force_{\mathbb{P}\restr (\alpha-\theta')}^{W'} q(\alpha)\in \Add(\eta^+)^{W'[\redux{\mathbb{P}\restr (\alpha-\theta')}]}.$$
\end{enumerate}

$\mathbb{M}(\eta, \theta-\theta', V,W)$ is partially ordered as in Definition \ref{Mitchell}.
\end{definition}

\begin{lemma}\cite[Lemma 2.12]{Abraham}\label{Abraham} Let $\theta'\in ]\eta, \theta[$ be any ordinal and let $\mathbb{M}_{\theta'}:= \mathbb{M}(\eta, \theta', V,W)$ with $G_{\mathbb{M}_{\theta'}}\subseteq \mathbb{M}_{\theta'}$ a generic filter over 
$W.$ Assume that $\mathbb{P}(\eta, \theta)$ is $\eta^+$-cc in $W$ and in $W[G_{\mathbb{M}_{\theta'}}],$ then 
$$\mathbb{M}(\eta, \theta, V,W)\equiv \mathbb{M}_{\theta'}\ast \mathbb{M}(\eta, \theta- \theta', V, W[\redux{\mathbb{M}_{\theta'}}]).$$
\end{lemma}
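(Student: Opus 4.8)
The plan is to realize the factoring as a dense embedding between the separative quotients of the two posets, which by the characterization of $\equiv$ given in the preliminaries yields the desired equivalence. Throughout, write $\mathbb{M}:=\mathbb{M}(\eta,\theta,V,W)$ and $\mathbb{M}_{\theta'}:=\mathbb{M}(\eta,\theta',V,W)$, and for a generic $G_{\mathbb{M}_{\theta'}}$ with $W':=W[G_{\mathbb{M}_{\theta'}}]$ write $\mathbb{N}:=\mathbb{M}(\eta,\theta-\theta',V,W')$. First I would split every condition $(p,q)\in\mathbb{M}$ along $\theta'$: set $(p,q)\restr\theta':=(p\restr\theta',\,q\restr\theta')$, which is a condition of $\mathbb{M}_{\theta'}$ by the projection of Remark \ref{stracotta}, and let $(p,q)^{\theta'}:=(p\restr\,]\theta',\theta[\,,\,q\restr\,]\theta',\theta[\,)$ be the upper part. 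The factoring map $\Phi$ then sends $(p,q)$ to the pair whose first coordinate is $(p,q)\restr\theta'$ and whose second coordinate is a canonical $\mathbb{M}_{\theta'}$-name for $(p,q)^{\theta'}$, viewed as a condition of $\mathbb{N}$.

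The first thing to verify is that $(p,q)^{\theta'}$ is forced to be a genuine condition of $\mathbb{N}$. The component $p\restr\,]\theta',\theta[$ lies in $\mathbb{P}\restr(\theta-\theta')$ directly. The delicate point is the second component: each $q(\alpha)$ with $\alpha>\theta'$ is, in $\mathbb{M}$, a $\mathbb{P}\restr\alpha$-name, whereas in $\mathbb{N}$ it must be a $\mathbb{P}\restr(\alpha-\theta')$-name over $W'$. Here I would use that $\mathbb{P}\restr\alpha=\mathbb{P}(\eta,\alpha)^V$ factors as the product $\mathbb{P}\restr\theta'\times\mathbb{P}\restr(\alpha-\theta')$ (the two pieces having disjoint domains of successor ordinals), and that the generic $G_{\mathbb{P}\restr\theta'}$ is recoverable inside $W'$ as the projection of $G_{\mathbb{M}_{\theta'}}$. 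Fixing the first product coordinate by $G_{\mathbb{P}\restr\theta'}$ converts each $\mathbb{P}\restr\alpha$-name $q(\alpha)$ into a $\mathbb{P}\restr(\alpha-\theta')$-name over $W'$, and the forcing statement $\force_{\mathbb{P}\restr\alpha}q(\alpha)\in\Add(\eta^+)$ passes to the corresponding statement over $W'$ through the product decomposition. Reading off the orderings of Definition \ref{Mitchell} coordinatewise then shows that $(p,q)\leq(p',q')$ in $\mathbb{M}$ if and only if the images compare in the iteration, so $\Phi$ is order-preserving and order-reflecting.

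Next I would show that the image of $\Phi$ is dense in $\mathbb{M}_{\theta'}\ast\dot{\mathbb{N}}$. Given a condition $\big((p_0,q_0),\dot{r}\big)$ of the iteration, I would strengthen $(p_0,q_0)$ in $\mathbb{M}_{\theta'}$ so as to decide enough of $\dot{r}$: since $\mathbb{P}(\eta,\theta)$ is $\eta^+$-cc in $W$ and in $W'$ and every relevant name has size $\leq\eta$, a condition of $\mathbb{M}_{\theta'}$ can pin down the upper part to a concrete $(p_1,q_1)^{\theta'}$ whose components are genuine $\mathbb{P}\restr(\alpha-\theta')$-names read back into $W$. Amalgamating the lower and upper parts produces a single condition of $\mathbb{M}$ whose image under $\Phi$ refines $\big((p_0,q_0),\dot{r}\big)$. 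Conversely $\Phi$ is injective on the separative quotient, since distinct amalgamated conditions disagree in some coordinate and the pair of coordinates detects this. Together these facts give a dense embedding of separative quotients, hence the equivalence.

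The main obstacle is exactly the name-reinterpretation of the second paragraph: one must check that fixing the $\mathbb{P}\restr\theta'$-coordinate turns a $\mathbb{P}\restr\alpha$-name for an $\Add(\eta^+)$-condition into a $\mathbb{P}\restr(\alpha-\theta')$-name of the correct form over $W'$, and that every name arising in $\mathbb{N}$ over $W'$ is obtained this way from a name in $W$. This is precisely where the hypothesis that $\mathbb{P}(\eta,\theta)$ is $\eta^+$-cc in both $W$ and $W[G_{\mathbb{M}_{\theta'}}]$ is needed: the chain condition ensures that names of size $\leq\eta$ are not inflated by the product factoring, so the back-and-forth translation of names is well defined and surjective onto the conditions of $\mathbb{N}$. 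With this in hand the two posets have isomorphic separative quotients, and the stated equivalence follows.
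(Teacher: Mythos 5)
Your overall strategy --- splitting each condition of $\mathbb{M}(\eta,\theta,V,W)$ at $\theta'$ and realizing $\mathbb{M}$ as a dense subset of the two-step iteration --- is exactly the route the paper indicates: its proof consists of the single remark that the iteration ``contains a dense set isomorphic to $\mathbb{M}(\eta,\theta,V,W)$,'' with all details deferred to Abraham. The order-preservation part of your argument, and the reinterpretation of a $\mathbb{P}\restr\alpha$-name as a $\mathbb{P}\restr(\alpha-\theta')$-name over $W'$ via the product decomposition, are fine. The genuine gap is in the density step, at the precise point you yourself flag as ``the main obstacle'' and then dismiss with the phrase that the chain condition ``ensures that names are not inflated.'' There is first a mismatch of models that your sketch never confronts: after reinterpretation, the values $q(\alpha)$ of a condition in the image of $\Phi$ are forced into $\Add(\eta^+)$ as computed in $V[\redux{\mathbb{P}\restr\theta'}][\redux{\mathbb{P}\restr(\alpha-\theta')}]$ (this is what Definition \ref{Mitchell} gives), whereas a condition of $\mathbb{N}=\mathbb{M}(\eta,\theta-\theta',V,W')$ only has its values forced into $\Add(\eta^+)^{W'[\redux{\mathbb{P}\restr(\alpha-\theta')}]}$, computed in the larger model. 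With the tail read literally this way, the image of $\Phi$ is \emph{not} dense: take any $c:\eta\to 2$ with $c\in W\setminus V$ (such $c$ exist in the intended applications, where $W$ is a nontrivial generic extension of $V$); then $\check c$ is a legitimate value of a tail condition, but no condition in the image of $\Phi$ can lie below it, since that would put $c$ into $V[G_{\mathbb{P}\restr\alpha}]$ for a filter generic over $W$, and a standard argument (a condition forcing $\dot c=\check c$ over $W$ decides every value of $\dot c$ identically over $V$, so $c$ would be definable in $V$) shows this forces $c\in V$. So density must be argued against the matching ``$V$-side'' reading of the tail, and this discrepancy has to be addressed rather than elided by writing $\Add(\eta^+)$ with no superscript.

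Second, even with the models matched, ``strengthening $(p_0,q_0)$ so as to pin down the upper part'' is not a matter of deciding a name. The values of $\dot r$ are $\mathbb{M}_{\theta'}$-names for objects of the final extension, not for elements of $W$, and they depend on the whole $\mathbb{M}_{\theta'}$-generic, including its $\eta^+$-closed collapsing part; no single condition can simply ``decide'' them. What the density proof actually requires is (i) covering $\dom(\dot r)$ by a set in $W$ of size $\leq\eta$, and (ii) replacing each value of $\dot r$ by a pure $\mathbb{P}\restr\alpha$-name lying in $W$ that is forced to extend it. Both steps rest on the factorization of $\mathbb{M}_{\theta'}$ as a projection of $\mathbb{P}(\eta,\theta')^V\times\mathbb{Q}^*(\eta,\theta',V,W)$ (Lemma \ref{proiezione}), on Easton's Lemma and Lemma \ref{nuovo} to see that every relevant $\eta$-sized object is already captured by the $\mathbb{P}$-part of the generic, and only then on the $\eta^+$-cc of $\mathbb{P}(\eta,\theta)$ in $W$ and in $W'$ to get the covering by ground-model sets. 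Your one-line appeal to the chain condition does not produce these names, so the key density claim remains unproved.
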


\begin{proof} One can prove that $\mathbb{M}_{\theta'}\ast \mathbb{M}(\eta, \theta-\theta', V, W[\redux{\mathbb{M}_{\theta'}}] )$ contains a dense set isomorphic to 
$\mathbb{M}(\eta,\theta, V, W).$ The proof is omitted, for more details see \cite{Abraham} Lemma 2.12. \end{proof}

\begin{remark}\label{Brionia1} Lemma \ref{proiezione} and Lemma \ref{dirclosed}, can be generalized in the following way. Assume that $\theta'<\theta,$ 
$\mathbb{P}:= \mathbb{P}(\eta, \theta)^{V}\restr (\theta-\theta'),$ $\mathbb{M}_{\theta'}:= \mathbb{M}(\eta, \theta', V,W)$ and $G_{\mathbb{M}_{\theta'}}\subseteq \mathbb{M}_{\theta'}$ is a generic filter over $W,$ 
define $$\mathbb{Q}^*(\eta, \theta-\theta', V, W[G_{\mathbb{M}_{\theta'}}]):=\{(\emptyset, q);\spazio (\emptyset, q)\in \mathbb{M}(\eta, \theta-\theta', V,W[G_{\mathbb{M}_{\theta'}}]) \}.$$ 
Then,  $\mathbb{M}(\eta, \theta-\theta', V, W[G_{\mathbb{M}_{\theta'}}])$ is a projection of $\mathbb{P}\times \mathbb{Q}^*(\eta, \theta-\theta', V,W[G_{\mathbb{M}_{\theta'}}])$ and 
$\mathbb{Q}^*(\eta, \theta-\theta', V,W[G_{\mathbb{M}_{\theta'}}])$ is $\eta^+$-directed closed in $W[G_{\mathbb{M}_{\theta'}}].$
\end{remark}

\section{Factoring the Main Forcing}\label{bricks2}
 
 In this section $\theta, V, L$ are like in Definition \ref{nuovaiterazione}. We want to analyse the forcing $\mathbb{R}(\aleph_0, \theta, L).$ As we said, that poset is a variation of the forcing defined by Abraham in \cite[Definition 2.14]{Abraham}, we have just to deal with the function $L.$ The proofs of the lemmas presented in this section are very similar to the proofs of the corresponding lemmas in \cite{Abraham}.
 
\begin{remark}\label{key} $(p,q,f)\mapsto (p,q)$ is a projection of $\mathbb{R}(\aleph_0, \theta, L)$ to $\mathbb{M}(\aleph_0, \theta, V, V)$ and for every limit ordinal 
$\alpha<\theta,$ if $L(\alpha)$ is an $\mathbb{R}\restr \alpha$-name for an ordinal and $\mathbb{Q}^*$ is the canonical $\mathbb{R}\restr \alpha$-name for 
$\mathbb{Q}^*(\aleph_1, L(\alpha), V, V[\redux{R\restr {\alpha}}]),$ then  
$$\mathbb{R}\restr {\alpha+1}= \mathbb{R}\restr {\alpha}\ast \mathbb{Q}^*.$$
Indeed, the functions in $\mathbb{M}(\aleph_0,\theta, V, V)$ are not defined on limit ordinals.  
\end{remark}

\begin{lemma}\label{nuovo2} Let $\mathbb{U}(\aleph_0, \theta, L):= \{(\emptyset,q,f);\ (\emptyset,q,f)\in \mathbb{R} \}$ ordered as a subposet of $\mathbb{R}.$ The following hold: 
\begin{enumerate}
\item[(i)] the function $\pi: \mathbb{P}(\aleph_0, \theta)\times \mathbb{U}(\aleph_0, \theta, L)\to \mathbb{R}$ defined by $\pi(p, (\emptyset,q,f))= (p,q,f)$ is a projection;
\item[(ii)] $\mathbb{U}(\aleph_0, \theta, L)$ is $\sigma$-closed.
\end{enumerate}
\end{lemma}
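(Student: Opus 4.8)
The plan is to prove the two statements of Lemma \ref{nuovo2} about the poset $\mathbb{R}= \mathbb{R}(\aleph_0, \theta, L)$ and its ``upper part'' $\mathbb{U}(\aleph_0, \theta, L)$. These are the exact analogues for $\mathbb{R}$ of Lemma \ref{proiezione} and Lemma \ref{dirclosed} (respectively Remark \ref{stracotta}), so the strategy is to mimic those arguments and handle the extra third coordinate $f$ coming from the Laver function $L$.

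For part (i), I would first verify that $\pi$ preserves the ordering and sends the top element to the top element; both are immediate from the definition of the order on the product and on $\mathbb{R}$. The substantive clause of a projection is the lifting property: given $(p, (\emptyset, q, f))$ and a condition $(p', q', f')\leq \pi(p, (\emptyset, q, f))= (p, q, f)$ in $\mathbb{R}$, I must produce a condition below $(p, (\emptyset, q, f))$ in the product whose image under $\pi$ is below $(p', q', f')$. Following the proof of Lemma \ref{proiezione}, I would keep the first coordinate as $p'$ and build a new pair $(q^*, f^*)$ by a case split over the domains: where $q'$ (respectively $f'$) is already defined on a coordinate that $q$ (respectively $f$) is not, I copy $q'$ (respectively $f'$); on the overlapping coordinates I define a name $q^*(\alpha)$ (and similarly $f^*(\alpha)$) that forces $q^*(\alpha)= q'(\alpha)$ below $p'\restr\alpha$ and forces $q^*(\alpha)= q(\alpha)$ against any condition incompatible with $p'\restr\alpha$, exactly as in Lemma \ref{proiezione}. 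This guarantees $\force q^*(\alpha)\leq q(\alpha)$, so that $(\emptyset, q^*, f^*)\leq (\emptyset, q, f)$ in $\mathbb{U}$, while $\pi(p', (\emptyset, q^*, f^*))$ lands in the same equivalence class as $(p', q', f')$.

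For part (ii), that $\mathbb{U}(\aleph_0, \theta, L)$ is $\sigma$-closed, I would take a descending $\omega$-sequence $\langle (\emptyset, q_n, f_n): n<\omega\rangle$ and form the coordinatewise lower bound: set $\dom(q^*)= \bigcup_n \dom(q_n)$ and $\dom(f^*)= \bigcup_n \dom(f_n)$, and on each coordinate $\alpha$ let $q^*(\alpha)$ (respectively $f^*(\alpha)$) be a name forced to be the decreasing limit of the $q_n(\alpha)$ (respectively $f_n(\alpha)$). The domains remain of size $\leq \aleph_n=\aleph_0$ since a countable union of countable sets is countable, and the relevant Add components and the $\mathbb{Q}^*$ components are themselves sufficiently closed (by Lemma \ref{dirclosed} and Remark \ref{Brionia1}) for the limits to exist, so $(\emptyset, q^*, f^*)\in \mathbb{U}$ is a lower bound.

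The main obstacle I expect is not the combinatorics of the domains but the bookkeeping on the $f$-coordinate: the value $f(\alpha)$ lives in $V^{\mathbb{R}\restr\alpha}$ and is a name for a condition in $\mathbb{Q}^*(\aleph_{n+1}^{V[\redux{\mathbb{R}\restr\alpha}]}, L(\alpha), V, V[\redux{\mathbb{R}\restr\alpha}])$, so verifying that the patched or limit names $f^*(\alpha)$ are still legitimate $\mathbb{R}\restr\alpha$-names forced to land in the correct $\mathbb{Q}^*$ requires invoking the $\aleph_{n+1}$-directed closure of $\mathbb{Q}^*$ from Lemma \ref{dirclosed} and Remark \ref{Brionia1} inside the iteration. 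Since this is precisely the point at which $\mathbb{R}$ departs from Abraham's forcing, I would isolate it as the one place needing care and otherwise defer to \cite{Abraham}, as the author signals at the start of \S \ref{bricks2}.
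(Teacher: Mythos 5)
Your part (i) is essentially the paper's argument: the paper also reduces the $q$-coordinate to Lemma \ref{proiezione} and then patches the $f$-coordinate by mixing, with two points you gloss over but which the paper makes explicit: the mixing for $f^*(\alpha)$ must be done relative to the full condition $(p',q',f')\restr\alpha$ in $\mathbb{R}\restr\alpha$ (not merely relative to $p'\restr\alpha$), since $f'(\alpha)\leq f(\alpha)$ is only forced below that condition; and the claim that $(p',q^*,f^*)$ and $(p',q',f')$ are equivalent must be proved by induction on $\alpha$, because the $f$-coordinates are $\mathbb{R}\restr\alpha$-names whose comparison depends on the equivalence of the restrictions below $\alpha$. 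These are repairable imprecisions, not gaps.

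Part (ii), however, has a genuine gap, and it sits exactly at the spot you flag and then defer. Your plan is to take the ``coordinatewise decreasing limit'': let $f^*(\alpha)$ be a name forced to be the limit of the $f_n(\alpha)$. But the ordering on $\mathbb{R}$ only gives, for $m>n$, that $(\emptyset,q_m,f_m)\restr\alpha\force_{\mathbb{R}\restr\alpha} f_m(\alpha)\leq f_n(\alpha)$; the sequence $\langle f_n(\alpha)\rangle_n$ is \emph{not} forced by the trivial condition to be decreasing, so ``the decreasing limit'' of these names does not exist unconditionally, and no appeal to the closure of $\mathbb{Q}^*$ alone can produce it. (Contrast the $q$-coordinate, where the ordering is $\force_{\mathbb{P}\restr\alpha} q_{m}(\alpha)\leq q_n(\alpha)$ outright, which is why the naive limit works there and is exactly Lemma \ref{dirclosed}.) The paper's proof supplies the missing mechanism: it builds the lower bound $(\emptyset,q,f)$ by \emph{recursion on} $\alpha$, maintaining the inductive hypothesis that $(\emptyset,q\restr\alpha,f\restr\alpha)\leq(\emptyset,q_n,f_n)\restr\alpha$ for all $n$; this already-constructed condition then forces $\langle f_m(\alpha)\rangle_m$ to be decreasing, and the $\aleph_2$-closure of $\mathbb{Q}^*(\aleph_1,L(\alpha),V,V[G_\alpha])$ in $V[G_\alpha]$ together with the maximality principle yields a name $f(\alpha)$ which that condition forces to be below every $f_m(\alpha)$. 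This recursion cannot be outsourced to \cite{Abraham}: as you yourself observe, the $f$-coordinate is precisely what is absent from Abraham's forcing, so ``defer to Abraham'' leaves the one genuinely new step of the lemma unproved.
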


\begin{proof} $(i)$ Let $(p', q', f')\leq \pi(p,(\emptyset, q,f)).$ By Lemma \ref{proiezione}, the function 
$(p, (\emptyset, q))\mapsto (p,q)$ is a projection and we can find $(\emptyset, q^*)\leq (\emptyset, q)$ such that
$[(p', q^*)]= [(p',q')].$ We define a function $f^*$ as follows: $\dom(f^*)= \dom(f')$ and for all $\alpha\in \dom(f'),$ if $\alpha\notin \dom(f),$ then $f^*(\alpha):= f'(\alpha);$ if 
 $\alpha\in \dom(f),$ we define $f^*(\alpha)\in V^{\mathbb{R}\restr \alpha}$ such that the following hold: 
 \begin{enumerate}
 \item $(p', q', f')\restr \alpha \force_{\mathbb{R}\restr \alpha} f^*(\alpha)= f'(\alpha),$ 
 \item  if $r\in \mathbb{R}\restr \alpha$ is incompatible with $(p',q',f')\restr \alpha,$ then $r\force_{\mathbb{R}\restr \alpha} f^*(\alpha)= f(\alpha).$
 \end{enumerate}
  Since $(p',q',f')\restr \alpha\force_{\mathbb{R}\restr \alpha} f'(\alpha)\leq f(\alpha),$ then $\force_{\mathbb{R}\restr \alpha} f^*(\alpha)\leq f(\alpha).$
One can prove by induction on $\alpha$ that $[(p^*, q^*, f^*)\restr \alpha]= [(p', q', f')\restr \alpha],$ and we have $(\emptyset, q^*, f^*)\leq (\emptyset, q,f).$\\ 
\indent $(ii)$ Let $\langle (\emptyset, q_n, f_n);\ n<\omega\rangle$ be a decreasing sequence of conditions in $\mathbb{U}(\aleph_0, \theta, L).$ By definition, 
$\langle (\emptyset, q_n);\ n<\omega  \rangle$ is a decreasing sequence of conditions in $\mathbb{Q}^*(\aleph_0, \theta, V, V)$ which is $\sigma$-closed by Lemma \ref{dirclosed}. So there 
is $(\emptyset, q)$ such that $(\emptyset, q)\leq (\emptyset, q_n),$ for every $n<\omega.$ We define a function $f$ with $\dom(f)= \underset{n<\omega}\bigcup \dom(f_n)$ as follows. We define $f\restr \alpha+1$ by induction on $\alpha,$ so that 
$$(\emptyset, q\restr \alpha+1, f\restr \alpha+1)\leq (\emptyset, q_n, f_n)\restr \alpha+1,$$ 
for all $n<\omega.$ Assume $f\restr \alpha$ has been defined. For every $m>n,$ we have 
$$(\emptyset, q_m, f_m)\restr \alpha\force_{\mathbb{R}\restr \alpha} f_m(\alpha)\leq f_n(\alpha),$$
so by the inductive hypothesis we have $(\emptyset, q\restr \alpha, f\restr \alpha)\force f_m(\alpha)\leq f_n(\alpha).$ By Lemma \ref{dirclosed}, if $G_{\alpha}\subseteq \mathbb{R}\restr \alpha$ is a generic filter over $V,$ then $\mathbb{Q}^*(\aleph_1, L(\alpha), V, V[G_{\alpha}])$ is $\aleph_2$-closed in $V[G_{\alpha}].$ It follows that for some $f(\alpha)\in V^{\mathbb{R}\restr \alpha},$ we have 
$$(\emptyset, q\restr \alpha, f\restr \alpha)\force f(\alpha)\leq f_m(\alpha),\textrm{ for every }m<\omega.$$ 
Finally, the condition $(\emptyset, q, f)$ is a lower bound for $\langle (\emptyset, q_n, f_n);\ n<\omega\rangle.$ \end{proof}

\begin{lemma}\label{prova} Assume that $V$ is a model of $\ZFC$ with two supercompact cardinals $\kappa<\lambda,$ and $L: \kappa\to V_\kappa$ is the Laver function. Let
$\mathbb{R}:= \mathbb{R}(\aleph_0, \kappa, L ),$ and let $\dot{\mathbb{M}}$ be the canonical $\mathbb{R}$-name for 
$\mathbb{M}(\aleph_1, \lambda, V, V[\redux{\mathbb{R}}]).$ The following hold:
\begin{enumerate}

\item\label{fede} $\mathbb{R}$ has size $\kappa$ and it is $\kappa$-c.c.; 

\item\label{federica} $\force_{\mathbb{R}} \lambda \textrm{ is inaccessible;}$

\item\label{nuovo3} For every filter $G_{\mathbb{R}}\subseteq \mathbb{R}$ generic over $V,$ if $G_0$ is the projection of $G_{\mathbb{R}}$ to 
$\mathbb{P}_0:= \mathbb{P}(\aleph_0, \kappa),$ then all countable sets of ordinals in $V[G_{\mathbb{R}}]$ are in $V[G_0];$  

\item\label{alefunopreservato} $\mathbb{R}$ preserves $\aleph_1$ and makes $\kappa= \aleph_2=2^{\aleph_0};$

\item\label{saggezza} If $G_{\mathbb{R}}\subseteq \mathbb{R}$ is a generic filter over $V,$ then $\mathbb{P}_1:= \mathbb{P}(\aleph_1, \lambda)^V$ does not introduce new countable subsets to $V[G_\mathbb{R}];$

\item\label{zio} $\force_{\mathbb{R}} \mathbb{P}(\aleph_1, \lambda)^V$ is $\kappa$-c.c. (and even $\kappa$-Knaster). 

 \end{enumerate}
\end{lemma}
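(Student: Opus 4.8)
The plan is to treat the six items in order, leaning on the factorisation of $\mathbb{R}$ supplied by Lemma \ref{nuovo2} and on the projection of $\mathbb{R}$ onto $\mathbb{M}(\aleph_0,\kappa,V,V)$ recorded in Remark \ref{key}. For \ref{fede} I would first check $\vert\mathbb{R}\vert=\kappa$ by the usual counting: $\mathbb{P}(\aleph_0,\kappa)$ has size $\kappa$, and for each $\alpha<\kappa$ there are fewer than $\kappa$ possible values for $q(\alpha)$ and for $f(\alpha)$ (the former a nice $\mathbb{P}\restr\alpha$-name for an element of $\Add(\aleph_1)$, the latter a name for a condition in a poset of size $<\kappa$), so inaccessibility of $\kappa$ gives the bound. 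The chain condition, indeed $\kappa$-Knasterness, is a $\Delta$-system argument exactly as in Proposition \ref{collapse1}(i), now carried out on $\dom(p)\cup\dom(q)\cup\dom(f)$: given $\kappa$ conditions, refine to a $\Delta$-system with countable root $R$ (legitimate since $\alpha^{\aleph_0}<\kappa$ for $\alpha<\kappa$), and, since there are fewer than $\kappa$ possibilities for the restrictions to $R$, thin out to $\kappa$ conditions agreeing on $R$; these are pairwise compatible because off the root the supports are disjoint. Item \ref{federica} is then immediate: $\mathbb{R}$ is $\kappa$-c.c.\ of size $\kappa<\lambda$, hence preserves the regularity of $\lambda$, and since $\lambda$ is a strong limit in $V$ it cannot push $2^\delta$ up to $\lambda$ for any $\delta<\lambda$, so $\lambda$ remains inaccessible.

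For \ref{nuovo3} I would quote Lemma \ref{nuovo2}: $\mathbb{R}$ is a projection of $\mathbb{P}_0\times\mathbb{U}$ with $\mathbb{P}_0$ ccc and $\mathbb{U}$ $\sigma$-closed, so any countable set of ordinals of $V[G_\mathbb{R}]$ lies in $V[G_0\times G_\mathbb{U}]$, and Easton's Lemma (the $<\aleph_1$-distributivity of the $\sigma$-closed factor over the ccc extension) returns it to $V[G_0]$; this is the exact analogue of Lemma \ref{nuovo}. Item \ref{alefunopreservato} follows: a surjection from $\omega$ onto $\omega_1^V$ would be a countable set of ordinals, hence by \ref{nuovo3} an element of $V[G_0]$, which is impossible since $\mathbb{P}_0$ is ccc; and $\kappa=\aleph_2=2^{\aleph_0}$ because $\mathbb{R}$ projects onto $\mathbb{M}(\aleph_0,\kappa,V,V)$, which collapses every cardinal in $(\aleph_1,\kappa)$ and makes $\kappa=\aleph_2=2^{\aleph_0}$ by Proposition \ref{collapse1}(iii), while $\mathbb{R}$ itself preserves $\aleph_1$ and $\kappa$ and adds exactly $\kappa$ Cohen reals through $\mathbb{P}_0$.

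The two genuinely delicate items are \ref{saggezza} and \ref{zio}. For \ref{saggezza} the obstacle is that $\mathbb{P}_1=\mathbb{P}(\aleph_1,\lambda)^V$ is only known to be $\aleph_1$-closed in $V$, not in $V[G_\mathbb{R}]$, so one cannot close countable descending sequences directly inside $V[G_\mathbb{R}]$. My trick would be to commute the forcings: if $H$ is $\mathbb{P}_1$-generic over $V[G_\mathbb{R}]$, then $G_\mathbb{R}\times H$ is $(\mathbb{R}\times\mathbb{P}_1)$-generic over $V$, so equivalently $H$ is $\mathbb{P}_1$-generic over $V$ and $G_\mathbb{R}$ is $\mathbb{R}$-generic over $V[H]$. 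Since $\mathbb{P}_1$ is $\aleph_1$-closed in $V$, the model $V[H]$ has the same $\omega$-sequences of ordinals as $V$; consequently $\mathbb{P}_0$ is still ccc (a finite $\Delta$-system argument, valid in any model) and $\mathbb{U}$ is still $\sigma$-closed in $V[H]$, so re-running the argument of \ref{nuovo3} over $V[H]$ places any countable set of ordinals of $V[G_\mathbb{R}][H]=V[H][G_\mathbb{R}]$ into $V[H][G_0]$. By Easton's Lemma $G_0$ and $H$ are mutually generic, so $V[H][G_0]=V[G_0][H]$, and a last application of Easton's Lemma ($\mathbb{P}_1$ is $<\aleph_1$-distributive over $V[G_0]$) drops the set into $V[G_0]\subseteq V[G_\mathbb{R}]$, which is exactly what \ref{saggezza} asserts.

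Finally \ref{zio}. The point to watch is that in $V[G_\mathbb{R}]$ one has $2^{\aleph_0}=\kappa$, so a crude $\Delta$-system argument for $\mathbb{P}_1$ fails there, the restrictions to a countable root already ranging over $2^{\aleph_0}=\kappa$ values. I would therefore establish $\kappa$-Knasterness in $V$ and transfer it. In $V$, $\mathbb{P}_1$ is $\kappa$-Knaster by $\Delta$-system, because there $2^{\aleph_0}<\kappa$ by inaccessibility; combined with the $\kappa$-Knasterness of $\mathbb{R}$ coming from the proof of \ref{fede}, the product $\mathbb{R}\times\mathbb{P}_1$ is $\kappa$-Knaster, in particular $\kappa$-c.c.\ in $V$. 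The general fact that a $\kappa$-c.c.\ product $\mathbb{R}\times\mathbb{P}_1$ forces $\mathbb{P}_1$ to be $\kappa$-c.c.\ (reductions of a putative $\kappa$-sized antichain of $\mathbb{P}_1$ in $V[G_\mathbb{R}]$ would constitute a $\kappa$-sized antichain of the product) then gives $\force_{\mathbb{R}}\mathbb{P}_1$ is $\kappa$-c.c., and the Knaster strengthening follows from the corresponding preservation of $\kappa$-Knasterness under $\kappa$-c.c.\ forcing. I expect this last transfer, namely getting Knasterness rather than merely the chain condition to survive passage to $V[G_\mathbb{R}]$, to be the main technical nuisance, precisely because it cannot be read off from a $\Delta$-system computation inside the extension and must be organised entirely in $V$.
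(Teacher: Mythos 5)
Items (\ref{fede})--(\ref{alefunopreservato}) of your proposal coincide with the paper's own argument, and your proof of (\ref{saggezza}) is correct: you move $\mathbb{P}_1$ to the bottom, rerun (\ref{nuovo3}) over $V[H]$, and finish with Easton's Lemma over $V$, whereas the paper applies item (5) of Easton's Lemma directly to conclude that $\mathbb{P}_1$ is $<\aleph_1$-distributive over $V[G_0\times H]$ and then quotes (\ref{nuovo3}); same ingredients, slightly different bookkeeping. For (\ref{zio}), your derivation of the $\kappa$-chain condition is correct and genuinely different from the paper's: you prove $\mathbb{P}_1$ is $\kappa$-Knaster in $V$ (where $2^{\aleph_0}<\kappa$), conclude that $\mathbb{R}\times\mathbb{P}_1$ is $\kappa$-c.c.\ in $V$, and transfer, since a $\kappa$-sized antichain of $\mathbb{P}_1$ in $V[G_{\mathbb{R}}]$ can be reduced (by deciding names, incompatibility in $\check{\mathbb{P}}_1$ being absolute) to a $\kappa$-sized antichain of the product. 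This buys the chain condition, which is all the main theorem actually uses, without ever working inside the extension.

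The gap is your final step: the claim that $\kappa$-Knasterness of a ground-model poset is preserved by $\kappa$-c.c.\ forcing is not a standard theorem, and you give no proof of it. The natural attempts fail for exactly the reason you yourself identify: in $V[G_{\mathbb{R}}]$ a $\kappa$-sequence from $\mathbb{P}_1$ is only given by a name, each value decided by one member of an antichain of possibilities, and the Knaster property of $\mathbb{P}_1$ in $V$ applies only to families living in $V$; a linked $\kappa$-sized subfamily chosen in $V$ for one selector need not be selected $\kappa$-often by the generic. So the parenthetical ``and even $\kappa$-Knaster'' remains unproved in your proposal. The paper obtains Knasterness by running the thinning argument inside $V[G_{\mathbb{R}}]$ --- precisely the move you declared impossible --- and what makes it possible is that the conditions $f_{\alpha}$ are themselves elements of $V$: working in $V[G_{\mathbb{R}}]$, apply Fodor's theorem to the regressive map $\alpha\mapsto\sup(h''[\dom(f_{\alpha})]\cap \alpha)$ to freeze the trace below some $\tau$; cover $h^{-1}[\tau]$ by a set $E\in V$ of $V$-size $<\kappa$, using the $\kappa$-c.c.\ of $\mathbb{R}$; then count the possible restrictions $f_{\alpha}\restr E$ \emph{in} $V$, where $\kappa$ is inaccessible, so the fact that $2^{\aleph_0}=\kappa$ in the extension never enters; stabilizing $f_{\alpha}\restr E$ on a stationary set yields pairwise compatible conditions. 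Replacing your appeal to the alleged general preservation fact by this pressing-down/covering argument closes the gap and completes item (\ref{zio}).
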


\begin{proof} (\ref{fede})  The proof is similar to the proof of Lemma \ref{collapse1} (i) and it is omitted.\\
 \indent (\ref{federica}) It follows from the previous claim.\\
 \indent (\ref{nuovo3}) By Lemma \ref{nuovo2}, it is enough to prove that if $G_0\times H\subseteq \mathbb{P}_0\times \mathbb{U}(\aleph_0, \kappa, L)$ is any generic filter over $V,$ then every countable set of ordinals in $V[G_0\times H]$ is already in $V[G_0].$ This is an easy consequence of Easton's Lemma.\\  
 \indent (\ref{alefunopreservato}) Since $\mathbb{P}(\aleph_0, \kappa)$ is c.c.c., Claim \ref{nuovo3} implies that $\aleph_1$ is preserved. Since $\mathbb{R}$ is $\kappa$-c.c., then 
$\kappa$ remains a cardinal after forcing with $\mathbb{R}.$ Moreover, $\mathbb{R}$ projects on 
$\mathbb{M}(\aleph_0, \kappa, V, V)$ that, by Proposition \ref{collapse1}, collapses all the cardinals between $\aleph_1$ and $\kappa$ and adds $\kappa$ many Cohen reals. Therefore $\mathbb{R}$ makes $\kappa= \aleph_2= 2^{\aleph_0}.$ \\
 \indent (\ref{saggezza}) By Lemma \ref{nuovo2}, $\mathbb{R}$ is a projection of $\mathbb{P}_0\times \mathbb{U}_0,$ where 
$\mathbb{P}_0:= \mathbb{P}(\aleph_0, \kappa)$ and $\mathbb{U}:= \mathbb{U}(\aleph_0, \kappa, L).$ By Easton's Lemma 
$\force_{\mathbb{P}_0\times \mathbb{U}} \mathbb{P}_1 \textrm{ is $<\aleph_1$-distributive},$ so no countable sequence of ordinals is added by $\mathbb{P}_1$ to 
$V[G_0\times H],$ where $G_0\subseteq \mathbb{P}_0$ and $H\subseteq \mathbb{U}$ are generic filters over $V$ such that $G_{\mathbb{R}}$ is the projection of $G_0\times H$ to $\mathbb{R}.$ 
Moreover, we proved in Claim \ref{nuovo3}, 
that every countable sequence of ordinals in $V[G_0\times H]$ is already in $V[G_0].$ Since $V[G_0]\subseteq V[G_{\mathbb{R}}],$ this completes the proof.\\   
\indent (\ref{zio}) Let $G_{\mathbb{R}}\subseteq \mathbb{R}$ be a generic filter over $V.$ Work in $V[G_{\mathbb{R}}].$ 
Assume that $\langle f_{\alpha};\ \alpha<\kappa \rangle$ is a sequence of conditions in $\mathbb{\mathbb{P}}_1:= \mathbb{P}(\aleph_1, \lambda)^V.$ Let 
$D:= \underset{\alpha<\kappa}{\bigcup} \dom(f_{\alpha}),$ then there is a bijection $h: D\to \kappa.$ Since every condition of the sequence is a countable function we have, for every $\alpha<\kappa$ of uncountable cofinality $\sup(h''[\dom(f_{\alpha})]\cap \alpha)<\alpha.$ So the function $\alpha\mapsto \sup(h''[\dom(f_{\alpha})]\cap \alpha)$ is regressive. By Fodor's Theorem, there is an ordinal $\tau$ and a stationary set $S\subseteq \kappa$ such that $\sup(h''[\dom(f_{\alpha})]\cap \alpha)=\tau,$ for every $\alpha\in S.$ 
The set $h^{-1}(\tau)$ has size $<\kappa$ in $V[G_{\mathbb{R}}]$ and $\mathbb{R}$ is $\kappa$-c.c., so there is a set $E\in V$ of size $<\kappa$ in $V$ such that 
$h^{-1}(\tau)\subseteq E.$ Since $\kappa$ is inaccessible in $V,$ then we can find in $V[G_{\mathbb{R}}]$ a stationary set $S'\subseteq S$ such that $f_{\alpha}\restr E$ has a fixed value, for every $\alpha\in S'.$ Then the sets in $\{\dom(f_{\alpha})\setminus E;\ \alpha\in S' \}$ can be assumed to be pairwise disjoint, hence $f_{\alpha}\cup f_{\beta}$ is a function for every $\alpha, \beta\in S'.$ \end{proof}

\begin{lemma}\cite[Lemma 2.18]{Abraham}\label{stanca} Assume that $\alpha<\theta$ is a limit ordinal, let $\mathbb{P}:= \mathbb{P}(\aleph_0, \theta)\restr (\theta- \alpha),$ $\mathbb{R}:=\mathbb{R}(\aleph_0, \theta, L)$ and let $G_{\alpha}\subseteq \mathbb{R}\restr \alpha+1$ be a generic filter over $V.$ We define in $V[G_{\alpha+1}]$ the following set: 
$$\mathbb{U}_{\alpha+1}(\aleph_0, \theta, L, G_{\alpha+1}):= \{(0,q,f)\in \mathbb{R}(\aleph_0, \theta, L);\spazio (0,q,f)\restr \alpha+1\in G_{\alpha+1} \}.$$ 
Then $\mathbb{R}/ G_{\alpha+1}$ is a projection of $\mathbb{P}\times \mathbb{U}_{\alpha+1}(\aleph_0, \theta, L, G_{\alpha+1}),$ and 
$\mathbb{U}_{\alpha+1}(\aleph_0, \theta, L, G_{\alpha+1})$ is $\sigma$-closed in $V[G_{\alpha+1}].$
\end{lemma}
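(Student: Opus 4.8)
The plan is to recognize this as the relativized (post-$G_{\alpha+1}$) analogue of Lemma \ref{nuovo2}, and to rerun that lemma's two arguments with every coordinate restricted to the interval above $\alpha$ and every name read off in $V[G_{\alpha+1}]$. Write a condition of $\mathbb{R}/G_{\alpha+1}$ as $(p,q,f)$ with $(p,q,f)\restr \alpha+1\in G_{\alpha+1}$, let $\mathbb{P}$ range over the Cohen part $\mathbb{P}(\aleph_0,\theta)\restr(\theta-\alpha)$ living above $\alpha$, and define $\pi(p,(0,q,f)):=(p,q,f)$. Since a condition of $\mathbb{U}_{\alpha+1}$ already satisfies $(0,q,f)\restr \alpha+1\in G_{\alpha+1}$ and $p$ is supported above $\alpha$, the image genuinely lands in $\mathbb{R}/G_{\alpha+1}$; monotonicity and preservation of the identity are immediate from the ordering of $\mathbb{R}$ in Definition \ref{nuovaiterazione}.

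For the projection property (item (3) of the definition of projection), I would take $(p',q',f')\leq (p,q,f)$ in $\mathbb{R}/G_{\alpha+1}$ and build a lower bound in the product. First apply the argument of Lemma \ref{proiezione} to the $q$-coordinate, producing $q^*$ with $\dom(q^*)=\dom(q')$, $\force q^*(\beta)\leq q(\beta)$ for $\beta\in\dom(q)$, and $[(p',q^*)]=[(p',q')]$; this touches only coordinates above $\alpha$, so the restriction to $\alpha+1$ is unchanged and stays in $G_{\alpha+1}$. Then construct $f^*$ by induction on the limit ordinals $\beta\in\dom(f')$ above $\alpha$, exactly as in Lemma \ref{nuovo2}(i): on the part of the extension compatible with $(p',q',f')\restr\beta$ force $f^*(\beta)=f'(\beta)$, and elsewhere force $f^*(\beta)=f(\beta)$, which guarantees $\force_{\mathbb{R}\restr\beta} f^*(\beta)\leq f(\beta)$ together with $[(p',q^*,f^*)\restr\beta]=[(p',q',f')\restr\beta]$. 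The triple $(0,q^*,f^*)$ then lies in $\mathbb{U}_{\alpha+1}$ (its restriction to $\alpha+1$ agrees with that of $(0,q,f)$, hence is in $G_{\alpha+1}$), and $(p',(0,q^*,f^*))\leq(p,(0,q,f))$ projects to a condition equivalent to $(p',q',f')$.

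For $\sigma$-closure in $V[G_{\alpha+1}]$, I would take a descending sequence $\langle(0,q_n,f_n):n<\omega\rangle$ in $\mathbb{U}_{\alpha+1}$ and produce a lower bound as in Lemma \ref{nuovo2}(ii). Since $\langle(0,q_n):n<\omega\rangle$ is descending in a forcing that is $\aleph_1$-directed closed by Lemma \ref{dirclosed}, fix $q$ below every $q_n$. Then define $f$ on $\bigcup_{n<\omega}\dom(f_n)$ by induction on limit ordinals $\beta>\alpha$: assuming $(0,q\restr\beta,f\restr\beta)\force f_m(\beta)\leq f_n(\beta)$ for $m>n$, the names $f_n(\beta)$ denote conditions of $\mathbb{Q}^*(\aleph_1,L(\beta),V,V[\redux{\mathbb{R}\restr\beta}])$, which is $\aleph_2$-closed (hence $\sigma$-closed) by Lemma \ref{dirclosed}, so some $\mathbb{R}\restr\beta$-name $f(\beta)$ is forced below every $f_n(\beta)$. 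The resulting $(0,q,f)$ is the desired lower bound, and again its restriction to $\alpha+1$ lies in $G_{\alpha+1}$.

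The point demanding care is the bookkeeping of the relativization: each name $q^*(\beta),f^*(\beta)$ (respectively $f(\beta)$) must be checked to be a legitimate $\mathbb{P}\restr\beta$- (resp.\ $\mathbb{R}\restr\beta$-) name, and one must verify that restricting any constructed condition to $\alpha+1$ never leaves $G_{\alpha+1}$, so that the conditions genuinely belong to $\mathbb{U}_{\alpha+1}$ and $\mathbb{R}/G_{\alpha+1}$. Because the definition of $\mathbb{R}$ at a coordinate $\beta$ refers only to the name $\mathbb{Q}^*(\aleph_1,L(\beta),V,V[\redux{\mathbb{R}\restr\beta}])$, the entire construction above $\alpha+1$ is insensitive to having already fixed the generic below $\alpha+1$; this is exactly why the argument reduces to that of Lemma \ref{nuovo2}, and the remaining details are those of \cite[Lemma 2.18]{Abraham}.
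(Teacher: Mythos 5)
Your overall architecture --- treat $\mathbb{U}_{\alpha+1}$ as the term part, rerun the mixing construction of Lemmas \ref{proiezione} and \ref{nuovo2} for the projection, and amalgamate coordinatewise for closure --- is the intended shape of the argument (the paper itself omits the proof and defers to Abraham's Lemma 2.18). But there is a genuine gap at the crux of the lemma, namely the $\sigma$-closure claim. You take a descending sequence $\langle(0,q_n,f_n):n<\omega\rangle$ in $\mathbb{U}_{\alpha+1}$ and ``fix $q$ below every $q_n$'' by Lemma \ref{dirclosed}. That lemma asserts that $\mathbb{Q}^*(\aleph_0,\theta,V,V)$ is $\aleph_1$-directed closed \emph{in $V$}, whereas your sequence is only guaranteed to lie in $V[G_{\alpha+1}]$, and $\mathbb{R}\restr \alpha+1$ adds reals, hence adds new $\omega$-sequences of conditions. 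Closure in $V$ does not transfer to such sequences: if $c$ is Cohen-generic over $V$, then $\Add(\aleph_1)^V$ is no longer $\sigma$-closed in $V[c]$, since the finite conditions $c\restr n$ form a descending sequence whose lower bound would put $c$ into $V$. The same objection hits your coordinatewise step for $f(\beta)$: to produce a single name $f(\beta)\in V^{\mathbb{R}\restr \beta}$ forced below every $f_n(\beta)$, one must amalgamate the sequence of names $\langle f_n(\beta):n<\omega\rangle$, and that sequence is in general not an element of $V$, so the amalgamation of Lemma \ref{nuovo2}(ii) cannot be performed in $V$. As written, your argument establishes closure only for descending sequences lying in $V$, which is just Lemma \ref{nuovo2}(ii) again; the entire content of Lemma \ref{stanca} is closure for the new sequences of $V[G_{\alpha+1}]$.

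A correct proof has to engage with exactly this point: first one uses the factorization of $\mathbb{R}\restr \alpha+1$ itself as a projection of a ccc forcing times a forcing $\sigma$-closed in $V$, together with Easton's Lemma, to see that any $\omega$-sequence of elements of $V$ lying in $V[G_{\alpha+1}]$ already lies in the extension by the Cohen coordinates below $\alpha$; then, since for $\beta>\alpha$ the forcings $\mathbb{P}\restr \beta$ and $\mathbb{R}\restr \beta$ project onto the forcing below $\alpha$, one can define \emph{in $V$} names that read the sequence off the generic and output a lower bound, the resulting inequalities being forced not by the trivial condition but by conditions compatible with $G_{\alpha+1}$ --- that is, one obtains lower bounds in the sense of the relativized ordering $\leq^*$ (equivalently, in the separative quotient), which is also the sense in which the lemma is applied later. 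A milder symptom of the same issue appears in your projection argument: the Cohen part $p'$ of a condition of $\mathbb{R}/G_{\alpha+1}$ need not belong to $\mathbb{P}=\mathbb{P}(\aleph_0,\theta)\restr(\theta-\alpha)$ (it may have support below $\alpha$), and the mixing defining $q^*,f^*$ does touch coordinates $\leq\alpha$; both points are handled by passing to $p'\restr(\theta-\alpha)$ and invoking upward closure of $G_{\alpha+1}$ (the restriction of the constructed condition lies above $(p',q',f')\restr \alpha+1\in G_{\alpha+1}$ --- not, as you claim, because it ``agrees with that of $(0,q,f)$''). These relativization steps are where the lemma actually lives; they cannot be deferred as bookkeeping inherited from Lemma \ref{nuovo2}.
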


\begin{proof} The proof is very similar to the proof of Lemma 2.18 in \cite{Abraham} and it is omitted. \end{proof}

  

\section{The Main Theorem}\label{theorem}

\begin{theorem} Assume that $V$ is a model of $\ZFC$ with two supercompact cardinals $\kappa<\lambda,$ and suppose that $L: \kappa\to V_\kappa$ is the Laver function. If 
$\mathbb{R}:= \mathbb{R}(\aleph_0, \kappa, L ),$ and $\dot{\mathbb{M}}$ is the canonical $\mathbb{R}$-name for 
$\mathbb{M}(\aleph_1, \lambda, V, V[\redux{\mathbb{R}}]),$ then for every filter $G\subseteq \mathbb{R}\ast \dot{\mathbb{M}}$ generic over $V,$ both $\aleph_2$ and $\aleph_3$ satisfy the super tree property in $V[G].$  
\end{theorem}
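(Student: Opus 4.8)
The plan is to prove the two conclusions separately, after recording the cardinal structure of $U := V[G]$. Write $W := V[G_{\mathbb{R}}]$, so that $U = W[G_{\mathbb{M}}]$ with $\mathbb{M} := \mathbb{M}(\aleph_1,\lambda,V,W)$. By Lemma~\ref{prova}, $\mathbb{R}$ preserves $\aleph_1$, makes $\kappa=\aleph_2=2^{\aleph_0}$, and keeps $\lambda$ inaccessible in $W$, while $\mathbb{P}(\aleph_1,\lambda)^V$ is $\kappa$-c.c.\ and adds no reals over $W$. By Proposition~\ref{collapse1}, $\mathbb{M}$ then preserves $\aleph_2=\kappa$ and turns $\lambda$ into $\aleph_3=2^{\aleph_1}$. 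Hence in $U$ we have $\aleph_2=\kappa$, $\aleph_3=\lambda$, $2^{\aleph_0}=\aleph_2$ and $2^{\aleph_1}=\aleph_3$; these are exactly the values the preservation theorems will need.

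For $(\aleph_2,\mu)$-\ITP, fix a $(\kappa,\mu)$-tree $F$ and an $F$-level sequence $D$ in $U$. By Lemma~\ref{proiezione}, $\mathbb{M}$ is a projection of $\mathbb{P}_1\times\mathbb{Q}^*_1$, where $\mathbb{P}_1:=\mathbb{P}(\aleph_1,\lambda)^V$ and $\mathbb{Q}^*_1:=\mathbb{Q}^*(\aleph_1,\lambda,V,W)$ is $\aleph_2$-directed closed (Lemma~\ref{dirclosed}); fix $G_{\mathbb{P}_1}\times G_{\mathbb{Q}^*_1}$ projecting to $G_{\mathbb{M}}$, so $U\subseteq W[G_{\mathbb{P}_1}][G_{\mathbb{Q}^*_1}]$ and, by Lemma~\ref{provaa}(\ref{S}), the quotient $\mathbb{S}$ is $\aleph_1$-closed. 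I would first build an ineffable branch in $W[G_{\mathbb{P}_1}][G_{\mathbb{Q}^*_1}]$ from a single lifted embedding. Using the Laver function, choose $j\colon V\to M$ with critical point $\kappa$, with ${}^{\theta}M\subseteq M$ for some $\theta\ge\mu,\lambda$, $j(\kappa)>\mu$ and $j(L)(\kappa)=\lambda$. The crucial point is Remark~\ref{key}: since $j(L)(\kappa)=\lambda$, the stage-$\kappa$ anticipation of $j(\mathbb{R})$ is precisely $\mathbb{Q}^*_1$, so $j(\mathbb{R})\restr(\kappa+1)=\mathbb{R}\ast\mathbb{Q}^*_1$ and $G_{\mathbb{Q}^*_1}$ already supplies the generic at that stage. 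As every condition of $\mathbb{R}$ lies in $V_\kappa$, $j$ fixes $\mathbb{R}$ pointwise and $j[G_{\mathbb{R}}]=G_{\mathbb{R}}$; I would then add an $M$-generic for the tail $j(\mathbb{R})/(G_{\mathbb{R}}\ast G_{\mathbb{Q}^*_1})$, which is $\sigma$-closed by Lemmas~\ref{nuovo2} and \ref{stanca}, obtaining a lift $j'\colon W[G_{\mathbb{Q}^*_1}]\to M'$. Finally I would lift through the Cohen factor: $j'(\mathbb{P}_1)=\mathbb{P}(\aleph_1,j(\lambda))$ factors (on successor coordinates) as $\mathbb{P}(\aleph_1,j[\lambda])\times\mathbb{P}(\aleph_1,\cdot)$, and since $j'\restr\lambda\in M'$ the set $j'[G_{\mathbb{P}_1}]$ is generic for the first factor, so some $M'$-generic $H$ contains it and gives $\bar\jmath\colon W[G_{\mathbb{Q}^*_1}][G_{\mathbb{P}_1}]\to M''$. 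Setting $d:=\bar\jmath(D)(j[\mu])\in\lev_{j[\mu]}(\bar\jmath(F))$ and $b(\gamma):=d(j(\gamma))$, elementarity together with ${}^{\mu}M\subseteq M$ yields that $b$ is an ineffable branch for $D$ in $W[G_{\mathbb{P}_1}][G_{\mathbb{Q}^*_1}]$.

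To return to $U$, note $W[G_{\mathbb{P}_1}][G_{\mathbb{Q}^*_1}]=U[G_{\mathbb{S}}]$ with $\mathbb{S}$ $\aleph_1$-closed and $2^{\aleph_0}=\aleph_2$; the First Preservation Theorem~\ref{closed} (with $\eta=\aleph_0$) shows $\mathbb{S}$ adds no cofinal branch to the $(\aleph_2,\mu)$-tree $F$, so by Corollary~\ref{primo} the sequence $D$ already has an ineffable branch in $U$. (The Second Preservation Theorem~\ref{mafalda}, with $n=1$ and $\mathbb{P}_1\subseteq\Add(\aleph_1,\lambda)^V$, gives the parallel statement for the Cohen factor.) For $(\aleph_3,\mu')$-\ITP I would argue in Weiss's style: $\lambda$ stays supercompact in $W$ (as $|\mathbb{R}|=\kappa<\lambda$), so fix $j\colon W\to N$ with critical point $\lambda$, ${}^{\mu'}N\subseteq N$ and $j(\lambda)>\mu'$. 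Since $\mathbb{M}\in H_\lambda^W$ and its conditions have size $\le\aleph_1$, $j$ fixes $\mathbb{M}$ pointwise and $j[G_{\mathbb{M}}]=G_{\mathbb{M}}$; by Lemma~\ref{Abraham}, $j(\mathbb{M})\equiv\mathbb{M}\ast\mathbb{M}(\aleph_1,j\lambda-\lambda,V,N[\redux{\mathbb{M}}])$, and by Remark~\ref{Brionia1} the tail is a projection of a Cohen factor $\mathbb{P}(\aleph_1,\cdot)$ and an $\aleph_2$-directed closed factor $\mathbb{Q}^*$. Building a generic $G_{\mathrm{tail}}$ over $U$ and lifting to $j^*\colon U\to N[G_{\mathbb{M}}][G_{\mathrm{tail}}]$, I would read off $d:=j^*(D)(j[\mu'])$ and $b(\gamma):=d(j(\gamma))$ exactly as before; the point $j[\mu']$ witnesses that $\{X:b\restr X=D(X)\}$ is stationary, so $b$ is an ineffable branch for $D$.

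That $b$ lies in $U$ is where the bookkeeping differs from the $\aleph_2$ case. The closed factor $\mathbb{Q}^*$ adds no cofinal branch to the $(\aleph_3,\mu')$-tree $F$ by the First Preservation Theorem~\ref{closed} with $\eta=\aleph_1$ (here $2^{\aleph_1}=\aleph_3$), so $b$ descends to the Cohen extension of $U$; the Cohen factor $\mathbb{P}(\aleph_1,\cdot)$ adds no cofinal branch to a $(\aleph_3,\mu')$-tree because its square is $\aleph_3$-c.c.\ (as $2^{\aleph_0}=\aleph_2$), the branch lemma appropriate to this cardinal. Since the tail preserves stationary subsets of $[\mu']^{<\lambda}$, the branch $b$ and its guessing set are already in $U$, completing this case. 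The step I expect to be the real obstacle is the $\aleph_2$ case, namely the danger flagged in \S\ref{mainforcing} that the second forcing $\mathbb{M}$ introduces an $(\aleph_2,\mu)$-tree with no ineffable branch. Everything there hinges on the equality $j(\mathbb{R})\restr(\kappa+1)=\mathbb{R}\ast\mathbb{Q}^*_1$ forced by $j(L)(\kappa)=\lambda$: this alignment of the anticipation in $\mathbb{R}$ with the $\aleph_2$-directed closed factor of $\mathbb{M}$ is exactly what lets the embedding be lifted through the part of $\mathbb{M}$ that could otherwise destroy \ITP, leaving only the $\aleph_1$-Cohen factor and the $\aleph_1$-closed quotient, which the preservation theorems control. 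The fact that the two cardinals require genuinely different branch lemmas, a covering argument (Second Preservation Theorem) at $\aleph_2$ versus an $\aleph_3$-c.c.\ square argument at $\aleph_3$, with the quotient closed only up to $\aleph_1$, is the main technical subtlety.
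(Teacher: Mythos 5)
Your reduction of the $\aleph_2$ case --- produce an ineffable branch for $D$ in $W[G_{\mathbb{P}_1}\times G_{\mathbb{Q}^*_1}]=U[G_{\mathbb{S}}]$ and pull it into $U$ by Corollary \ref{primo} applied to the $\sigma$-closed quotient $\mathbb{S}$ --- is sound, and you correctly isolate the role of $j(L)(\kappa)=\lambda$. The gap is the one sentence that does all the work: ``elementarity together with ${}^{\mu}M\subseteq M$ yields that $b$ is an ineffable branch for $D$ in $W[G_{\mathbb{P}_1}][G_{\mathbb{Q}^*_1}]$.'' The function $b$ is read off from $\bar{\jmath}(D)(j''[\mu])$, which lives in the \emph{target} model of the lifted embedding; that target is a generic extension of $M$ by the tail of $j(\mathbb{R})$, by $j(\mathbb{Q}^*_1)$ below a master condition, and by $j(\mathbb{P}_1)$, and these generic filters are obtained by forcing over $V[G_{\mathbb{R}}]$ and its extensions --- they do not exist inside $W[G_{\mathbb{P}_1}\times G_{\mathbb{Q}^*_1}]$. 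The closure ${}^{\mu}M\subseteq M$ holds in $V$, does not survive that forcing, and at best places $b$ in the target model, never in $W[G_{\mathbb{P}_1}\times G_{\mathbb{Q}^*_1}]$. Proving that $b$ lands in the small model is exactly the hardest part of the paper's proof (the claim $b\in\mathscr{N}_1$ in part (3)): a chain of applications of Theorem \ref{closed} and Theorem \ref{secondo} to the successive quotient forcings (the $\sigma$-closed $\mathbb{S}$; then $\mathbb{P}_{tail}$ via a covering argument; then the $\sigma$-closed and Cohen parts of the tail of $j(\mathbb{R})$ given by Lemma \ref{stanca}; then the final quotient of $j(\mathbb{M})$), together with the device of replacing $F$ by the $(\aleph_2,\mu)$- and $(\aleph_1,\mu)$-trees it covers after the intermediate collapses. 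Nothing in your text replaces this. Two further errors occur in the same passage: forcing with $j(\mathbb{R})/(G_{\mathbb{R}}\ast G_{\mathbb{Q}^*_1})$ lifts $j$ only to domain $W$, not $W[G_{\mathbb{Q}^*_1}]$ --- extending the domain through $\mathbb{Q}^*_1$ requires the directed-closure master-condition argument (a lower bound of $j''[G_{\mathbb{Q}^*_1}]$ in $j(\mathbb{Q}^*_1)$), which you never carry out --- and that tail is not itself $\sigma$-closed: by Lemma \ref{stanca} it is only a \emph{projection} of a product of a Cohen part and a $\sigma$-closed part.

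The $\aleph_3$ case fails for a different, equally concrete reason: your descent runs the two factors in an order that cannot be justified. You remove the closed factor first, over the Cohen extension of $U$; but $\mathbb{Q}^*$ is $\aleph_2$-closed only in $N[G_{\mathbb{M}}]$, and $\mathbb{P}(\aleph_1,\cdot)$ adds new $\omega_1$-sequences --- indeed, since $2^{\aleph_0}=\aleph_2$ in $U$, it collapses $\aleph_2$ to $\aleph_1$ and is not $\aleph_2$-c.c., so neither closure nor any Easton-style distributivity is available for $\mathbb{Q}^*$ over that extension, and Theorem \ref{closed} cannot be applied there. This is precisely why the paper runs the factors the other way: Corollary \ref{primo} is applied to $\mathbb{Q}^*$ over $N[G]$, where the closure hypothesis does hold, and the Cohen factor is then removed on top by the covering-based Theorem \ref{secondo}. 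The covering theorem is unavoidable at that point, because $H_{\mathbb{Q}^*}$ has collapsed $\lambda$ to size $\aleph_2$, so $F$ is only an $(\aleph_2,\mu)$-tree, and no chain-condition argument for $\mathbb{P}(\aleph_1,\cdot)$ is available at $\aleph_2$ when CH fails. Your proposed substitute --- ``the square of $\mathbb{P}(\aleph_1,\cdot)$ is $\aleph_3$-c.c.'' --- invokes a branch lemma for $(\aleph_3,\mu')$-trees that the paper neither states nor proves, and in the configuration your ordering forces you into, the tree to which it would have to be applied is no longer of that type. Finally, ``the tail preserves stationary subsets of $[\mu']^{<\lambda}$'' is asserted rather than proved, and it is not how ineffability is transferred in the paper: there the stationarity of the guessing set is obtained by the elementarity argument applied to clubs lying in the domain of the lifted embedding, which presupposes that $b$ has already been shown to belong to that domain.
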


The proof that the model obtained is as required consists of three parts:
\begin{enumerate}
\item $V[G]\models \aleph_1^V= \aleph_1,\ \kappa= \aleph_2=2^{\aleph_0}\textrm{ and }\lambda=\aleph_3= 2^{\aleph_1};$
\item $\aleph_3$ satisfies the super tree property.
\item $\aleph_2$ satisfies the super tree property;
\end{enumerate}

\section*{Proof of (1)}

First we show that $\aleph_1$ is preserved.
Let $G_{\mathbb{R}}$ be the projection of $G$ to $\mathbb{R}$ and let $G_{\mathbb{M}}$ be the projection of $G$ to $\mathbb{M}:= \dot{\mathbb{M}}^{G_{\mathbb{R}}}.$ By Lemma \ref{prova}, 
$\aleph_1$ is preserved by $\mathbb{R}.$ Moreover, $\mathbb{P}(\aleph_1, \lambda)^V$ does not introduce 
new countable subsets to $V[G_{\mathbb{R}}]$ (see Lemma \ref{prova} (\ref{saggezza})). So, by Lemma \ref{provaa} (\ref{stracotta1}) $\mathbb{M}$ does not introduce new countable sequences, hence $\aleph_1$ remains a cardinal in $V[G].$
Now, we show that $\kappa$ remains a cardinal in $V[G].$ By Lemma \ref{prova}, we know that $\kappa$ remains a cardinal in $V[G_{\mathbb{R}}]$ and becomes $\aleph_2.$ 
By Lemma \ref{prova} (\ref{zio}), $\mathbb{P}(\aleph_1, \lambda)^V$ is $\kappa$-c.c. in $V[G_{\mathbb{R}}],$ so $\kappa$ remains a cardinal after forcing with 
$\mathbb{P}(\aleph_1, \lambda)^V$ over $V[G_{\mathbb{R}}]$ and it is equal to $\aleph_2.$ By applying Lemma \ref{nuovo}, we get that all sets of ordinals
in $V[G]$ of cardinality $\aleph_1$ are in $V[G_{\mathbb{R}}][G_{\mathbb{P}}],$ where $G_{\mathbb{P}}$ is the projection of $G_{\mathbb{M}}$ to 
$\mathbb{P}(\aleph_1, \lambda)^V.$ Therefore, $\kappa$ remains a cardinal in $V[G].$
Finally, $\lambda$ remains a cardinal because $\mathbb{R}\ast \dot{\mathbb{M}}$ is $\lambda$-c.c., and it becomes $\aleph_3.$

\section*{Proof of 2}  

By (1), we know that $\lambda=\aleph_3$ in $V[G],$ so we want to prove that $\lambda$ has the super tree property in that model. Let $\mu\geq \lambda$ be any ordinal, and assume towards a contradiction that in $V[G]$ there is a $(\lambda, \mu)$-tree $F$ and an $F$-level sequence $D$ with no ineffable branches. 
Fix an elementary embedding $j: V\to N$ with critical point $\lambda$ such that:
\begin{enumerate}
\item if $\sigma:= \vert \mu\vert^{<\lambda},$ then $j(\lambda)>\sigma,$
\item $^{\sigma}N\subseteq N.$
\end{enumerate} 

\begin{claim} We can lift $j$ to an elementary embedding $j^*: V[G]\to N[H],$ with $H\subseteq j(\mathbb{R}\ast \dot{\mathbb{M}})$ generic over $N.$
\end{claim}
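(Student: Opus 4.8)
The plan is to lift $j$ in two stages, matching the two-step iteration $\mathbb{R}\ast\dot{\mathbb{M}}$. I would first lift $j$ through the first-stage forcing $\mathbb{R}=\mathbb{R}(\aleph_0,\kappa,L)$ and then through the second-stage forcing $\mathbb{M}=\mathbb{M}(\aleph_1,\lambda,V,V[G_\mathbb{R}])$. Throughout I would use the Silver Lemma (the last lemma of the Preliminaries): to lift $j:V\to N$ through a forcing $\mathbb{Q}\in V$, it suffices to exhibit a filter $H'$ that is $j(\mathbb{Q})$-generic over $N$ with $j''[G_\mathbb{Q}]\subseteq H'$, and then $j$ extends uniquely to $j^*$ on the extension with $j^*(G_\mathbb{Q})=H'$.

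\emph{Stage one.} Since the critical point of $j$ is $\lambda$ and $\kappa<\lambda$, the embedding fixes $\mathbb{R}$ pointwise: $j(\mathbb{R})=\mathbb{R}$ and $j\restr\mathbb{R}=\mathrm{id}$, because $\mathbb{R}$ has size $\kappa<\lambda$ (Lemma \ref{prova} (\ref{fede})) so all its elements are below the critical point. Thus $j''[G_\mathbb{R}]=G_\mathbb{R}$ and I may simply take $H_0:=G_\mathbb{R}$, which is trivially $j(\mathbb{R})=\mathbb{R}$-generic over $N$ (using that $N$ is closed under $\sigma$-sequences and hence computes genericity over $\mathbb{R}$ correctly, as $\mathbb{R}$ and its maximal antichains lie in $V_\lambda\subseteq N$). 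By Silver's Lemma this lifts $j$ to $j_1:V[G_\mathbb{R}]\to N[G_\mathbb{R}]$ with $j_1(G_\mathbb{R})=G_\mathbb{R}$.

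\emph{Stage two.} Now I must lift $j_1$ through $\mathbb{M}=\mathbb{M}(\aleph_1,\lambda,V,V[G_\mathbb{R}])$, whose image $j_1(\mathbb{M})$ is a Mitchell-style poset built up to $j(\lambda)>\sigma$ over $N[G_\mathbb{R}]$. The key use of the Laver function is here: $\lambda$ is supercompact in $V$ and $L$ is the Laver function for $\lambda$ (or, more precisely, $j$ is chosen so that $j(\text{the relevant Laver data})(\lambda)$ anticipates the second-stage forcing), which guarantees that $j_1(\mathbb{M})$ factors as $\mathbb{M}$ followed by a tail forcing, and crucially that the initial segment $j_1(\mathbb{M})\restr\lambda$ agrees with $\mathbb{M}$ (up to the factoring in Lemma \ref{Abraham} and Remark \ref{Brionia1}). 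To build the required generic, I would take $H$ for the tail of $j_1(\mathbb{M})$ above stage $\lambda$ and combine it with $G_\mathbb{M}$; the master condition argument is that $j_1''[G_\mathbb{M}]$ has size $\sigma$ and, since the tail forcing is sufficiently closed in $N[G_\mathbb{R}]$ (which is closed under $\sigma$-sequences by hypothesis (2) on $j$), one can find a single lower bound, a master condition $m$, below $j_1''[G_\mathbb{M}]$ in the tail. Forcing below $m$ produces a generic $H$ with $j_1''[G_\mathbb{M}]\subseteq H$, and Silver's Lemma delivers $j^*:V[G_\mathbb{R}][G_\mathbb{M}]\to N[G_\mathbb{R}][H]$, i.e. $j^*:V[G]\to N[H]$ with $H\subseteq j(\mathbb{R}\ast\dot{\mathbb{M}})$ generic over $N$, as desired.

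\emph{The main obstacle} will be the construction of the master condition $m$ in Stage two and verifying the closure needed to form it. The difficulty is that $\mathbb{M}$ is a two-coordinate poset whose second coordinate consists of names for conditions in an Add-type forcing, so assembling a lower bound for $j_1''[G_\mathbb{M}]$ requires that the tail of $j_1(\mathbb{M})$ be $\sigma^+$-directed-closed (or at least closed enough to meet $\sigma$ many dense sets and to bound a directed set of size $\sigma$) in $N[G_\mathbb{R}]$. This is where the hypotheses $j(\lambda)>\sigma=|\mu|^{<\lambda}$ and ${}^\sigma N\subseteq N$ are exactly calibrated: the closure of $N$ ensures $j_1''[G_\mathbb{M}]\in N[G_\mathbb{R}]$ and that the directed-closure of the tail forcing (Lemma \ref{dirclosed} and Remark \ref{Brionia1}, pushed through $j_1$) suffices to bound it. Once $m$ is obtained, genericity of $H$ below $m$ over $N[G_\mathbb{R}]$ follows from a standard argument, and the remainder is the routine Silver-Lemma bookkeeping.
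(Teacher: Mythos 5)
Your Stage one coincides with the paper's argument: since $\vert \mathbb{R}\vert=\kappa<\lambda=\mathrm{crit}(j)$, indeed $j(\mathbb{R})=\mathbb{R}$ and the lift $j_1\colon V[G_{\mathbb{R}}]\to N[G_{\mathbb{R}}]$ is immediate. In Stage two, note first a misattribution: $L$ is the Laver function for $\kappa$, not for $\lambda$, and it plays no role in this claim; it is needed only in the proof of the super tree property at $\aleph_2$ (Claim \ref{firstclaim}), to arrange $j(L)(\kappa)=\lambda$. The identity $j_1(\mathbb{M})\restr \lambda=\mathbb{M}$ requires no anticipation or guessing: it follows from the level-by-level definition of Mitchell forcing, $\mathrm{crit}(j_1)=\lambda$, and ${}^{\sigma}N\subseteq N$, which gives $\mathbb{M}(\aleph_1,\lambda,N,N[G_{\mathbb{R}}])=\mathbb{M}(\aleph_1,\lambda,V,V[G_{\mathbb{R}}])$. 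That slip is harmless, since the conclusion you draw from it is true.

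The genuine gap is the master condition argument, which cannot work here. Since $\mathrm{crit}(j_1)=\lambda$ and every condition $(p,q)\in\mathbb{M}$ has size $\leq\aleph_1<\lambda$, the image $j_1(p,q)$ has support contained in $\lambda$: $j_1(p)=p$ because $p\in V_\lambda$, and $\dom(j_1(q))=\dom(q)\subseteq\lambda$ because $j_1$ fixes all ordinals below $\lambda$. So $j_1''[G_{\mathbb{M}}]$ lies entirely inside the first factor $j_1(\mathbb{M})\restr\lambda=\mathbb{M}$; there are no nontrivial ``tail parts'' to bound. Worse, a common lower bound $m=(p_m,q_m)$ for $j_1''[G_{\mathbb{M}}]$ in $j_1(\mathbb{M})$ would need $p_m\supseteq p$ for every $(p,q)\in G_{\mathbb{M}}$, i.e.\ $p_m$ would have to extend the $\mathbb{P}(\aleph_1,\lambda)$-generic function $\bigcup\{p:(p,q)\in G_{\mathbb{M}}\}$, a function of size $\lambda$; no countable condition can do this, so no master condition exists and ``forcing below $m$'' is meaningless. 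Master conditions are the right tool when the pointwise image of the generic has size below the closure of the target forcing; that is exactly the situation in the proof for $\aleph_2$, where $j''[G_{\mathbb{Q}^*}]$ has size $\aleph_1$ after $\lambda$ is collapsed and $j(\mathbb{Q}^*)$ is $\aleph_2$-directed closed (Lemma \ref{dirclosed}) --- you have transplanted that strategy to a claim where it fails. What is needed instead, and what the paper does, is a chain-condition transfer: $\mathbb{M}$ is $\lambda$-c.c.\ in $V[G_{\mathbb{R}}]$ (Proposition \ref{collapse1} and Lemma \ref{prova} (\ref{federica})), so every maximal antichain $A\subseteq\mathbb{M}$ has size $<\lambda$, whence $j_1(A)=j_1''[A]$ is a maximal antichain of $j_1(\mathbb{M})$ contained in $\mathbb{M}$; thus $j_1\restr\mathbb{M}$ is a complete embedding of $\mathbb{M}$ into $j_1(\mathbb{M})$. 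One then forces with the tail of $j_1(\mathbb{M})$ over $V[G]$ (using the factorization of Lemma \ref{Abraham}) to obtain $H_{j(\mathbb{M})}$ with $H_{j(\mathbb{M})}\restr\lambda=G_{\mathbb{M}}$, and $j_1''[G_{\mathbb{M}}]\subseteq H_{j(\mathbb{M})}$ then holds automatically --- no master condition --- so Silver's Lemma gives the lift.
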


\begin{proof} To simplify the notation we will denote all the extensions of $j$ by ``$j$'' also. We let $G_{\mathbb{R}}$ be the projection of $G$ to $\mathbb{R}$ and let $G_{\mathbb{M}}$ be the projection of $G$ to $\mathbb{M}:= \dot{\mathbb{M}}^{G_{\mathbb{R}}}.$

As $\lambda>\kappa$ and $\vert \mathbb{R}\vert = \kappa,$ we have 
$j(\mathbb{R})=\mathbb{R},$ so we can lift $j$ to an elementary embedding $$j: V[G_\mathbb{R}]\to N[G_\mathbb{R}].$$  
Observe that $j(\mathbb{M})\restr \lambda= \mathbb{M}(\aleph_1, \lambda, N, N[G_{\mathbb{R}}])= \mathbb{M}(\aleph_1, \lambda, V, V[G_{\mathbb{R}}])=\mathbb{M}.$ 
Force over $V[G_{\mathbb{R}}]$ to get a $j(\mathbb{M})$-generic filter $H_{j(\mathbb{M})}$ such that $H_{j(\mathbb{M})}\restr \lambda= G_{\mathbb{M}}.$ 
By Lemma \ref{collapse1} and Lemma \ref{prova} (\ref{federica}), $\mathbb{M}$ is $\lambda$-c.c. in $V[G_{\mathbb{R}}],$ so 
$j\restr \mathbb{M}$ is a complete embedding from $\mathbb{M}$ into $j(\mathbb{M}),$ hence we can lift $j$ to an elementary embedding 
$$j: V[G_{\mathbb{R}}][G_{\mathbb{M}}]\to N[G_{\mathbb{R}}][H_{j(\mathbb{M})}].$$ \end{proof} 

Rename $j^*$ by $j.$ We define $\mathscr{N}_1:=N[G]$ and $\mathscr{N}_2:=N[G_{\mathbb{R}}][H_{j(\mathbb{M})}].$ 
In $\mathscr{N}_2,$ $j(F)$ is a $(j(\lambda), j(\mu))$-tree and $j(D)$ is a $j(F)$-level sequence. By the closure of $N,$  
the tree $F$ and the $F$-level sequence $D$ are in $\mathscr{N}_1,$ and there is no ineffable branch for $D$ in $\mathscr{N}_1.$

\begin{claim} In $\mathscr{N}_2,$ there is an ineffable branch $b$ for $D.$
\end{claim}

\begin{proof} Let $a:= j''[\mu],$ clearly $a\in [j(\mu)]^{<j(\lambda)}.$ Consider $f:= j(D)(a),$ let $b:\mu \to 2$ be the function defined by $b(\alpha):= f(j(\alpha)),$ we show that 
$b$ is an ineffable branch for $D.$  
Assume for a contradiction that in $\mathscr{N}_2$ there is a club 
$C\subseteq [\mu]^{<\vert \lambda\vert }\cap \mathscr{N}_1$ such that $b\restr X\neq D(X),$ for all $X\in C.$ Then by elementarity, $$f\restr X\neq j(D)(X),$$ for all 
$X\in j(C).$ But $a\in j(C)$ and $f= j(D)(a),$ so we have a contradiction. 
\end{proof}

Since there is no ineffable branch for $D$ in $\mathscr{N}_1,$ we get a contradiction by proving that forcing with $\mathbb{M}(\aleph_1, j(\lambda)- \lambda, N, N[G])$ over 
$N[G]$ does not add ineffable branches to $D.$ By Remark \ref{Brionia1}, $\mathbb{M}(\aleph_1, j(\lambda)- \lambda, N, N[G])$ is a projection of 
$$\mathbb{P}\times \mathbb{Q}^*(\aleph_1, j(\lambda)- \lambda, N, N[G]),$$
where $\mathbb{P}:= \mathbb{P}(\aleph_1, j(\lambda))^N\restr (j(\lambda)- \lambda),$ and 
$\mathbb{Q}^*:= \mathbb{Q}^*(\aleph_1, j(\lambda)- \lambda, N, N[G])$ is $\aleph_2$-closed in $N[G].$ 
In $N[G],$ we have $\lambda = \aleph_3= 2^{\aleph_1}$ and $F$ is an $(\aleph_3,\mu)$-tree, so we can apply Corollary \ref{primo}, thus $\mathbb{Q}^*$ cannot add ineffable branches to $D.$\\
 
This means that $b\notin N[G][H_{\mathbb{Q}^*}],$ where $H_{\mathbb{Q}^*}$ is the projection of $H_{j(\mathbb{M})}$ to $\mathbb{Q}^*.$ The filter $H_{\mathbb{Q}^*}$ collapses $\lambda$ (which is $\aleph_3^{N[G]}$)to have size $\aleph_2,$ so now $F$ is an $(\aleph_2, \mu)$-tree. Every set $X\subseteq N$ in $N[G][H_{\mathbb{Q}^*}]$ which has size $<\aleph_2$ in $N[G][H_{\mathbb{Q}^*}]$ is covered by a set $Y\in N$ which has size $<\lambda$ in $N.$   
The model $N[G][H_{j(\mathbb{M})}]$ is the result of forcing with $\mathbb{P}$ over $N[G][H_{\mathbb{Q}^*}]$ and $b$ is in $N[G][H_{j(\mathbb{M})}],$ so by Theorem \ref{secondo}, 
$b\in N[G][H_{\mathbb{Q}^*}],$ a contradiction.\\


This completes the proof of (2).

\section*{Proof of 3} 

By (1), we know that $\kappa=\aleph_2$ in $V[G],$ so we want to prove that $\kappa$ has the super tree property in that model. Let $\mu\geq \kappa$ be any ordinal, and assume towards a contradiction that in $V[G]$ there is a $(\kappa, \mu)$-tree $F$ and an $F$-level sequence $D$ with no ineffable branches. Since $L$ is the Laver function, there is an elementary embedding $j: V\to N$ with critical point $\kappa$ such that:
\begin{enumerate}
\item if $\sigma:= \max(\lambda, \vert \mu\vert^{<\kappa}),$ then $j(\kappa)>\sigma,$
\item $^{\sigma}N\subseteq N,$
\item $j(L)(\kappa)= \lambda.$
\end{enumerate} 

\begin{claim}\label{firstclaim} We can lift $j$ to an elementary embedding $j^*: V[G]\to N[H],$ with $H\subseteq j(\mathbb{R}\ast \dot{\mathbb{M}})$ generic over $N.$
\end{claim}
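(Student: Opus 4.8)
The plan is to invoke Silver's lemma: it suffices to produce a single filter $H\subseteq j(\mathbb{R}\ast \dot{\mathbb{M}})=j(\mathbb{R})\ast j(\dot{\mathbb{M}})$ that is generic over $N$ and satisfies $j''[G]\subseteq H$, since the lift $j^*$ is then the canonical one. I would build $H$ inside $V[G]$ in two stages, lifting first through $\mathbb{R}$ and then through $\mathbb{M}$, where $G_{\mathbb{R}}$ and $G_{\mathbb{M}}$ are as usual the projections of $G$.

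First I would lift through $\mathbb{R}$. Since $\mathrm{crit}(j)=\kappa$ and $L\colon\kappa\to V_\kappa$, we have $j(L)\restr\kappa=L$, whence $j(\mathbb{R})\restr\kappa=\mathbb{R}$; moreover every condition of $\mathbb{R}$ may be taken in $V_\kappa$ (bounded support below the inaccessible $\kappa$, small names), so $j$ fixes it and $j''[G_{\mathbb{R}}]=G_{\mathbb{R}}$. I would therefore demand $H_{j(\mathbb{R})}\restr\kappa=G_{\mathbb{R}}$, which already secures $j''[G_{\mathbb{R}}]\subseteq H_{j(\mathbb{R})}$. The decisive point is stage $\kappa$: as $\kappa$ is a limit ordinal and $j(L)(\kappa)=\lambda$, Remark \ref{key} gives $j(\mathbb{R})\restr(\kappa+1)=\mathbb{R}\ast \mathbb{Q}^*(\aleph_1,\lambda,N,N[\redux{\mathbb{R}}])$. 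Because ${}^{\sigma}N\subseteq N$ with $\sigma\geq\lambda$, the models $V$ and $N$ agree below $\lambda$, so this factor equals $\mathbb{Q}^*(\aleph_1,\lambda,V,V[G_{\mathbb{R}}])$, the $\aleph_2$-directed closed part of $\mathbb{M}$. Here the anticipation built into $\mathbb{R}$ pays off: the upward closure $G_{\mathbb{Q}^*}$ of the $q$-parts of $G_{\mathbb{M}}$ is $\mathbb{Q}^*$-generic over $V[G_{\mathbb{R}}]$, and hence over $N[G_{\mathbb{R}}]$ (given a dense $D\subseteq\mathbb{Q}^*$, the set of $(p,q)\in\mathbb{M}$ with $(\emptyset,q)\in D$ is dense in $\mathbb{M}$, so $G_{\mathbb{M}}$ meets it), and I would use $G_{\mathbb{Q}^*}$ as the stage-$\kappa$ generic. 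The tail of $j(\mathbb{R})$ from $\kappa+1$ to $j(\kappa)$ I would then fill in by a master-condition argument over $N[G_{\mathbb{R}}][G_{\mathbb{Q}^*}]$, exploiting the closure of $N$ together with the $\sigma$-closure of its term-forcing factor (the analogue of $\mathbb{U}$ from Lemma \ref{nuovo2}).

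Next I would lift through $\mathbb{M}$. Writing $j(\mathbb{M})=\mathbb{M}(\aleph_1,j(\lambda),N,N[H_{j(\mathbb{R})}])$ and factoring it as in Lemma \ref{Abraham} and Remark \ref{Brionia1} as $\mathbb{M}\ast(\mathrm{tail})$ with $\mathbb{M}=j(\mathbb{M})\restr\lambda$, I would set the bottom part equal to $G_{\mathbb{M}}$ and build a generic for the tail containing the tail parts of $j''[G_{\mathbb{M}}]$. For each $(p,q)\in G_{\mathbb{M}}$ one computes $j(p,q)\restr\lambda=(p\restr\kappa,q\restr\kappa)\in G_{\mathbb{M}}$, so only the coordinates in $[\lambda,j(\lambda))$ require attention. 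The directed-closed ($q$-)coordinates are handled by a master condition drawn from the already-secured $G_{\mathbb{Q}^*}$, which is exactly the reason the preparation anticipated $\mathbb{Q}^*(\aleph_1,\lambda,\ldots)$ at stage $\kappa$.

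The hard part will be the Cohen ($p$-)coordinates, i.e.\ lifting through $\mathbb{P}(\aleph_1,\lambda)$. Since $\mathrm{crit}(j)=\kappa<\lambda$, the embedding genuinely moves these conditions, and the transferred filter $\bigcup j''[G_{\mathbb{P}_1}]$ (where $G_{\mathbb{P}_1}$ is the projection of $G_{\mathbb{M}}$ to $\mathbb{P}(\aleph_1,\lambda)$) is a partial function of size $\lambda$, hence not a condition, so no master condition exists. I would overcome this by a surgery argument in the style of Cummings-Foreman: start from an arbitrary generic for $\mathbb{P}(\aleph_1,j(\lambda))$ over $N[H_{j(\mathbb{R})}]$ extending $G_{\mathbb{P}_1}$ below $\lambda$, then surgically overwrite it on the coordinates of the transferred generic; the point, to be verified by a counting argument using that each condition meets those coordinates in a countable set, is that the modified filter stays generic and now contains $j''[G_{\mathbb{P}_1}]$. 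Splicing the tail generics for the $p$- and $q$-parts together with $G_{\mathbb{M}}$ yields $H_{j(\mathbb{M})}\supseteq j''[G_{\mathbb{M}}]$, and combining this with $H_{j(\mathbb{R})}$ produces the required $H$ and the lift $j^*$.
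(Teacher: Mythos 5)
There is a genuine gap, and it sits exactly at the step your construction leans on most heavily: the claim that the upward closure of the $q$-parts of $G_{\mathbb{M}}$ is $\mathbb{Q}^*$-generic over $V[G_{\mathbb{R}}]$. The map $(p,q)\mapsto (\emptyset,q)$ is not a projection from $\mathbb{M}$ to $\mathbb{Q}^*$: it is not even order-preserving, because $(p,q)\leq (p',q')$ in $\mathbb{M}$ only guarantees $p\restr \alpha\force q(\alpha)\leq q'(\alpha)$, whereas $(\emptyset,q)\leq (\emptyset,q')$ in $\mathbb{Q}^*$ requires this extension to be forced by the \emph{empty} condition. Concretely, $G_{\mathbb{M}}$ can contain conditions $(p,q_1)$ and $(p,q_2)$ that are compatible in $\mathbb{M}$ (their $q$-parts are forced to be compatible below $p$) while $(\emptyset,q_1)$ and $(\emptyset,q_2)$ are incompatible in $\mathbb{Q}^*$ (no single name is forced by \emph{all} conditions to extend both). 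So the set you extract is not a filter at all; your density computation is correct but only shows that the $q$-parts meet every dense subset of $\mathbb{Q}^*$, which is vacuous without directedness. This error then propagates: your master condition for the $q$-coordinates of $j(\mathbb{M})$ needs $j''[G_{\mathbb{Q}^*}]$ to be a directed set of size $\aleph_1$, which again presupposes that $G_{\mathbb{Q}^*}$ is a genuine filter. The paper's fix is precisely what your proposal tries to avoid: it forces \emph{further}, over $V[G]$, with the quotient $\mathbb{S}(\aleph_1,\lambda,V,V[G_{\mathbb{R}}],G_{\mathbb{M}})=(\mathbb{P}(\aleph_1,\lambda)^V\times \mathbb{Q}^*)/G_{\mathbb{M}}$ (Lemma \ref{proiezione}), thereby producing an honest product generic $G_{\mathbb{P}}\times G_{\mathbb{Q}^*}$ whose projection to $\mathbb{M}$ is $G_{\mathbb{M}}$. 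This is legitimate because the claim only asserts that $j$ lifts after further forcing; the later preservation arguments are designed to pull the branch back through these auxiliary extensions.

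A second, lesser divergence: for the Cohen ($p$-)coordinates you reach for a Woodin-style surgery argument, but none is needed and the paper does not use one. Since conditions in $\mathbb{P}:=\mathbb{P}(\aleph_1,\lambda)^V$ are countable and ${\rm crit}(j)=\kappa>\aleph_1$, the map $j\restr \mathbb{P}$ is an isomorphism of $\mathbb{P}$ onto $\mathbb{P}(\aleph_1,j''[\lambda])^N$, and $j(\mathbb{P})$ factors as this poset times $\mathbb{P}(\aleph_1,j(\lambda))\restr (j(\lambda)-j''[\lambda])$; one simply transfers $G_{\mathbb{P}}$ through the isomorphism and forces for the complementary factor over $V[H_{j(\mathbb{R})}]$, which yields $H_{j(\mathbb{P})}\supseteq j''[G_{\mathbb{P}}]$ outright. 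Your surgery plan would at best reprove this with substantially more work (and its genericity-preservation lemma would itself need proof in this setting), so even if it could be pushed through, it is the wrong tool here; the real defect of the proposal remains the missing quotient forcing $\mathbb{S}$.
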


\begin{proof} To simplify the notation we will denote all the extensions of $j$ by ``$j$'' also. Let $G_{\mathbb{R}}$ be the projection of $G$ to $\mathbb{R}$ and let $G_{\mathbb{M}}$ be the projection of $G$ to $\mathbb{M}:= \dot{\mathbb{M}}^{G_{\mathbb{R}}}.$
Observe that  
$j(\mathbb{R})= \mathbb{R}(\aleph_0, j(\kappa), j(L))^N= \mathbb{R}(\aleph_0, j(\kappa), j(L))^V,$ and $j(\mathbb{R})\restr \kappa= \mathbb{R}.$ 
Force over $V$ to get a $j(\mathbb{R})$-generic filter $H_{j(R)}$ such that $H_{j(R)}\restr \kappa= G_{\mathbb{R}}.$ 
By Lemma \ref{prova} (\ref{fede}) $\mathbb{R}$ is $\kappa$-c.c. So $j\restr \mathbb{R}$ is a complete embedding from $\mathbb{R}$ into $j(\mathbb{R}),$ 
hence we can lift $j$ to get an elementary embedding 
$$j: V[G_{\mathbb{R}}]\to N[H_{j(R)}].$$  

By Lemma \ref{proiezione}, in $V[G_{\mathbb{R}}],$ the forcing 
$\mathbb{M}$ is a projection of 
$$\mathbb{P}(\aleph_1, \lambda)^V\times \mathbb{Q}^*(\aleph_1, \lambda, V, V[G_{\mathbb{R}}])$$ 
(moreover, $\mathbb{P}(\aleph_1, \lambda)^V= \mathbb{P}(\aleph_1, \lambda)^N$ and $\mathbb{Q}^*(\aleph_1, \lambda, V, V[G_{\mathbb{R}}])= \mathbb{Q}^*(\aleph_1, \lambda, N, N[G_{\mathbb{R}}])).$ Recall that $$\mathbb{S}(\aleph_1, \lambda, V, V[G_{\mathbb{R}}], G_{\mathbb{M}})= (\mathbb{P}(\aleph_1, \lambda)^V\times \mathbb{Q}^*(\aleph_1, \lambda, V, V[G_{\mathbb{R}}]))/ G_\mathbb{M},$$ so by forcing with $\mathbb{S}(\aleph_1, \lambda, V, V[G_{\mathbb{R}}], G_{\mathbb{M}})$  over $V[G]$ we obtain a model 
$V[G_{\mathbb{R}}][G_{\mathbb{P}}\times G_{\mathbb{Q}^*}]$ with $G_{\mathbb{P}}\times G_{\mathbb{Q}^*}$ generic for 
$\mathbb{P}(\aleph_1, \lambda)^V\times \mathbb{Q}^*(\aleph_1, \lambda, V, V[G_{\mathbb{R}}])$ over $V[G_{\mathbb{R}}]$ and such that $G_{\mathbb{M}}$ is the projection of 
$G_{\mathbb{P}}\times G_{\mathbb{Q}^*}$ to $\mathbb{M}.$\\

If $\mathbb{P}:= \mathbb{P}(\aleph_1, \lambda)^V,$ then $\mathbb{P}$ is $\kappa$-c.c. in $V[G_{\mathbb{R}}]$ (Lemma \ref{prova} (\ref{zio})), hence $j\restr \mathbb{P}$ is a complete embedding of $\mathbb{P}$ into $j(\mathbb{P}).$ Moreover, $\mathbb{P}$ is isomorphic via $j\restr \mathbb{P}$ to 
$\mathbb{P}(\aleph_1, j''[\lambda])^N= \mathbb{P}(\aleph_1, j''[\lambda])^V.$ By forcing with $\mathbb{P}(\aleph_1, j(\lambda))^V\restr (j(\lambda)- j''[\lambda])$ over 
$V[H_{j(\mathbb{R})}]$ we get
a $j(\mathbb{P})$-generic filter $H_{j(\mathbb{P})}$ such that $j''[G_{\mathbb{P}}]\subseteq H_{j(\mathbb{P})}.$ Then $j$ lifts to an elementary embedding   
$$j: V[G_\mathbb{R}][G_{\mathbb{P}}]\to N[H_{j(\mathbb{R})}][H_{j(\mathbb{P})}].$$ 

Let $\mathbb{Q}^*:= \mathbb{Q}^*(\aleph_1, \lambda, V, V[G_{\mathbb{R}}]).$ By Remark \ref{key} and since 
$j(\mathbb{R})\restr \kappa= \mathbb{R},$ we have  
$j(\mathbb{R})\restr \kappa+1= \mathbb{R}\ast \dot{\mathbb{Q}}^*$ where $\dot{\mathbb{Q}}^*$ is an $\mathbb{R}$-name for 
$\mathbb{Q}^*(\aleph_1, j(L)(\kappa), V, V[G_{\mathbb{R}}]).$  
We chose $j$ so that $j(L)(\kappa)= \lambda,$ therefore 
forcing with $j(\mathbb{R})\restr \kappa+1$ over $V$ is the same as forcing with $\mathbb{R}$ followed by forcing with $\mathbb{Q}^*$ over $V[G_{\mathbb{R}}].$
It follows that, by the closure of $N,$ we have $j''[G_{\mathbb{Q}^*}]\in N[H_{j(\mathbb{R})}].$ By Lemma \ref{dirclosed}, $\mathbb{Q}^*$ is $\aleph_2$-directed closed in $V[G_\mathbb{R}],$ hence $j(\mathbb{Q}^*)$ is $\aleph_2$-directed closed in $N[H_{j(\mathbb{R})}].$ Moreover, the filter $H_{j(\mathbb{R})}$ collapses $\lambda$ to have size $\aleph_1,$ thus $j''[G_{\mathbb{Q}^*}]$ has size $\aleph_1$ 
in $V[H_{j(\mathbb{R})}].$ Therefore, we can find 
$t\leq j(q),$ for all $q\in G_{\mathbb{Q}^*}.$ We force over $V[G_{j(\mathbb{R})}]$ with $j(\mathbb{Q}^*)$ below $t$ to get a $j(\mathbb{Q}^*)$-generic filter 
$H_{j(\mathbb{Q}^*)}$ containing $j''[G_{\mathbb{Q}^*}].$ The filter $H_{j(\mathbb{P})}\times H_{j(\mathbb{Q}^*)}$ generates a filter $H_{j(\mathbb{M})}$ generic for 
 $j(\mathbb{M})$ over $N[H_{j(\mathbb{R})}].$\\

It remains to prove that $j''[G_{\mathbb{M}}]\subseteq H_{j(\mathbb{M})}:$ let $(p,q)$ be a condition of $G_{\mathbb{M}},$ there are 
$\bar{p}\in G_{\mathbb{P}}$ and $(0,\bar{q})\in G_{\mathbb{Q}^*}$ such that $(\bar{p}, \bar{q})\leq (p,q).$ We have $j(\bar{p})\in H_{j(\mathbb{P})}$ and 
$(0, j(\bar{q}))\in H_{j(\mathbb{Q}^*)},$ hence $(j(\bar{p}), 0)$ and $(0, j(\bar{q}))$ are both in $H_{j(\mathbb{M})}.$ The condition  
$j(\bar{p}, \bar{q})$ is the greatest lower bound\footnote{
$j(\bar{p}, \bar{q})= (j(\bar{p}), j(\bar{q}))$ is clearly a lower bound. Suppose that $(p_1, q_1)$ is also a lower bound, then by definition 
$p_1\leq j(\bar{p})$ and $p_1\restr \alpha\force q_1(\alpha)\leq j(\bar{q})(\alpha),$ for every $\alpha.$ That is $(p_1, q_1)\leq (j(\bar{p}), j(\bar{q})).$ }
of $(j(\bar{p}),0)$ and $(0, j(\bar{q}))$; it follows that $j(\bar{p}, \bar{q})\in H_{j(\mathbb{M})}.$ We also have $j(\bar{p}, \bar{q})\leq j(p,q),$ hence $j(p,q)\in H_{j(\mathbb{M})}$ as required. Therefore, $j$ lifts to an elementary embedding $$j: V[G_{\mathbb{R}}][G_{\mathbb{M}}]\to N[H_{j(\mathbb{R})}][H_{j(\mathbb{M})}].$$ \end{proof} 
 
Rename $j^*$ by $j.$ We define $\mathscr{N}_1:=N[G]$ and $\mathscr{N}_2:=N[H_{j(\mathbb{R})}][H_{j(\mathbb{M})}].$ 
In $\mathscr{N}_2,$ $j(F)$ is a $(j(\kappa), j(\mu))$-tree and $j(D)$ is a $j(F)$-level sequence. By the closure of $N,$ the tree $F$ and the $F$-level sequence $D$ are in $\mathscr{N}_1,$ and there is no ineffable branch for $D$ in $\mathscr{N}_1.$


\begin{claim} In $\mathscr{N}_2,$ there is an ineffable branch $b$ for $D.$
\end{claim}

\begin{proof} Let $a:= j''[\mu],$ clearly $a\in [j(\mu)]^{<j(\kappa)}.$ Consider $f:= j(D)(a),$ let $b:\mu\to 2$ be the function defined by $b(\alpha):= f(j(\alpha)),$ we show that $b$ is an ineffable branch for $D.$ 
Assume for a contradiction that for some club 
$C\subseteq [\mu]^{<\vert \kappa\vert }\cap \mathscr{N}_1$ we have $b\restr X\neq D(X),$ for all $X\in C.$ Then by elementarity, $$f\restr X\neq j(D)(X),$$ for all $X\in j(C).$ 
But $a\in j(C)$ and $f= j(D)(a),$ so we have a contradiction. \end{proof}


Since there is no ineffable branch for $D$ in $\mathscr{N}_1,$ we get a contradiction with the following claim. 

\begin{claim} $b\in \mathscr{N}_1.$
\end{claim}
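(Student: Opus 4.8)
The point of the claim is to contradict the fact, already established, that $D$ has no ineffable branch in $\mathscr{N}_1=N[G]$. So I would first record why $b\in\mathscr{N}_1$ suffices: since the lift was arranged with $j''[G]\subseteq H$ we have $\mathscr{N}_1\subseteq\mathscr{N}_2$, and once $b\in\mathscr{N}_1$ the set $S:=\{X\in([\mu]^{<\kappa})^{\mathscr{N}_1}:\ b\restr X=D(X)\}$ lies in $\mathscr{N}_1$; being stationary in $\mathscr{N}_2$ it meets every club of $([\mu]^{<\kappa})^{\mathscr{N}_1}$ lying in $\mathscr{N}_1$, hence is stationary in $\mathscr{N}_1$, so $b$ would be an ineffable branch for $D$ there, completing Proof of 3 and the theorem. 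The whole task thus reduces to showing that the passage from $\mathscr{N}_1$ to $\mathscr{N}_2$ adds no cofinal branch to the $(\aleph_2,\mu)$-tree $F$ (recall $\kappa=\aleph_2$ in $\mathscr{N}_1$).

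The plan is to analyse the quotient forcing taking $\mathscr{N}_1=N[G_{\mathbb{R}}][G_{\mathbb{M}}]$ to $\mathscr{N}_2=N[H_{j(\mathbb{R})}][H_{j(\mathbb{M})}]$ and to split it into a \emph{closed} factor and an $\Add(\aleph_1,\cdot)$ factor. Using Remark \ref{key}, Lemma \ref{nuovo2}, Lemma \ref{proiezione} and Remark \ref{Brionia1}, the tail $j(\mathbb{R})/G_{\mathbb{R}}$ is a projection of a $\mathbb{P}(\aleph_0,j(\kappa))$-part times a $\sigma$-closed $\mathbb{U}$-part, and $j(\mathbb{M})/G_{\mathbb{M}}$ is a projection of a $\mathbb{P}(\aleph_1,j(\lambda))^N$-part times an $\aleph_2$-directed-closed $\mathbb{Q}^*$-part (Lemma \ref{dirclosed}). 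I would collect the two closed factors into a single $\sigma$-closed forcing $\mathbb{C}$, and the two Cohen factors into a single $\mathbb{P}\subseteq\Add(\aleph_1,\cdot)$ that is closed under restrictions; note $\mathbb{P}(\aleph_0,\cdot)\subseteq\Add(\aleph_1,\cdot)$ since finite partial functions are countable, so the whole Cohen factor is genuinely of $\Add(\aleph_1,\cdot)$-type.

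Since the $\Add(\aleph_1,\cdot)$-factor is $\kappa$-Knaster over the appropriate base (the analogue of Lemma \ref{prova} (\ref{zio})), forcing it first preserves $\kappa=\aleph_2$, so $F$ stays an $(\aleph_2,\mu)$-tree; I would therefore present $\mathscr{N}_2$ as $\mathscr{N}_1[\mathbb{P}\text{-generic}][\mathbb{C}\text{-generic}]$ and peel the layers from the top. The closed layer $\mathbb{C}$ is $\sigma$-closed and $2^{\aleph_0}=\aleph_2$, so by the First Preservation Theorem \ref{closed} (with $\eta=\aleph_0$) it adds no cofinal branch to $F$; thus $b\in\mathscr{N}_1[\mathbb{P}\text{-generic}]$. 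The Cohen layer $\mathbb{P}$ is $\subseteq\Add(\aleph_1,\cdot)$ and, using $\gamma^{<\aleph_1}<\kappa'$ together with the $\kappa'$-covering property supplied by the collapse, the Second Preservation Theorem \ref{secondo} applies and shows it adds no cofinal branch to $F$ either; hence $b\in\mathscr{N}_1$, the desired contradiction.

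The step I expect to be the main obstacle is verifying hypothesis (3), the covering property, in the application of Theorem \ref{secondo}. The trouble is that $\mathbb{M}$ is only $\lambda$-c.c., so $(N[G_{\mathbb{R}}],N[G])$ has merely $\lambda$-covering and not $\kappa$-covering, which blocks a naive application with $\kappa'=\kappa$. The remedy is to exploit the internal factorisation of $\mathbb{M}$ itself (Lemma \ref{proiezione}): its $\aleph_2$-closed part $\mathbb{Q}^*$ should be absorbed into $\mathbb{C}$ and its $\kappa$-Knaster part $\mathbb{P}(\aleph_1,\lambda)^V$ into $\mathbb{P}$, so that the effective ground model over which $\mathbb{P}$ is forced has the required covering with $\kappa'$ an $N$-inaccessible collapsed to have size $\aleph_2$ (as $\lambda$ was in Proof of 2, via Corollary \ref{primo} and Proposition \ref{collapse1}). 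Lining up simultaneously the correct base model, the cardinal $\kappa'$, and condition (4) is the delicate bookkeeping at the heart of the argument, and is where I would concentrate the technical work.
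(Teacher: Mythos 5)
Your reduction is sound in outline (peel a closed block and a Cohen block, using the two preservation theorems, with covering as the delicate hypothesis), and your final Cohen step would in fact go through: the combined Cohen part is a restriction-closed subset of $\Add(\aleph_1,\tau)^N$, and the pair $(N,\mathscr{N}_1)$ does have the $\kappa$-covering property, so one application of Theorem \ref{secondo} at $n=1$ over $\mathscr{N}_1$ (where $F$ is still an $(\aleph_2,\mu)$-tree) is legitimate. The fatal gap is the other step: your claim that the collected closed factor $\mathbb{C}$ is $\sigma$-closed over $\mathscr{N}_1[\mathbb{P}\textrm{-generic}]$ is false. The block $\mathbb{P}$ contains the finite-condition part $\mathbb{P}(\aleph_0, j(\kappa))\restr(j(\kappa)-\kappa)$, which adds reals, and adding reals destroys the $\sigma$-closure of the $\mathbb{U}$-part of $\mathbb{C}$: new descending $\omega$-chains appear with no lower bounds. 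The First Preservation Theorem genuinely needs closure in the model over which one forces (its proof builds a binary tree of conditions, taking lower bounds of sequences constructed there), and no Easton-style salvage is available, since your Cohen block is not $\aleph_1$-c.c.\ (the $\mathbb{P}(\aleph_1,\cdot)$ part has antichains of size $\aleph_1$) and your closed block is not $\aleph_2$-closed (the $\mathbb{U}$-part is only $\sigma$-closed). The paper is careful at exactly this point: $\mathbb{U}_0$ is peeled over $N[H_{\kappa+1}][H_{j(\mathbb{P})}]$, which is permissible only because the countable-condition forcing $j(\mathbb{P})$ adds no new $\omega$-sequences, and the real-adding $\mathbb{P}_0$ is peeled strictly afterwards. (A second, structural, problem: the closed factors do not even form a single poset over one model --- $\mathbb{U}_0$ of Lemma \ref{stanca} is defined only in $N[H_{\kappa+1}]=N[G_{\mathbb{R}}][G_{\mathbb{Q}^*}]$, and $G_{\mathbb{Q}^*}$ is itself generic over $\mathscr{N}_1$.)

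Nor can you escape by reversing the two blocks (all closed factors first, Cohen on top): once $\mathbb{U}_0$ has collapsed $\kappa$ to $\aleph_1$, the tree is only an $(\aleph_1,\mu)$-tree, and then the combined Cohen block can be handled by Theorem \ref{secondo} neither at $n=1$ (which requires an $(\aleph_2,\mu)$-tree) nor at $n=0$ (which requires conditions of size $<\aleph_0$, whereas the $\mathbb{P}(\aleph_1,\cdot)$ part has countable conditions). These two obstructions together show that no two-block ordering can work, and that the paper's five-step alternation is forced rather than bookkeeping: $\mathbb{S}$ ($\sigma$-closed over $\mathscr{N}_1$), then $\mathbb{P}_{tail}$ (Theorem \ref{secondo} at $n=1$, while the tree is at $\aleph_2$), then $\mathbb{U}_0$ (still $\sigma$-closed since only countable-condition forcing intervened), then $\mathbb{P}_0$ (Theorem \ref{secondo} at $n=0$, now that the tree is at $\aleph_1$), then the final $\sigma$-closed quotient. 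A smaller inaccuracy in your write-up points the same way: Knaster-ness of the Cohen part does not make ``$F$ stay an $(\aleph_2,\mu)$-tree'' --- non-distributive forcing adds new sets $X\in[\mu]^{<\aleph_2}$ at which $F$ has no level, so $F$ literally stops being a tree, and one needs the paper's covering-tree device ($F^*$ with $\lev_X(F)\subseteq\lev_X(F^*)$ for cofinally many $X$) at each chain-condition step.
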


\begin{proof} Assume towards a contradiction that $b\notin \mathscr{N}_1.$ By Lemma \ref{prova} (\ref{saggezza}) and Lemma \ref{provaa} (\ref{S}), the poset 
$\mathbb{S}:=\mathbb{S}(\aleph_1, \lambda, N, N[G_{\mathbb{R}}], G_{\mathbb{M}})$ is 
$\sigma$-closed in $\mathscr{N}_1.$ In $\mathscr{N}_1,$ we have $\kappa= \aleph_2= 2^{\aleph_0},$ hence $F$ is a $(\aleph_2,\mu)$-tree and we can apply the First Preservation Theorem to $\mathbb{S},$ thus 
$$b\notin N[G_{\mathbb{R}}][G_{\mathbb{P}}\times G_{\mathbb{Q}^*}]$$
(we defined $G_{\mathbb{P}}\times G_{\mathbb{Q}^*}$ in Claim \ref{firstclaim} as a generic filter for $\mathbb{S}$). $\mathbb{S}$ is $<\aleph_2$-distributive in $\mathscr{N}_1$ (this is a standard application of the Easton's Lemma, see \cite[Lemma 3.20]{CummingsForeman} for more details) so $F$ is still an $(\aleph_2, \mu)$-tree after forcing with $\mathbb{S}.$ Now, the forcing that takes us from $\mathbb{P}$ to $j(\mathbb{P})$ is 
$$\mathbb{P}_{tail}:= \mathbb{P}(\aleph_1, j(\lambda))^N\restr (j(\lambda)- \lambda)).$$ 
The pair $(N, N[G_{\mathbb{R}}][G_{\mathbb{P}}\times G_{\mathbb{Q}^*}])$ has the $\kappa$-covering property, because $\mathbb{S}$ is $<\aleph_2$-distributive and $\mathbb{R}$ is $\kappa$-c.c. Since $\kappa$ is inaccessible in $N,$ we can apply the Second Preservation Theorem to $\mathbb{P}_{tail},$ so $$b\notin N[G_{\mathbb{R}}][G_{\mathbb{Q}^*}][H_{j(\mathbb{P})}].$$


We already observed in the proof of the first claim that forcing with $j(\mathbb{R})\restr \kappa+1$ over $V$ is the same as forcing with 
$\mathbb{R}$ followed by forcing with $\mathbb{Q}^*$ over $V[G_{\mathbb{R}}].$ So, if $H_{\kappa+1}$ is the projection of $H_{j(\mathbb{R})}$ to $j(\mathbb{R})\restr \kappa+1,$ then $N[G_{\mathbb{R}}][G_{\mathbb{Q}^*}]= N[H_{\kappa+1}].$ This means that we proved $$b\notin N[H_{\kappa+1}][H_{j(\mathbb{P})}].$$

Consider $\mathbb{R}_{tail}:= j(\mathbb{R})/ H_{\kappa+1},$ by Lemma \ref{stanca}, $\mathbb{R}_{tail}$ is a projection of $\mathbb{P}_0\times \mathbb{U}_{0},$ 
where $\mathbb{P}_0:= \mathbb{P}(\aleph_0, j(\kappa))^N\restr (j(\kappa)- \kappa)$ and 
$\mathbb{U}_{0}:= \mathbb{U}_{\kappa+1}(\aleph_0, j(\kappa), j(L), H_{\kappa+1} ),$ moreover, $\mathbb{U}_0$ is $\sigma$-closed in $N[H_{\kappa+1}].$ 
Forcing with $j(\mathbb{P})$ does not add countable sequences to $N[H_{\kappa+1}]$ (the proof is analogous to the proof of Lemma \ref{prova} (\ref{saggezza}))
hence $\mathbb{U}_{0}$ is still $\sigma$-closed in $N[H_{\kappa+1}][H_{ j(\mathbb{P})}].$\\

We want to apply the First Preservation Theorem to $\mathbb{U}_0,$ so consider the following facts. In $N[H_{\kappa+1}][H_{j(\mathbb{P})}],$ we have $2^{\aleph_0}= j(\kappa)>\kappa=\aleph_2$ but now $F$ is not exactly an $(\aleph_2, \mu)$-tree because $N[H_{\kappa+1}][H_{j(\mathbb{P})}]$ was obtained by forcing with $\mathbb{P}_{tail}$ over 
$N[G_{\mathbb{R}}][G_{\mathbb{P}}\times G_{\mathbb{Q}^*}]$ and $\mathbb{P}_{tail}$ is not $<\aleph_2$-distributive. Yet, $\mathbb{P}_{tail}$ is $\aleph_2$-c.c. in $N[G_{\mathbb{R}}][G_{\mathbb{P}}\times G_{\mathbb{Q}^*}]$ (the proof is analogous to the proof of Lemma \ref{prova} (\ref{zio}), see \cite{CummingsForeman} for more details), so $F$ "covers" an $(\aleph_2,\mu)$-tree, namely
there is in $N[H_{\kappa+1}][H_{j(\mathbb{P})}]$ a $(\aleph_2, \mu)$-tree $F^*$ such that for cofinally many $X\in [\mu]^{<\aleph_2},$ $\lev_X(F)\subseteq \lev_X(F^*).$  
Let $N[H_{\mathbb{U}_0}][H_{j(\mathbb{P})}]$ be the generic extension obtained by forcing with $\mathbb{U}_0$ over $N[H_{\kappa+1}][H_{j(\mathbb{P})}].$ 
If $b\in N[H_{\mathbb{U}_0}][H_{j(\mathbb{P})}],$ then $b$
provides a cofinal branch for $F^*$ in that model, hence by the First Preservation Theorem, $b\in N[H_{\kappa+1}][H_{j(\mathbb{P})}].$ But we already proved that 
$b$ does not belong to that model, so we must have
$$b\notin N[H_{\mathbb{U}_0}][H_{j(\mathbb{P})}].$$


The filter $H_{\mathbb{U}_0}$ collapses $\kappa$ (hence $\aleph_2$) to have size $\aleph_1,$ so now $F^*$ is an $(\aleph_1, \mu)$-tree in $W:= N[H_{\mathbb{U}_0}][H_{j(\mathbb{P})}].$ 
The model $N[H_{j(\mathbb{R})}][H_{j(\mathbb{P})}]$ is the result of forcing with $\mathbb{P}_0$ over $W.$ Observe that $\mathbb{P}_0$ and $(W, W)$ satisfy all the hypothesis of the Second Preservation Theorem: indeed,  
$\mathbb{P}_0\subseteq \Add(\aleph_0, j(\kappa))^W$ and in $W,$ we have $\gamma^{<\omega}<\omega_1$ for every cardinal $\gamma<\omega_1.$ 
Therefore, $$b\notin N[H_{j(\mathbb{R})}][H_{j(\mathbb{P})}].$$

$\mathbb{P}_0$ is c.c.c. in $W$ (the proof is analogous to the proof of Lemma \ref{prova} (\ref{zio})), so $F^*$ covers an $(\aleph_1,\mu)$-tree in 
$N[H_{j(\mathbb{R})}][H_{j(\mathbb{P})}],$ we rename it $F^*.$
$\mathscr{N}_2$ is the result of forcing with 
$$\mathbb{Q}(\aleph_1, j(\lambda), N, N[H_{j(\mathbb{R})}], H_{j(\mathbb{P})})= \mathbb{M}(\aleph_1, j(\lambda), N, N[H_{j(\mathbb{R})}])/ H_{j(\mathbb{P})}$$ over $N[H_{j(\mathbb{R})}][H_{j(\mathbb{P})}]$ and by Lemma \ref{provaa} (\ref{stracotta1}), that poset is $\sigma$-closed in $N[H_{j(\mathbb{R})}][H_{j(\mathbb{P})}].$ The function $b,$ which is in $\mathscr{N}_2,$ provides a cofinal branch for $F^*$ in $\mathscr{N}_2.$ It follows from the First Preservation Theorem that $b\in N[H_{j(\mathbb{R})}][H_{j(\mathbb{P})}],$ but we proved that $b$ does not belong to that model, so we have a contradiction. 
 \end{proof}   

This completes the proof of (3).

\section{Conclusion}

Cummings and Foreman \cite{CummingsForeman} defined a model of the tree property for every $\aleph_n$ ($n\geq 2$), starting with an infinite sequence of supercompact cardinals 
$\langle \kappa_n \rangle_{n<\omega}.$ Their forcing $\mathbb{R}_{\omega}$ is basically an iteration with length $\omega$ of our main forcing. We conjecture that $\mathbb{R}_{\omega}$ produces a model in which every $\aleph_n$ ($n\geq 2$) satisfies even the super tree property. Yet, if we want to prove that stronger result, we have to deal with the following fact: every $\kappa_n$-tree in the Cummings-Foreman model appears in some intermedium stage, that is after forcing with $\mathbb{R}_{\omega}\restr m$ for some 
$m;$ in the case of a $(\kappa_n, \mu)$-tree, that is not necessarily true.

\section{Acknowledgements}

I would like to thank my advisor Boban Veli\v{c}kovi\'{c} for suggesting to investigate the strong and the super tree property at small cardinals. I owe to him the idea to reformulate the definition of these properties in terms of $(\kappa, \lambda)$-trees, a very useful indication that helped me to prove the results presented in this paper. He gave me many other suggestions for which I am very grateful. 

\bigskip

\bigskip

\bigskip

\end{document}